\definecolor{Green}{RGB}{30, 150, 30}
\newcommand{\sminus}{\mathbin{\setminus}}
\newtheorem{theorem}{Theorem}[section]
\newtheorem{lemma}[theorem]{Lemma}
\newtheorem{proposition}[theorem]{Proposition}
\newtheorem{corollary}[theorem]{Corollary}
\theoremstyle{definition}
\newtheorem{definition}[theorem]{Definition}
\newtheorem{claim}{Claim}
\newtheorem{example}[theorem]{Example}
\newcommand{\sep}{\operatorname{Sep}}
\newcommand{\DW}{\operatorname{DW}(S)} 
\newcommand{\FO}{\mathcal{F}_0(\Sigma)} 
\newcommand{\F}{\mathcal{F}(\Sigma)} 
\newcommand{\C}{\mathcal{C}}
\newcommand{\ksep}{\mathcal{K}}
\newcommand{\mcg}{\operatorname{MCG}}
\newcommand{\X}{\mathfrak{X}}
\newcommand{\G}{\mathcal{G}}
\newcommand{\diam}{\text{diam}}
\newcommand{\mc}[1]{\mathcal{#1}}
\newcommand{\mf}[1]{\mathfrak{#1}}
\newcommand*\Wit{\operatorname{Wit}}
\newcommand{\hierarchical}{hierarchical}
\newcommand{\Push}{\operatorname{Push}}
\newcommand{\tsh}[1]{\left\{\kern-.7ex\left\{#1\right\}\kern-.7ex\right\}}
\newcommand{\Tsh}[2]{\tsh{#2}_{#1}}
\newcommand{\ignore}[2]{\Tsh{#2}{#1}}
\def\subsection{\@startsection{subsection}{2}
  \z@{.5\linespacing\@plus.7\linespacing}{.3\linespacing}
  {\normalfont\scshape}}
\title{The (non)-relative hyperbolicity of the separating curve graph}
\author{Jacob Russell}
\address{CUNY Graduate Center, 365 Fifth Avenue, New York, NY 10016, USA}
\email{jrussellmadonia@gradcenter.cuny.edu}
\author{Kate M. Vokes}
\address{IHES, 35 route de Chartres, 91440 Bures-sur-Yvette, France}
\email{vokes@ihes.fr}
\thanks{Date: 2 October 2019}
\begin{document}

\begin{abstract}
We prove that  the separating curve graph of a connected, compact, orientable surface with genus at least 3 and a single boundary component is not relatively hyperbolic. This completes the classification of when the separating curve graph is hyperbolic and relatively hyperbolic initiated by previous {works} of the authors. 
\end{abstract}

\maketitle

\section{Introduction}

For a connected, compact, orientable surface $S=S_{g,b}$ with genus $g$ and $b$ boundary components, the separating curve graph, $\sep(S)$, is the metric graph whose vertices are all isotopy classes of separating curves, with edges of length~1 corresponding to disjointness.
The separating curve graph  arises naturally in the study of the Johnson kernel of the mapping class group~\cite{brendlem,Kida_Johnson_Kernel}, the coarse geometry of the Weil--Petersson metric on Teichm\"uller space~\cite{Brock_Masur_WP_Low_complexity,bowditch_wprigidity,sultanthesis}, and the algebraic topology of the moduli space of a surface~\cite{looijenga}.

When  $S$ has genus $0$, all curves on the surface are separating and the separating curve graph is the same as the famous curve graph, $\C(S)$, which is defined similarly using all curves on~$S$ instead of just the separating curves. In this case, $\sep(S) = \C(S)$ is a Gromov hyperbolic metric space by a celebrated theorem of Masur and Minsky~\cite{mm1}. While the curve graph is hyperbolic for all surfaces, it has long been understood that the  separating curve graph is not in general hyperbolic (see~\cite[Exercise~2.42]{schleimer_notes}).

The second author illuminated the coarse geometry of the separating curve graph by showing it is a hierarchically hyperbolic space in all cases where $\sep(S)$ is non-empty \cite{vokessep}.
Hierarchical hyperbolicity is a notion of non\hyp{}positive curvature introduced by Behrstock, Hagen, and Sisto to generalize Masur and Minsky's subsurface projection machinery for the mapping class group~\cite{hhs1,hhs2,mm2}.
Every hierarchically hyperbolic space is equipped with a family of projection maps to Gromov hyperbolic spaces. These projection maps satisfy a list of axioms that allows for the geometry of the space to be recovered from the images of these projections.  The simplest examples of hierarchically hyperbolic spaces are hyperbolic spaces, where the projection map can be taken to be the identity map from the space to itself. The projection maps for the mapping class group are the subsurface projections, to curve graphs of all subsurfaces, defined by Masur and Minsky in~\cite{mm2}.  
The hierarchically hyperbolic structure  on $\sep(S)$  constructed by the second author  also uses Masur and Minsky's subsurface projections, this time to curve graphs of only certain subsurfaces.

As a consequence of this hierarchically hyperbolic structure, the second author deduced that the separating curve graph is hyperbolic when the surface has at least three boundary components \cite{vokessep}.
For surfaces with genus at least~3 and zero or two boundary components, the first author used the hierarchically hyperbolic structure to show the separating curve graph is relatively hyperbolic \cite{russell}.
Like a relatively hyperbolic group, a relatively hyperbolic space is hyperbolic outside of a collection of isolated peripheral subsets (see~\cite{SistoMetRel} for several equivalent definitions).
While hierarchical hyperbolicity is a generalization of hyperbolic and relatively hyperbolic spaces, these results demonstrate that it can be advantageous to first prove a space is hierarchically hyperbolic and then use the tools provided by this to show (relative) hyperbolicity.

Our new addition to this theory is the resolution of the final case, proving the separating curve graph is not relatively hyperbolic for surfaces with one boundary component.

\begin{theorem}\label{intro_theorem:sep(S)_is_thick}
	The separating curve graph of a surface with one boundary component and genus at least 3 is not hyperbolic or relatively hyperbolic.
\end{theorem}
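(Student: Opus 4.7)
The plan is to prove that $\sep(S_{g,1})$ is thick in the sense of Behrstock, Drutu, and Mosher; a thick metric space is not relatively hyperbolic with respect to any family of proper subsets, which gives the main conclusion of the theorem. Non\hyp{}hyperbolicity comes along for free since a thick space contains two\hyp{}dimensional quasi\hyp{}flats. The new ingredient compared with the zero\hyp{} and two\hyp{}boundary\hyp{}component cases treated by the first author is an unbounded intersection pattern among the natural peripheral candidates of $\sep(S)$.

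First, I would exhibit a family $\{P_\alpha\}$ of coarse product regions in $\sep(S)$. For each separating multicurve $\alpha$ of $S$ whose complement $S \sminus \alpha$ has at least two components that each carry separating curves of $S$, let $P_\alpha$ be the subgraph of $\sep(S)$ spanned by separating curves of $S$ disjoint from $\alpha$. Then $P_\alpha$ is coarsely a product of two unbounded curve graphs, so it contains two\hyp{}dimensional quasi\hyp{}flats and is thick of order $0$. When $g \geq 3$ such $\alpha$ exist in abundance, and the family $\{P_\alpha\}$ coarsely covers $\sep(S)$.

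The crux is then to chain any two such regions $P_\alpha, P_\beta$ by a finite sequence $P_\alpha = P_{\alpha_0}, P_{\alpha_1}, \ldots, P_{\alpha_n} = P_\beta$ of product regions with pairwise coarsely unbounded intersections. I would construct the intermediate $\alpha_i$ so that $P_{\alpha_i} \cap P_{\alpha_{i+1}}$ contains the curve graph of a \fillable{} subsurface in the joint complement $S \sminus (\alpha_i \cup \alpha_{i+1})$, which is unbounded. The single boundary component is essential here: it provides just the asymmetry needed so that, unlike in the $b = 0$ and $b = 2$ cases, one can always route such a chain so that consecutive joint complements support unbounded curve graphs. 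The main obstacle, I expect, is the explicit combinatorial construction of these chains in topologically constrained cases; this will require a case analysis of the topological types of separating curves in $S_{g,1}$, organized using the hierarchically hyperbolic structure of the second author, together with a verification that the joint complements along the chain never collapse to a surface with only finitely many separating curves.
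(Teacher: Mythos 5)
Your overall strategy (prove thickness, conclude non-relative-hyperbolicity) is the same as the paper's, and your identification of the thick-of-order-$0$ pieces as product regions associated to multicurves whose complement supports a product structure is essentially correct: the paper works in the quasi-isometric model $\ksep(S)$ and takes product regions $P(m)$ for multicurves $m$ whose complement contains a pair of disjoint witnesses (the ``witness'' condition from the hierarchical structure, Proposition~\ref{proposition: sep holes}, is the precise version of your ``carry separating curves'' condition). From there, however, the proposal diverges from what can actually be carried out, and the divergence is exactly at the step you flag as the ``crux.''

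You propose to connect \emph{any} two product regions $P_\alpha, P_\beta$ by a chain of product regions with unbounded coarse pairwise intersections, i.e., to prove thickness of order~$1$. That single-level chaining is not available: the paper proves that the product regions organize themselves into the graph $\DW$ of disjoint-witness multicurves, and the connected components of $\DW$ are precisely the fibers of the capping map $F \colon \C(S_{g,1}) \to \C(S_{g,0})$ (Proposition~\ref{prop:fibers_are_connected_components}). Two product regions $P(m)$, $P(n)$ with $F(m) \neq F(n)$ cannot be linked directly by successively adding/removing curves, which is the mechanism by which product regions share infinite-diameter intersections. This forces a second layer: the unions $\mc{X}(\mu) = \bigcup_{m \in F^{-1}(\mu)} P(m)$ are the thick-of-order-$1$ pieces, and the theorem is thickness of order at most $2$, not $1$. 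Your proposal contains no analogue of this intermediate stratification, and the phrase ``the single boundary component provides just the asymmetry needed'' does not identify the actual mechanism, which is very concrete: the capping map exists because $b=1$, and the point-pushing subgroup $\Push(S_{g,1})$ (the kernel of $\mcg(S_{g,1}) \to \mcg(S_{g,0})$) contains a pseudo-Anosov $\phi$. Since $\phi$ fixes each fiber $F^{-1}(\mu)$, it preserves each $\mc{X}(\mu)$, and the orbit $\langle \phi \rangle \cdot x$ of a point $x$ near both $\mc{X}(\mu)$ and $\mc{X}(\mu')$ gives an unbounded set in their coarse intersection (Corollary~\ref{corollary:pA_are_undistorted}, Proposition~\ref{prop:intersection_thick_of_order_2}). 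Your suggested mechanism for unbounded coarse intersection --- ``the curve graph of a fillable subsurface in the joint complement $S \sminus (\alpha_i \cup \alpha_{i+1})$'' --- is not how the chaining works and, for the pairs that actually need to be chained (different $F$-fibers), there is no such fillable complementary subsurface to exploit. In short, the central combinatorial step is left as an acknowledged unknown in your sketch, and the one-level strategy it gestures at would not close.

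Two smaller points. First, it is not quite right that every thick space contains two-dimensional quasi-flats; non-hyperbolicity here follows simply because thick of any order precludes being hyperbolic (the empty peripheral family), or, more concretely in this case, because the product regions themselves are quasi-isometrically embedded products of two unbounded spaces (Lemma~\ref{lem:product_regions_are_thick}). Second, the classification of which multicurves give useful product regions is not ``at least two components carrying separating curves'' but rather ``complement contains two disjoint witnesses,'' which for $S_{g,1}$ is a short explicit list (Proposition~\ref{proposition: disjoint witnesses}); this distinction matters because the chaining argument depends on knowing that list exactly.
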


We establish Theorem~\ref{intro_theorem:sep(S)_is_thick} by showing the separating curve graph is \emph{thick} when the surface has exactly one boundary component. Thickness was originally introduced by Behrstock, Dru\c{t}u and Mosher as an obstruction to a space being relatively hyperbolic, but has emerged as a powerful quasi-isometry invariant of metric spaces \cite{Behrstock_Drutu_Mosher_Thickness,Behrstock_Drutu_Short_Conjugators,BHS_RACG}. Thickness is defined inductively, with spaces that are products of two infinite diameter metric spaces being thick of order~0. A metric space  $X$ is thick of order at most~$n\geq 1$ if any two points in $X$ can be connected by a ``thick chain" of subsets that are thick of order at most~$n-1$. That is, for any $x,y\in X$, there exists a sequence of thick of order at most~$n-1$ subsets $P_1,\dots,P_k$, so that $x$ is in $P_1$, $y$ is in $P_k$, and the coarse intersection between $P_i$ and $P_{i+1}$ has infinite diameter for each $1\leq i \leq k-1$. If a space is thick of any order, then it cannot be relatively hyperbolic \cite[Corollary 7.9]{Behrstock_Drutu_Mosher_Thickness}.

We show that $\sep(S_{g,1})$ is thick of order at most~2. The thick of order~0 subsets are product regions that arise naturally from the hierarchically hyperbolic structure.  These product regions are in one-to-one correspondence with pairs of disjoint \emph{witnesses}, connected subsurfaces that intersect every separating curve on~$S_{g,1}$. The thick chains of these product regions can be understood by examining sequences of sequentially disjoint witnesses. Not every two points in $\sep(S_{g,1})$ can be joined by a thick chain of these product regions, so we form thick of order~1 subsets by taking unions of product regions that can be thickly chained together. The main difficulty is then understanding how these thick of order~1 subsets chain together. We resolve this by using the map from $S_{g,1}$ to the closed surface $S_{g,0}$ given by capping off the boundary component.
The fibers of this map illuminate the chaining of the thick of order~1 subsets we have constructed in~$\sep(S_{g,1})$.  This method is an adaption of the argument in \cite[Proposition~3]{Brock_Masur_WP_Low_complexity}, which exploits the capping map to prove the thickness of the pants graph of~$S_{2,1}$.

By combining the present work with previous works of the authors, we obtain a complete characterization of the (relative) hyperbolicity of the separating curve graph in Theorem~\ref{intro_theorem:geometry of sep(S)} below. The hyperbolicity of $\sep(S)$ was previously understood for $S= S_{2,0}$~\cite{sultan_genus_two}, and in the genus~0 case, when $\sep(S) = \C(S)$ \cite{mm1}. Theorem~\ref{intro_theorem:geometry of sep(S)} covers all possible cases, since $S_{g,b}$ contains no separating curves when $2g+b <4$. As we discuss in Section~\ref{section: sep connectedness}, when $(g,b) \in \{(0,4), (1,2), (2,0), (2,1)\}$, $\sep(S_{g,b})$ is non-empty, but the edge relation of disjointness  produces a graph that is not connected. To achieve a connected graph in these exceptional cases, we modify the edge relation to put an edge between any two curves that intersect at most four times for $S_{1,2}$, $S_{2,0}$ and $S_{2,1}$, and at most twice for $S_{0,4}$. While the connectedness of $\sep(S)$ when $2g+b\ge 4$ is well known to experts, we could not find a proof in the literature covering all of the cases in Theorem~\ref{intro_theorem:geometry of sep(S)}. 
In Section \ref{section: sep connectedness}, we use a technique of Putman to provide a unified proof of connectedness  in all the cases where $\sep(S)$ is not equal to~$\C(S)$.

\begin{theorem}\label{intro_theorem:geometry of sep(S)}
	Let $S=S_{g,b}$ be so that $\sep(S)$ is non\hyp{}empty, that is,  $2g+b \geq 4$.
	\begin{enumerate}
		\item If $b \geq 3$ or $(g,b) \in \{(1,2), (2,0), (2,1)\}$, then $\sep(S)$ is hyperbolic \cite[Example~2.4]{vokessep}.
		\item  If $b=0$ and $g\geq 3$, then $\sep(S_{g,b})$ is relatively hyperbolic with peripherals quasi-isometric to $\C(S_{0,g+1})\times \C(S_{0,g+1})$ \cite[Theorem~6.8]{russell}.
		\item  If $b=2$ and $g\ge2$, then $\sep(S_{g,b})$ is relatively hyperbolic with peripherals quasi-isometric to $\C(S_{0,g+2})\times \C(S_{0,g+2})$ \cite[Theorem~6.8]{russell}.
		\item If $b=1$ and $g\geq 3$, then $\sep(S_{g,b})$ is thick of order at most 2. 
	\end{enumerate}
\end{theorem}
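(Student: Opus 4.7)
Parts (1)--(3) of the theorem simply restate previously published results, so the entire content to be proven is part (4): that $\sep(S_{g,1})$ is thick of order at most~$2$ when $g \geq 3$. My plan follows the sketch in the introduction and is modeled on the capping-map argument of \cite{Brock_Masur_WP_Low_complexity}.

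\emph{Order 0 pieces from the HHS structure.} For each pair $(U,V)$ of disjoint witnesses in $S_{g,1}$, that is, disjoint connected subsurfaces each intersecting every separating curve, the hierarchically hyperbolic structure of \cite{vokessep} provides a product region $P_{U,V} \subset \sep(S_{g,1})$ quasi-isometric to $\C(U) \times \C(V)$. For the witness pairs I will use, both factors will have infinite diameter, so each $P_{U,V}$ is thick of order~$0$. Two such product regions $P_{U_1,V_1}$ and $P_{U_2,V_2}$ inherit an infinite diameter coarse intersection whenever a witness from one pair is disjoint from a witness of the other: if $U_1$ is disjoint from $V_2$, then both product regions contain $P_{U_1,V_2}$, which has infinite diameter in $\sep(S_{g,1})$.

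\emph{Assembling order 1 pieces.} I build thick of order~$1$ subsets $Q_\mathcal{W}$ by taking, for a collection $\mathcal{W}$ of witnesses that is connected under the ``sequentially disjoint'' relation, the union of all $P_{U,V}$ with $U, V \in \mathcal{W}$ disjoint. Within such a $Q_\mathcal{W}$, any two separating curves are joined by a thick chain of order-$0$ product regions, so $Q_\mathcal{W}$ is thick of order at most~$1$. The combinatorics of sequentially disjoint witnesses on $S_{g,1}$ determines precisely which curves end up in a common $Q_\mathcal{W}$, and I will show that each separating curve lies in some $Q_\mathcal{W}$.

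\emph{Chaining order 1 pieces via the capping map.} The main obstacle is that two arbitrary separating curves $\alpha, \beta \in \sep(S_{g,1})$ need not lie in any common $Q_\mathcal{W}$: the presence of a single boundary component imposes rigid constraints on witness configurations that are absent in the closed case. To bridge distinct order-$1$ subsets I use the capping map $c\colon S_{g,1} \to S_{g,0}$ and the induced simplicial map $\bar c \colon \sep(S_{g,1}) \to \sep(S_{g,0})$. Using the relatively hyperbolic structure on $\sep(S_{g,0})$ supplied by part~(2), I connect $\bar c(\alpha)$ to $\bar c(\beta)$ by a path of separating curves in $S_{g,0}$, and then lift this path curve-by-curve to $\sep(S_{g,1})$, where each lift is obtained by choosing a placement of the capping disk in the complement of the downstairs curve. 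The key observation, mirroring the argument of \cite[Proposition~3]{Brock_Masur_WP_Low_complexity}, is that the set of lifts of a fixed curve in $S_{g,0}$ forms an infinite diameter subset of a single $Q_\mathcal{W}$, so consecutive order-$1$ subsets in the chain intersect coarsely in an infinite diameter set.

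Combining the three levels then gives that any two points of $\sep(S_{g,1})$ can be connected by a thick chain of $Q_\mathcal{W}$'s, establishing thickness of order at most~$2$ and hence non-relative-hyperbolicity via \cite[Corollary~7.9]{Behrstock_Drutu_Mosher_Thickness}. I expect the main technical difficulty to be verifying that the lifts chosen along the downstairs path can be arranged so that consecutive lifts actually lie in a common $Q_\mathcal{W}$, and that the infinite diameter of the set of lifts in $\sep(S_{g,0})$ translates into infinite diameter inside $\sep(S_{g,1})$; controlling the latter will require careful use of subsurface projections available from the hierarchically hyperbolic structure.
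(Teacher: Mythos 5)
Your order-$0$ and order-$1$ decomposition is essentially the paper's: the product regions $P_{U,V}$ for disjoint witness pairs are the thick-of-order-$0$ pieces, and the order-$1$ pieces are unions of product regions over a collection of witness pairs that can be "chained by disjointness"; the paper formalizes this via the graph $\DW$ of multicurves whose complement contains a pair of disjoint witnesses, and shows (Proposition~\ref{prop:fibers_are_connected_components}) that the connected components of $\DW$ are exactly the fibers of the capping map~$F$.

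The gap is in the chaining of the order-$1$ pieces, and it is a genuine one. Your order-$1$ pieces are indexed (implicitly, via the paper's identification) by the set $\FO$ of multicurves on $\Sigma = S_{g,0}$, each of which is a $(g+1)$-component multicurve cutting $\Sigma$ into two $(g+1)$-holed spheres. To verify the thick-chaining axiom you must produce, for \emph{any} two such $\mu,\mu'\in\FO$, a chain of order-$1$ pieces from $\mc X(\mu)$ to $\mc X(\mu')$ whose consecutive members have infinite-diameter coarse intersection. Your proposal routes through a path in $\sep(\Sigma)$ and lifts it, but vertices of $\sep(\Sigma)$ are single separating curves on $\Sigma$, which neither lie in $\FO$ nor canonically determine an element of $\FO$: a fixed separating curve of $\Sigma$ is compatible with many different $\mu$'s, and a fixed lift in $\sep(S_{g,1})$ is coarsely near many different $\mc X(\mu)$'s. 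Consequently a path $\gamma_0,\dots,\gamma_k$ in $\sep(\Sigma)$ with $\gamma_0 = \bar c(\alpha)$ does not return a chain whose first term is the prescribed $\mc X(\mu)$, and the ``key observation'' that the set of lifts of a fixed $\gamma_i$ lies in a single order-$1$ piece is not well-posed without specifying which one. The relative hyperbolicity of $\sep(\Sigma)$ you invoke plays no role here -- at most you would use connectedness of $\sep(\Sigma)$, and that is not the connectedness you actually need. What the paper does instead is build a graph $\F$ directly on $\FO$ with edges given by intersection number at most $4$, prove connectedness of $\F$ by Putman's trick applied to the Humphries generators of $\mcg(\Sigma)$ (Lemma~\ref{lem:DW0_is_connected}), and prove that adjacency in $\F$ yields infinite-diameter coarse intersection of the corresponding $\mc X(\mu)$'s by a surgery argument producing nearby points and then applying the orbit of a point-pushing pseudo-Anosov (Proposition~\ref{prop:intersection_thick_of_order_2}, together with Corollary~\ref{corollary:pA_are_undistorted}). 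You would need to replace your $\sep(\Sigma)$-path step with something along these lines: a connectedness statement for a graph whose vertices are the actual indices of the order-$1$ pieces, not the separating curves of~$\Sigma$.
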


After laying out some preliminaries in Section~\ref{section: preliminaries}, we prove connectedness of $\sep(S)$ for all surfaces with genus at least~1 in Section~\ref{section: sep connectedness}. In Section~\ref{section: hierarchies}, we detail the hierarchy structure on the separating curve graph established by the second author in~\cite{vokessep}. This entails describing a general class of graphs of multicurves from which we find a quasi-isometric model for the separating curve graph and a concrete description of the product regions. Finally, we show the separating curve graph of a surface with one boundary component is thick in Section~\ref{section: thickness}.

\section{Preliminaries} \label{section: preliminaries}

Throughout, we will consider connected, orientable, finite type surfaces. As the separating curve graph does not distinguish between a boundary component and a puncture, we may assume that all surfaces are compact.  Thus, each surface will be homeomorphic to some $S_{g,b}$ with genus $g$ and $b$ components. The \emph{complexity} of $S_{g,b}$ is $\xi(S_{g,b})= 3g-3 +b$.

By a \emph{curve} on a surface $S$ we mean an isotopy class of essential, non\hyp{}peripheral simple closed curves on~$S$. A \emph{subsurface} of $S$ will be an isotopy class of compact subsurfaces.
We assume all subsurfaces are \emph{essential}, that is, all boundary components of the subsurface are either boundary components of $S$ or essential, non\hyp{}peripheral curves.
We say curves and/or subsurfaces are \emph{disjoint} if they have disjoint representatives. A \emph{multicurve} of $S$ is a collection of pairwise disjoint, pairwise non-isotopic curves on~$S$. If a multicurve $\mu$ and a subsurface $Y$ are not disjoint,  we say $\mu$ \emph{intersects}~$Y$.  Abusing notation, if $\mu$ is a multicurve on~$S$, $S \sminus \mu$ will denote  the complement of a regular open neighborhood of~$\mu$.
Similarly, for a subsurface $Y$ of $S$, $S \sminus Y$ will denote the closure of the complement of~$Y$. A multicurve $\mu$ is \emph{separating} if $S \sminus \mu$ is disconnected. The \emph{intersection number} of two multicurves $\mu$ and $\nu$ on $S$ is denoted $i(\mu,\nu)$ and is the minimal number of intersections between two representatives of $\mu$ and~$\nu$. Given a subsurface $Y$ of $S$, we denote by $\partial_S Y$ the multicurve on $S$ composed of boundary curves of $Y$ that are not also boundary curves of~$S$.

The \emph{curve graph}, $\C(S)$, of a surface $S$ has a vertex for every curve on $S$ and an edge joining two vertices if they are disjoint. 
We make the standard modification for $S_{1,0}$, $S_{1,1}$ and $S_{0,4}$ by putting an edge between two vertices which intersect minimally.
The \emph{separating curve graph}, $\sep(S)$, is defined similarly using only the curves that are separating.
We give the precise edge relation for the separating curve graph in Section~\ref{section: sep connectedness}.
All graphs will be considered as metric spaces by declaring each edge to have length~1.

For every connected subsurface $Y$ of $S$ with $\C(Y)$ non-empty, Masur and Minsky defined a \emph{subsurface projection} map $\pi_Y \colon \C(S) \to 2^{\C(Y)}$. We recall a few properties and direct the reader to \cite[Section~2.3]{mm2} for full details.
For a set of curves $A$ on~$S$, we define $\pi_Y(A)=\bigcup_{\alpha \in A}\pi_Y(\alpha)$. If $\mu$ is a multicurve on~$S$, then $\pi_Y(\mu)$ is empty if $\mu$ is disjoint from $Y$ and is a non-empty subset of diameter at most~3 if $\mu$ intersects~$Y$. If $\mu$ and $\nu$ are two multicurves on~$S$ that both intersect a subsurface~$Y$, then we define $d_Y(\mu,\nu)=\operatorname{diam}_{\C(Y)}(\pi_Y(\mu) \cup \pi_Y(\nu))$.

We define the \emph{mapping class group} of~$S$, $\mcg(S)$, to be the group of isotopy classes of orientation\hyp{}preserving self\hyp{}homeomorphisms of~$S$ (note, we do not require these to fix the boundary pointwise). The action of $\mcg(S)$ on the set of curves on~$S$ induces an action of $\mcg(S)$ on $\C(S)$ by isometries. The \emph{pseudo-Anosov} elements of $\mcg(S)$ are precisely those that act loxodromically on~$\C(S)$ \cite[Proposition~4.6]{mm1}.

For the surface $S_{g,1}$, there is a natural map $F \colon \C(S_{g,1}) \to \C(S_{g,0})$ and a homomorphism $\mcg(S_{g,1}) \to \mcg(S_{g,0})$, both induced by capping the boundary component of $S_{g,1}$ with a disk.  The kernel of this homomorphism is the \emph{point pushing subgroup} of $\mcg(S_{g,1})$ \cite{Birman_Exact_Sequence}. We will denote the point pushing subgroup by $\Push(S_{g,1})$.
If $\mu$ is a multicurve on $S_{g,1}$ and $\phi\in\Push(S_{g,1})$, then $F(\mu) = F(\phi(\mu))$.

\section{Connectedness of the separating curve graph} \label{section: sep connectedness}

For completeness, we give a proof of the connectedness of the separating curve graph of $S=S_{g,b}$, with the appropriate definition of edges.
There {exist} proofs in the literature for various cases, but we could not find one that covers every case where $\sep(S)$ is non\hyp{}empty.
If $g=0$, then the separating curve graph is the same as the curve graph, and we refer the reader to \cite[Lemma~2.1]{mm1} when $b\ge5$, and \cite[Section~3]{minskygeometric} for~$S_{0,4}$.
A result implying connectedness of the separating curve graph for surfaces of genus at least 3 was announced by Farb and Ivanov in~\cite{farbivanov}. Numerous proofs have been given for the case of closed surfaces with genus at least~3 \cite{mccarthyv,mspants,putman}, and stronger connectivity results are proved in~\cite{looijenga} for surfaces of genus at least~2 that are not $S_{2,0}$ or~$S_{2,1}$. A proof for the cases where the edge relation of disjointness gives a connected graph previously appeared in~\cite{vokesthesis}. The proof we present here is a unified proof for all of the cases where the separating curve graph is non\hyp{}empty and {is} not the curve graph.

\begin{definition}
Let $S = S_{g,b}$ and define $\sep_K(S)$ to be the graph whose vertices are all separating curves on $S$, with two vertices joined by an edge if they intersect at most $K$ times. If the set of $K$ such that $\sep_K(S)$ is connected is non\hyp{}empty, we define $K_0$ to be the minimal value in this set, and define $\sep(S)=\sep_{K_0}(S)$.
\end{definition}

\begin{theorem} \label{theorem: sep connected}
Let $S=S_{g,b}$, with $g \ge 1$ and $2g+b \geq 4$.
\begin{itemize}
    \item If $(g,b) \in \{(1,2),(2,0),(2,1)\}$, then $\sep(S) = \sep_4(S)$.
    \item Otherwise $\sep(S)=\sep_0(S)$. 
\end{itemize}
\end{theorem}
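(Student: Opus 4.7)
The plan is to adapt a technique of Putman that reduces connectivity of a graph carrying a group action to finitely many explicit checks. Putman's lemma says that if a group $G$ acts on a graph $X$ and $V_0 \subseteq V(X)$ is a set of orbit representatives with $G \cdot V_0 = V(X)$, then $X$ is connected provided (i) any two elements of $V_0$ lie in a common component of $X$, and (ii) for some generating set $\Sigma$ of $G$, each $v \in V_0$ and each $s \in \Sigma^{\pm 1}$ have $v$ and $sv$ in a common component of $X$. I would apply this with $G = \mcg(S)$ acting on $X = \sep_{K_0}(S)$, and $\Sigma$ a finite generating set of $\mcg(S)$ by Dehn twists. Orbit representatives for the action of $\mcg(S)$ on separating curves are enumerated by the topological type of the two complementary pieces of a separating curve $\gamma$ in $S = S_{g,b}$, each piece labelled by its genus and by which of the $b$ boundary components of $S$ it contains; I fix a concrete finite set $V_0$ of such representatives in $S$ once and for all.

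For the generic case $(g,b) \notin \{(1,2),(2,0),(2,1)\}$ I would work with $K_0 = 0$ and verify the two Putman hypotheses by producing pairwise disjoint separating curves. The key observation for hypothesis (ii) is that $T_c(v)$ agrees with $v$ outside a regular neighborhood of the Dehn-twist curve $c$, so any separating curve $\delta$ disjoint from $v \cup c$ supplies a length-$2$ path $v$--$\delta$--$T_c(v)$ in $\sep_0(S)$; the assumption $2g+b \geq 4$ together with the exclusion of the three small cases is exactly what guarantees enough room inside $S \sminus (v \cup c)$ to find such a $\delta$. Hypothesis (i) is handled similarly: any two orbit representatives can be linked by a short chain of pairwise disjoint separating curves, using intermediate curves that bound small-genus subsurfaces in the complement of both.

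For the exceptional cases $(g,b) \in \{(1,2),(2,0),(2,1)\}$ I would run the identical Putman argument but in $\sep_4(S)$, exhibiting intersection-$\le 4$ constructions for each pair from $V_0$ and for each pair $v, T_c(v)$ with $c$ a generator. These surfaces have only one or two orbit types of separating curves, so the case analysis is short. To confirm that $K_0 = 4$ is \emph{minimal}, I would note that any two separating curves have zero algebraic intersection and hence even geometric intersection, so $\sep_{2k+1}(S) = \sep_{2k}(S)$, and then show that $\sep_2(S)$ is disconnected in each exceptional case: for $S_{2,0}$ this follows from the standard fact that any two distinct separating curves on a closed genus-$2$ surface intersect at least $4$ times, and for $S_{1,2}$ and $S_{2,1}$ from a direct inspection of how intersection-$2$ pairs can sit inside these small surfaces. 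The main obstacle I anticipate is the bookkeeping in the exceptional cases: verifying that every Dehn-twist image of every orbit representative can be reconnected by an intersection-$4$ path is more delicate than the disjoint-curve constructions used in the generic case, and will require careful diagrammatic work surface by surface rather than any deep new idea.
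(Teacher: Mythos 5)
Your proposal matches the paper's approach: Putman's trick applied to $\mcg(S)$ with a Dehn twist generating set, observing that the twist generators move the base curve by intersection at most $4$ (so $\sep_4(S)$ is connected), and in the generic case upgrading to disjointness by producing a separating curve missed by both the base curve and its twisted image. Two simplifications the paper uses that resolve the parts you flag as requiring ``careful diagrammatic work'': first, condition (i) of Putman's lemma becomes essentially free by taking a \emph{single} base vertex $\alpha$ that cuts off a handle (a copy of $S_{1,1}$) --- since every separating curve has a complementary component containing a handle, every vertex of $\sep(S)$ is either in $\mcg(S)\cdot\alpha$ or disjoint from a curve in that orbit, so no chains between multiple orbit representatives are needed; second, the disconnectedness of $\sep_2(S)$ in all three exceptional cases follows from one uniform observation rather than surface-by-surface inspection: if $\alpha$ is separating and cuts off a handle $Z$, and $i(\alpha,\beta)=2$, then $\beta\cap Z$ is a single arc, and any curve in $Z$ crossing that arc once would meet $\beta$ exactly once, contradicting that $\beta$ is separating. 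Your parity observation (algebraic intersection zero, hence geometric intersection even) is also exactly what the paper implicitly relies on to reduce minimality to the $\sep_2$ case.
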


The proof employs the following trick of Putman to establish connectedness.

\begin{lemma}[{\cite[Lemma~2.1]{putman}}]\label{lem:putman_trick}
Let $\G$ be a simplicial graph and suppose the group $G$ acts on $\G$ by simplicial automorphisms. Fix a vertex $v_0 \in \G$ and a symmetric generating set $X$ for $G$. Suppose that:
\begin{itemize}
    \item for all vertices $v \in \G$, the orbit $G \cdot v_0$ intersects the connected component of $\G$ containing $v$;
    \item for all $g \in X$, $v_0$ is connected to $g\cdot v_0$ in $\G$.
\end{itemize}
Then, the graph $\G$ is connected.
\end{lemma}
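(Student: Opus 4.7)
The plan is to show that $\G_0$, defined as the connected component of $\G$ containing $v_0$, equals all of $\G$. The argument splits naturally into two steps, matching the two hypotheses: first I would show that the entire orbit $G \cdot v_0$ is contained in $\G_0$, and then I would use the orbit-meets-every-component hypothesis to conclude that every vertex lies in $\G_0$.

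For the first step, given $g \in G$, I would write $g = x_1 x_2 \cdots x_n$ as a product of generators from the symmetric set $X$, and set $g_i = x_1 \cdots x_i$ (with $g_0 = e$). I proceed by induction on $i$ to show that $g_i \cdot v_0 \in \G_0$. The base case $i = 0$ is trivial. For the inductive step, the second hypothesis furnishes a path $p$ in $\G$ from $v_0$ to $x_{i+1} \cdot v_0$; because $G$ acts by simplicial automorphisms, $g_i \cdot p$ is a path from $g_i \cdot v_0$ to $g_i x_{i+1} \cdot v_0 = g_{i+1} \cdot v_0$. Combined with the inductive hypothesis that $g_i \cdot v_0 \in \G_0$, this places $g_{i+1} \cdot v_0$ in $\G_0$ as well, completing the induction.

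For the second step, given an arbitrary vertex $v \in \G$, the first hypothesis produces some $h \in G$ such that $h \cdot v_0$ lies in the connected component of $v$. The previous step gives $h \cdot v_0 \in \G_0$, so the connected component of $v$ meets $\G_0$ and therefore coincides with $\G_0$. Hence $v \in \G_0$, and since $v$ was arbitrary, $\G = \G_0$ is connected.

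There is no real obstacle here: the content of the argument is the observation that transporting a connecting path by an element of $G$ yields a new connecting path, so once $v_0$ and each $g \cdot v_0$ for $g \in X$ lie in a common component, the entire orbit telescopes into that component along any word in the generators. The only point requiring mild care is to verify that the second hypothesis is being used with partial products rather than with arbitrary pairs of group elements, which is precisely what makes the induction on word length work.
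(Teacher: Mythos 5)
The paper cites this lemma from Putman without reproducing a proof, so there is no in-paper argument to compare against. Your proof is correct and is essentially the standard (and essentially unique) argument: induct on word length to transport $v_0$ through the orbit using translated paths, then use the orbit-hits-every-component hypothesis to sweep up the rest of $\G$. The only place where care is needed — using the second hypothesis with the partial products $g_i$ and then translating the path by $g_i$, rather than trying to apply the hypothesis directly to a full group element — is exactly the point you flagged and handled correctly.
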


In our case, $G$ will be the mapping class group, $\mcg(S)$, and the generating set $X$ will be the left and right Dehn twists around each of the curves shown in Figure~\ref{figure: generators} plus half twists exchanging any two boundary components.
This is an extension of the Humphries generating set to the case of non-closed surfaces; see, for example, \cite[Section~4.4.4]{primer}.

\begin{figure}[!h]
\centering
\includegraphics[width=.5\textwidth]{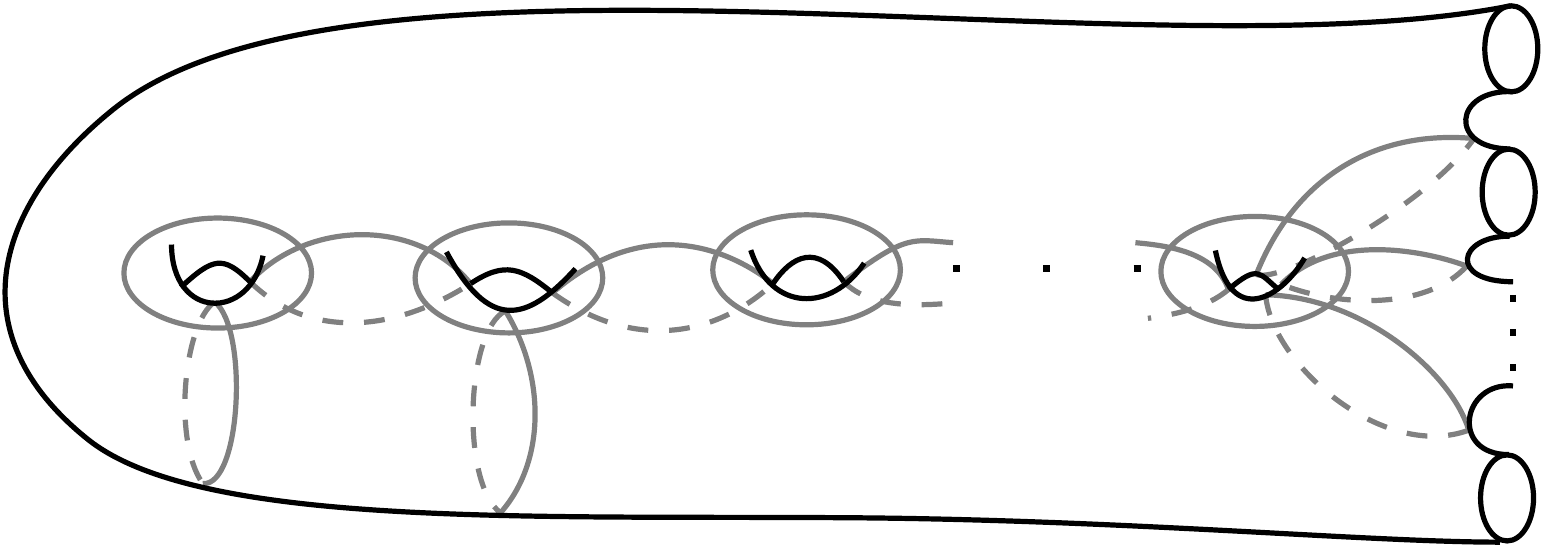}
\caption{Generating Dehn twists for $\mcg(S)$.}
\label{figure: generators}
\end{figure}

\begin{proof}[Proof of Theorem~\ref{theorem: sep connected}]
Let $X$ be the generating set for $\mcg(S)$ described above. 
Fix a base vertex $\alpha$ of $\sep(S)$ as shown in Figure~\ref{figure: generatorswithcurve} so that $\alpha$ cuts off a \emph{handle}, a subsurface homeomorphic to~$S_{1,1}$.

\begin{figure}[!h]
\centering

\begin{subfigure}[b]{0.6\textwidth}
\def\svgscale{.5}
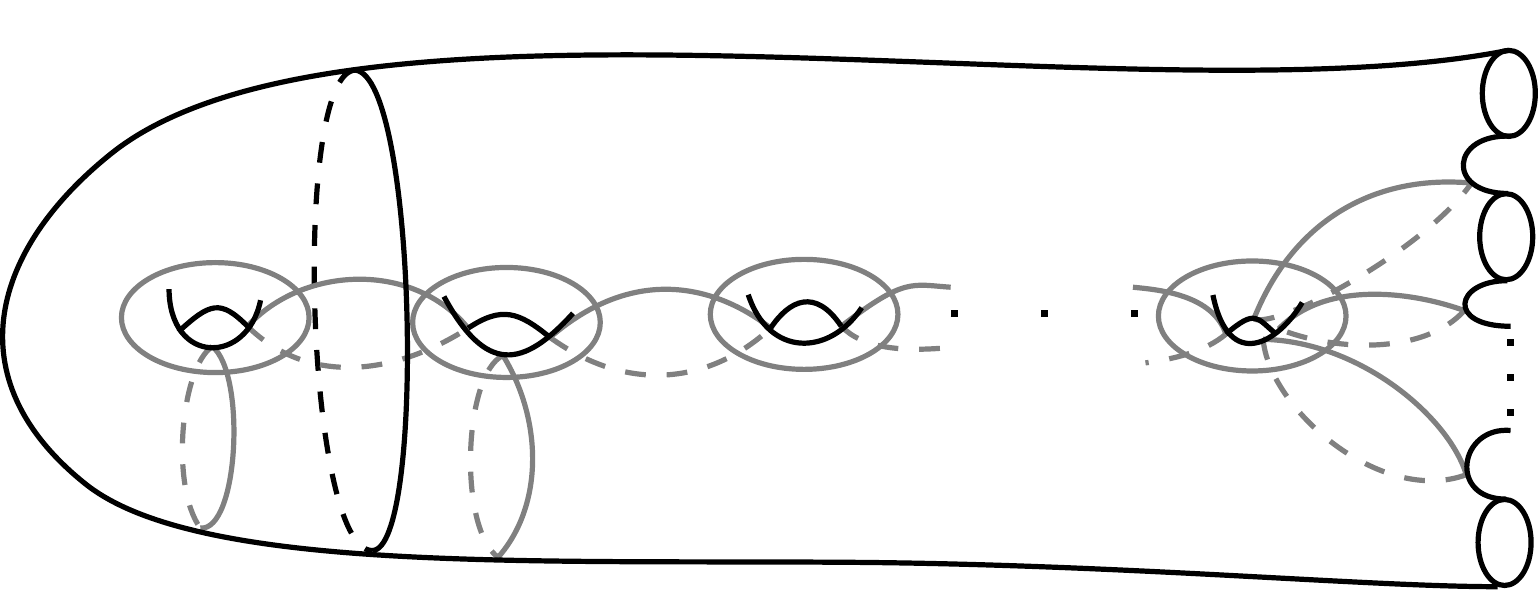
\subcaption{Genus at least 2.}
\label{figure: generatorswithcurve2}
\end{subfigure}
\quad
\begin{subfigure}[b]{0.3\textwidth}
\def\svgscale{.5}
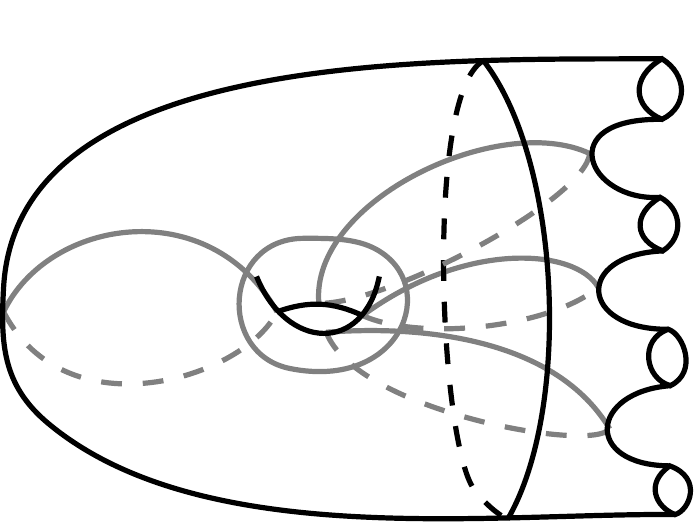
\subcaption{Genus 1.}
\label{figure: generatorswithcurve1}
\end{subfigure}

\caption{A choice of base vertex $\alpha$ for $\sep(S)$.}\label{figure: generatorswithcurve}
\end{figure}

The mapping class group orbit of a separating curve is determined by the topological type of its complement.
For every separating curve~$\alpha'$, $S \sminus \alpha'$ has a component that contains a handle. Hence, $\alpha'$ is either in the $\mcg(S)$\hyp{}orbit of $\alpha$ or connected to the $\mcg(S)$\hyp{}orbit of $\alpha$ by an edge.

Now, let $\phi$ be an element of our generating set $X$. If $\phi$ exchanges  two boundary components of $S$, then $\phi(\alpha) = \alpha$. If $\phi$ is a left or right Dehn twist about one of the generating curves shown in Figure~\ref{figure: generators}, then $i(\alpha, \phi(\alpha))\le 4$ as any of these curves intersects $\alpha$ at most twice. Thus, $\sep_K(S)$ will be connected when~$K \geq 4$ by Lemma~\ref{lem:putman_trick}.

We now prove that the minimal choice of $K$ is as asserted in the theorem.
If $S$ is not $S_{2,0}$, $S_{2,1}$ or $S_{1,2}$, then we want to show that, for any of our generators $\phi$, there is a sequence of separating curves joining $\alpha$ and $\phi(\alpha)$ so that consecutive curves are disjoint.
If $\phi$ is an exchange of boundary components or a Dehn twist about a curve that is disjoint from $\alpha$, then $\phi(\alpha)=\alpha$ and we are done.
Otherwise, $\phi$ is a left or right Dehn twist about one of the generating curves $\gamma$ that intersects $\alpha$ twice (see Figure~\ref{figure: generatorswithcurve}).
We claim that when $S$ is not $S_{2,0}$, $S_{2,1}$ or $S_{1,2}$, there is a separating curve disjoint from both $\alpha$ and $\phi(\alpha)$.
The curves $\alpha$ and $\gamma$ fill a $4$\hyp{}holed sphere. Since $\alpha$ cuts off an handle, two of the boundary components of this  $4$\hyp{}holed sphere are identified to form a $S_{1,2}$~subsurface $Y$ of~$S$ with boundary components $\delta_1$ and $\delta_2$ (see Figure~\ref{figure:sep:disjoint from twist}).
This subsurface $Y$ contains both $\alpha$ and $\phi(\alpha)$. We shall argue that $S \sminus Y$ or $\partial_S Y$ must contain a separating curve of~$S$.

Suppose $g \geq 2$. Since $S$ is not $S_{2,0}$ or $S_{2,1}$, then $S \sminus Y$ is a connected subsurface with either genus at least~1 or at least four boundary components.  In either case, there exists a curve $\alpha' \subset S \sminus Y$  that separates the curves $\delta_1$ and $\delta_2$ from the genus or the other boundary curves of $S\sminus Y$ (see Figure~\ref{figure:sep:disjoint from twist2}).  If $g = 1$, then $b \geq 3$  since $S \neq S_{1,2}$ and $2g+b \geq 4$.
In this case, $S \sminus Y$ must have a component containing at least two boundary components of~$S$. Thus, there is a curve $\alpha'$ in $S \sminus Y$ or $\partial_S Y$ that cobounds a pair of pants with two boundary components of~$S$  (see Figure~\ref{figure:sep:disjoint from twist1}).
In all cases, $\alpha'$ is a separating curve on $S$ that is disjoint from $Y$ and hence disjoint from both $\alpha$ and~$\phi(\alpha)$. This deals with all of the cases where $\sep(S)=\sep_0(S)$.

\begin{figure}[h!]
\centering

\begin{subfigure}[b]{0.6\textwidth}
\def\svgscale{.5}
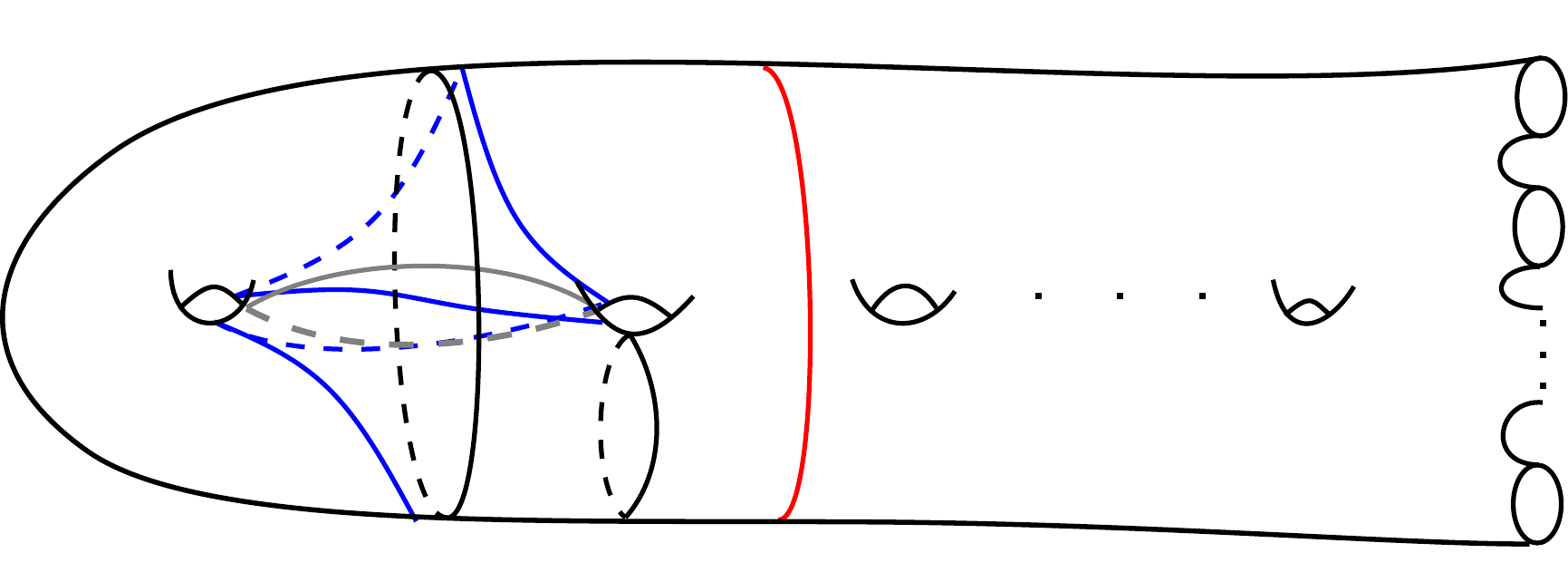
\subcaption{Genus at least 2.}
\label{figure:sep:disjoint from twist2}
\end{subfigure}
\quad
\begin{subfigure}[b]{0.3\textwidth}
\def\svgscale{.5}
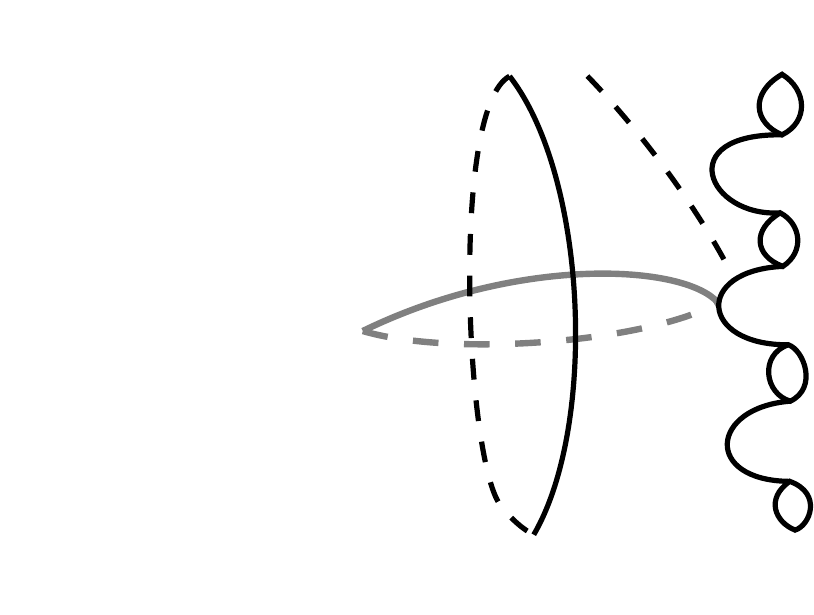
\subcaption{Genus 1.}
\label{figure:sep:disjoint from twist1}
\end{subfigure}

\caption{The union of $\alpha$ and its image under a generating Dehn twist $\phi$ is always disjoint from another separating curve $\alpha'$ when $S$ is not $S_{2,0}$, $S_{2,1}$, or $S_{1,2}$.}
\label{figure:sep:disjoint from twist}
\end{figure}

Now, suppose $S$ is $S_{1,2}$, $S_{2,0}$ or~$S_{2,1}$.
In the first two cases, $\sep_0(S)$ contains no edges, since there are no pairs of disjoint separating curves on~$S$.
For $S=S_{2,1}$, the graph $\sep_0(S)$ does contain edges, but is known to not be connected \cite[Lemma~3.4]{sultanthesis}.
Thus, to show that $\sep(S) = \sep_4(S)$ in these cases, it is sufficient to show that no two separating curves on $S$ have intersection number exactly~2.
Let $\alpha$ be any separating curve on~$S$. In the cases we are considering, $S \sminus \alpha$ must have a component that is a copy of~$S_{1,1}$.
Denote this $S_{1,1}$ component by~$Z$. If $\beta$ is a curve on $S$ with $i(\alpha,\beta) = 2$, then $\beta \cap Z$ is a single arc and there exists a curve $\gamma \subseteq Z$ which intersects the arc $\beta \cap Z$ exactly once. This implies $\beta$ cannot be separating as $\beta$ and $\gamma$ intersect exactly once on~$S$. Thus,  no two separating curves on $S_{1,2}$, $S_{2,0}$ or $S_{2,1}$ have intersection number exactly~2. 
\end{proof}

\section{Hierarchical Graphs of Multicurves}\label{section: hierarchies}

We now describe a broad class of graphs associated to surfaces introduced in \cite{vokessep}.  While we are primarily interested in the separating curve graph, this greater level of generality is helpful for understanding the thick subsets of $\sep(S)$. A \emph{graph of multicurves} on a surface $S = S_{g,b}$ is a non\hyp{}empty graph whose vertices are multicurves on~$S$.
If $\G(S)$ is a graph of multicurves on~$S$, then we say a {connected} subsurface $W \subseteq S$ is a \emph{witness} for $\G(S)$ if $W$ is not homeomorphic to $S_{0,3}$ and every vertex of $\G(S)$ intersects~$W$ (in other words, $W$ is a witness if every vertex of $\G(S)$ has non\hyp{}trivial subsurface projection to~$W$ \cite[Section~2]{mm2}).
If $\xi(S)\ge1$, the entire surface $S$ will always be a witness for~$\G(S)$.
We denote the set of  witnesses for $\G(S)$ by $\Wit\bigl(\mc{G}(S)\bigr)$. 

The next proposition concretely describes the witnesses for the separating curve graph; examples and non-examples are given in Figure~\ref{figure: holes-nonholes}.

\begin{proposition}\label{proposition: sep holes}
Let $S = S_{g,b}$ with $2g+b \geq 4$. A connected subsurface $Y \subseteq S$ is a witness for $\sep(S)$ if and only if $Y$ is not a copy of~$S_{0,3}$ and every component of $S \sminus Y$ is planar and contains at most one boundary component of~$S$. In particular, no witness for $\sep(S)$ is an annulus.
\end{proposition}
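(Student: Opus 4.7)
The plan is to prove both directions of the equivalence by analyzing how a separating curve can fail to meet $Y$, and then to deduce the annular statement from the main characterization. The direction $(\Rightarrow)$ I will prove by contrapositive, building an explicit ``bad'' separating curve whenever a component of $S \sminus Y$ violates the condition; the direction $(\Leftarrow)$ I will prove by assuming a separating curve $c$ is disjoint from $Y$ and deriving that $c$ must be trivial, peripheral, or non-separating in $S$.

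For $(\Rightarrow)$, suppose $U$ is a component of $S \sminus Y$ that has positive genus or contains at least two boundary components of $S$. In the first case I choose $c \subset U$ bounding an $S_{1,1}$ handle $H$ in the interior of $U$; cutting $S$ along $c$ isolates $H$, so $c$ is essential, non-peripheral, separating in $S$, and disjoint from $Y$. In the second case, with $U$ planar and containing boundary components $\partial_1, \partial_2$ of $S$, I take $c \subset U$ cobounding a pair of pants $P$ with $\partial_1$ and $\partial_2$. Cutting along $c$ isolates $P$ (whose other two boundaries are in $\partial S$), so $c$ is separating. For $c$ to be useful I need it to be non-peripheral in $S$; here I use that, since $S$ is connected and (the interesting case) $Y$ is non-empty, $U$ must share at least one boundary with $Y$, so the complementary piece of $c$ in $U$ is not an annulus between $c$ and a component of $\partial S$.

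For $(\Leftarrow)$, assume the complement condition and let $c$ be a separating curve of $S$ disjoint from $Y$. Then $c$ is isotopic either into the interior of some component $U$ of $S \sminus Y$ or onto a curve of $\partial_S Y$. If $c$ is a boundary curve of $Y$ shared with $U$, then for $c$ to separate $S$ it must be the unique boundary shared between $Y$ and $U$ (otherwise $Y$ and $U$ remain connected in $S \sminus c$). Combined with the hypothesis, $U$ is then planar with boundary $c$ and at most one component of $\partial S$, hence a disk or annulus, so $c$ is trivial or peripheral. If instead $c$ lies in the interior of $U$, then $c$ separates the planar surface $U$ into two pieces $U_1, U_2$; connectedness of $Y$ forces all $Y$-shared boundaries of $U$ to lie on one side, say $U_1$, so $U_2$ has boundary $c$ plus at most one component of $\partial S$, and is again a disk or annulus, forcing $c$ to be trivial or peripheral. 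Both outcomes contradict the choice of $c$.

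The ``in particular'' statement then follows from the characterization: if $W$ is an annulus and $g \geq 1$, then $S \sminus W$ retains all the genus of $S$, so some component is non-planar; if $g = 0$, then $2g + b \geq 4$ together with essentiality of the core of $W$ forces each side of that core to contain at least two boundary components of $S$. In both cases the complement condition fails. I expect the subtlest step to be in the $(\Leftarrow)$ direction, where the ``at most one boundary component of $S$'' hypothesis is precisely what is needed to rule out candidate separating curves being isotopic to a component of $\partial S$; tracking which boundary components land on which side of $c$, and exploiting the connectedness of $Y$, will be the key bookkeeping.
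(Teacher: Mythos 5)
Your proof of the main equivalence is correct and follows essentially the same underlying idea as the paper's, though you elaborate more cases. For $(\Rightarrow)$, the paper simply observes that if $Y$ is a witness then no component of $S\sminus Y$ can contain a separating curve of $S$, so none can have genus or more than one boundary component of $S$; your contrapositive explicitly constructs the bad separating curve, which is fine. For $(\Leftarrow)$, the paper's argument is more compact: if $\alpha$ is separating and disjoint from $Y$, then one component of $S\sminus\alpha$ is a subsurface of a component of $S\sminus Y$, and since $\alpha$ is essential, non\hyp{}peripheral and separating, that component must have genus or at least two boundary components of $S$, contradicting the hypothesis. Your version splits off the $\partial_S Y$ case and tracks which side of $c$ the $Y$\hyp{}shared boundaries lie in; the bookkeeping is correct but the paper's one\hyp{}paragraph argument subsumes both cases at once.

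However, your argument for the ``in particular'' statement has a genuine gap. You claim that for $g\geq 1$ and $W$ an annulus, $S\sminus W$ retains all the genus of $S$ and hence has a non\hyp{}planar component. This is false when $W$ has non\hyp{}separating core: removing an open annular neighborhood of a non\hyp{}separating simple closed curve from $S_{g,b}$ yields $S_{g-1,\,b+2}$, \emph{not} a surface of genus $g$. In particular, for $S_{1,b}$ with $b\geq 2$, the complement $S\sminus W\cong S_{0,\,b+2}$ is planar, so your ``non-planar'' conclusion does not hold; instead, the complement condition fails because this single component contains $b\geq 2\geq 2$ boundary components of $S$. Your argument as written does not notice this. (For $g\geq 2$ the mistaken genus count is harmless since $g-1\geq 1$ still gives a non\hyp{}planar component.) The paper handles this by reading off from the main characterization that an annular witness can only occur when $S$ is an annulus, $S_{1,0}$, or $S_{1,1}$, all excluded by the hypothesis $2g+b\geq 4$; you would need a similar case analysis, or to supplement your $g\geq 1$ case with the observation about boundary components when the core is non\hyp{}separating.
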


\begin{proof}
Let $Y$ be a witness for $\sep(S)$.
Then every component of $\partial_S Y$ is non\hyp{}separating in $S$ and no component of $S \sminus Y$ contains a separating curve of~$S$.
In particular, no component of $S \sminus Y$ can have positive genus or contain more than one component of~$\partial S$.

Conversely, let $Y$ be a connected subsurface where every component of $S \sminus Y$ is planar and contains at most one component of~$\partial S$.
If $\alpha$ is a separating curve disjoint from~$Y$, then one component $Z$ of $S \sminus \alpha$ is a subsurface of a component of~$S \sminus Y$.
Since $\alpha$ is separating, $Z$ either has genus or contains at least two boundary components of~$S$. 
This contradicts the assumption that each component of $S \sminus Y$ is planar and contains at most one boundary component of $S$.
Hence no such separating curve exists and $Y \in \Wit\bigl(\sep(S)\bigr)$.

Now, suppose  $Y \in \Wit\bigl( \sep(S) \bigr)$ is an annulus.
From above, every component of $S \sminus Y$ is planar and contains at most one component of $\partial S$.
The only possibilities are either that $S=Y$ is an annulus, or $S = S_{1,0}$ or~$S_{1,1}$ and $S \sminus Y$ is an annulus or pair of pants meeting $Y$ along two curves.
These cases are all excluded by our hypotheses on~$S$.
\end{proof}

\begin{figure}[h!]
\centering
\begin{subfigure}[b]{0.48\textwidth}
\centering
\includegraphics[width=.5\textwidth]{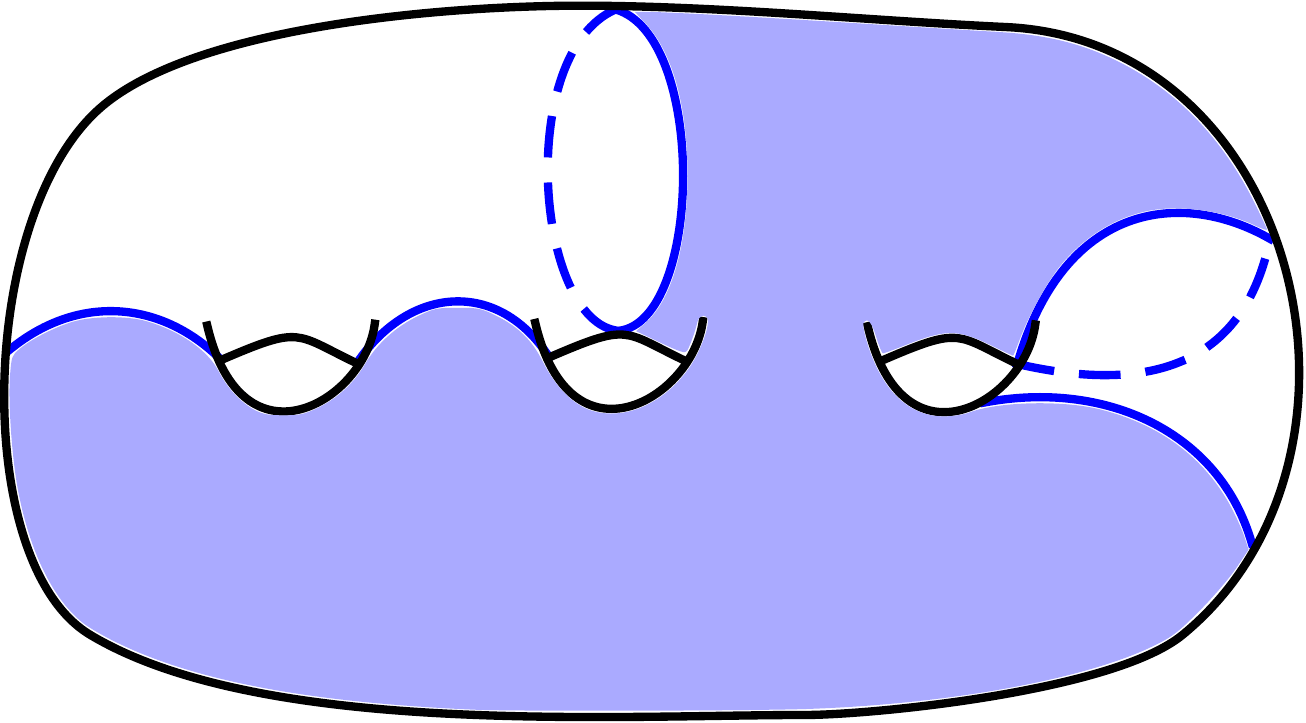}
\includegraphics[width=.35\textwidth]{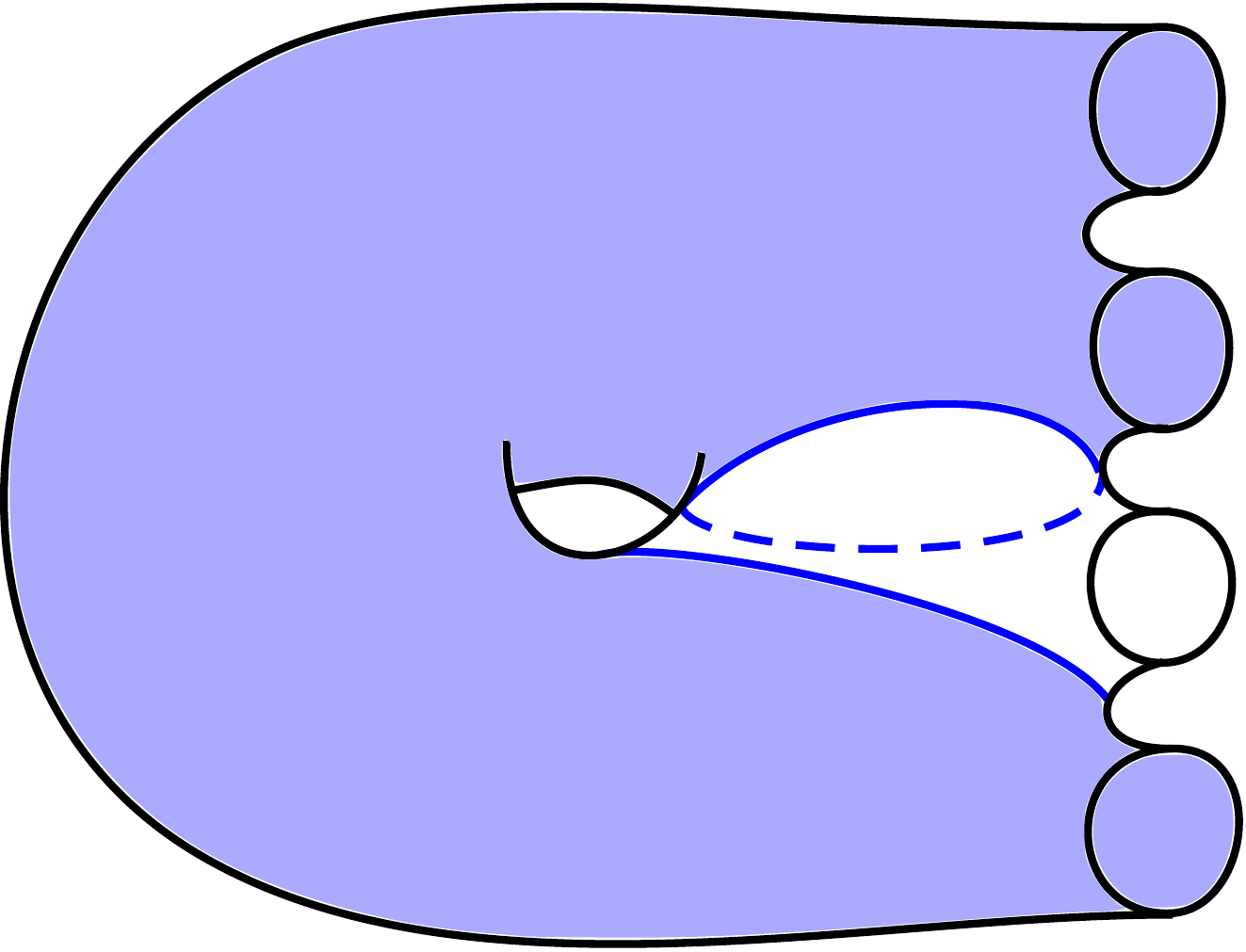}
\subcaption{Witnesses for $\sep(S)$.}
\label{figure: holes}
\end{subfigure}
\begin{subfigure}[b]{0.48\textwidth}
\centering
\includegraphics[width=.5\textwidth]{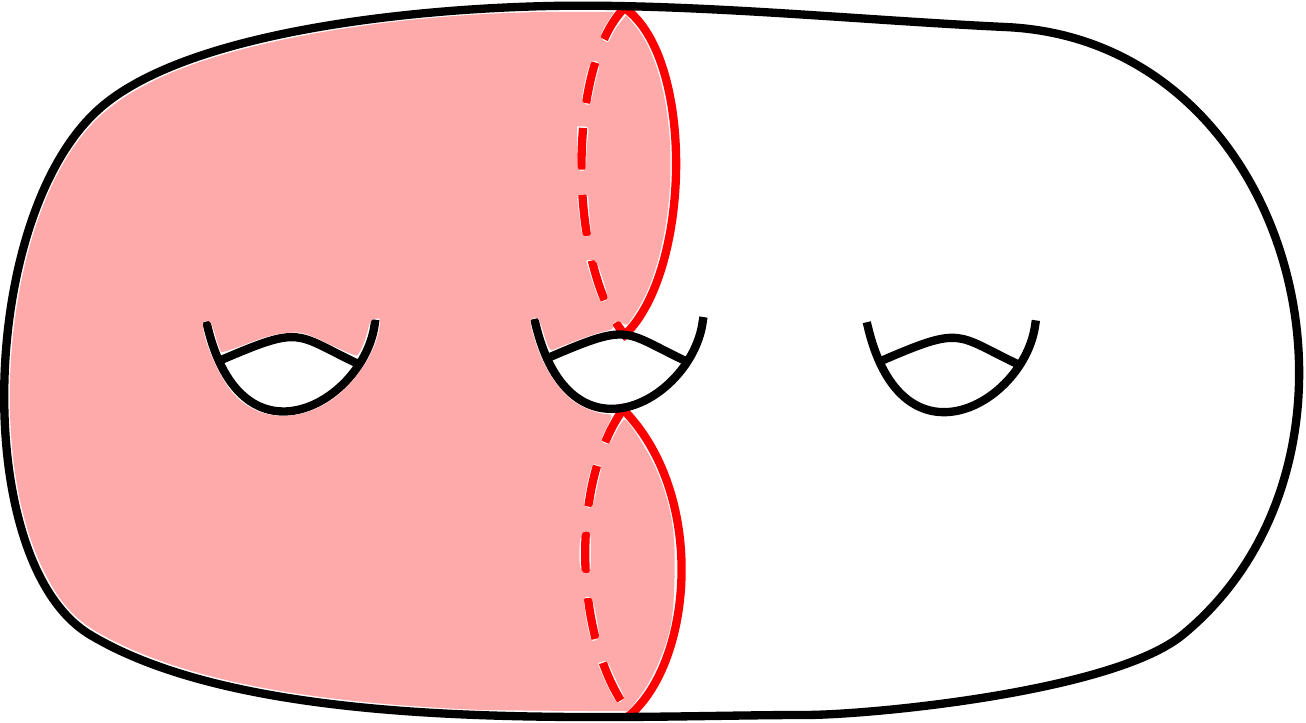}
\includegraphics[width=.35\textwidth]{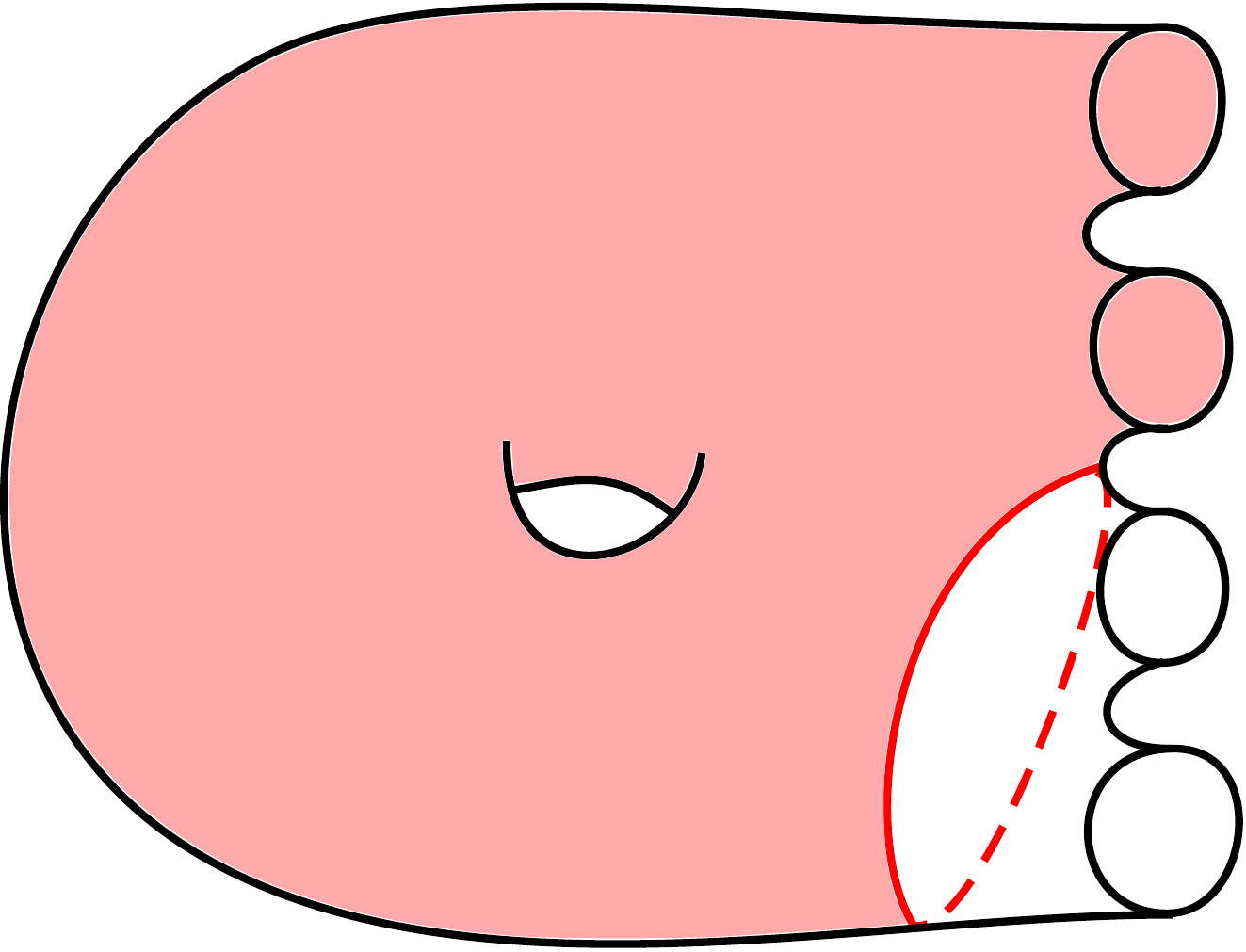}
\subcaption{Non-witnesses for $\sep(S)$.}
\label{figure: nonholes}
\end{subfigure}
\caption{}
\label{figure: holes-nonholes}
\end{figure}

In \cite{vokessep}, the second author provides a construction to produce a graph of multicurves on $S$ whose witnesses contain a specified collection of subsurfaces.

\begin{definition} \label{definition: ksep}
Let $\mf{S}$ be a collection of connected subsurfaces of $S = S_{g,b}$ with $\xi(W) \geq 1$ for all $W \in \mf{S}$.
If $\mf{S} = \emptyset$, define $\ksep_\mf{S}(S)$ to be a single point. Otherwise, define $\ksep_{\mf{S}}(S)$ to be the graph so that:
\begin{itemize}
    \item vertices are all multicurves $x$ on $S$ so that each component of $S \sminus x$ is not an element of~$\mf{S}$;
    \item  two multicurves $x$ and $y$ are joined by an edge if either of the following conditions hold:
\begin{enumerate}
\item $x$ differs from~$y$ by either adding or removing a single curve (see Figure~\ref{figure: path1});
\item $x$ differs from $y$ by ``flipping" a curve in some subsurface of~$S$, that is, $y$ is obtained from $x$ by replacing a curve $\alpha$ by a curve~$\beta$, where $\alpha$ and $\beta$ are contained in the same component $Y_\alpha$ of $S \sminus (x\sminus\alpha)$ and are adjacent in~$\C(Y_\alpha$) (see Figure~\ref{figure: path2}). 
\end{enumerate}
\end{itemize}
If $\G(S)$ is a graph of multicurves, define $\ksep_\G(S) = \ksep_\mf{S}(S)$ where $\mf{S} = \Wit(\G)$. 
\end{definition}

The connectedness of the pants graph implies $\ksep_\mf{S}(S)$ is always connected \cite[Claim 3.3]{vokessep}. There is a natural inclusion $\G(S) \to \ksep_\G(S)$ since every vertex of $\G(S)$ will also be a vertex of $\ksep_\G(S)$. Theorem \ref{theorem: k qi g} below gives sufficient conditions for this map to be a quasi-isometry.

\begin{figure}[h!]
\centering
\begin{subfigure}[b]{0.235\textwidth}
\centering
\includegraphics[width=\textwidth]{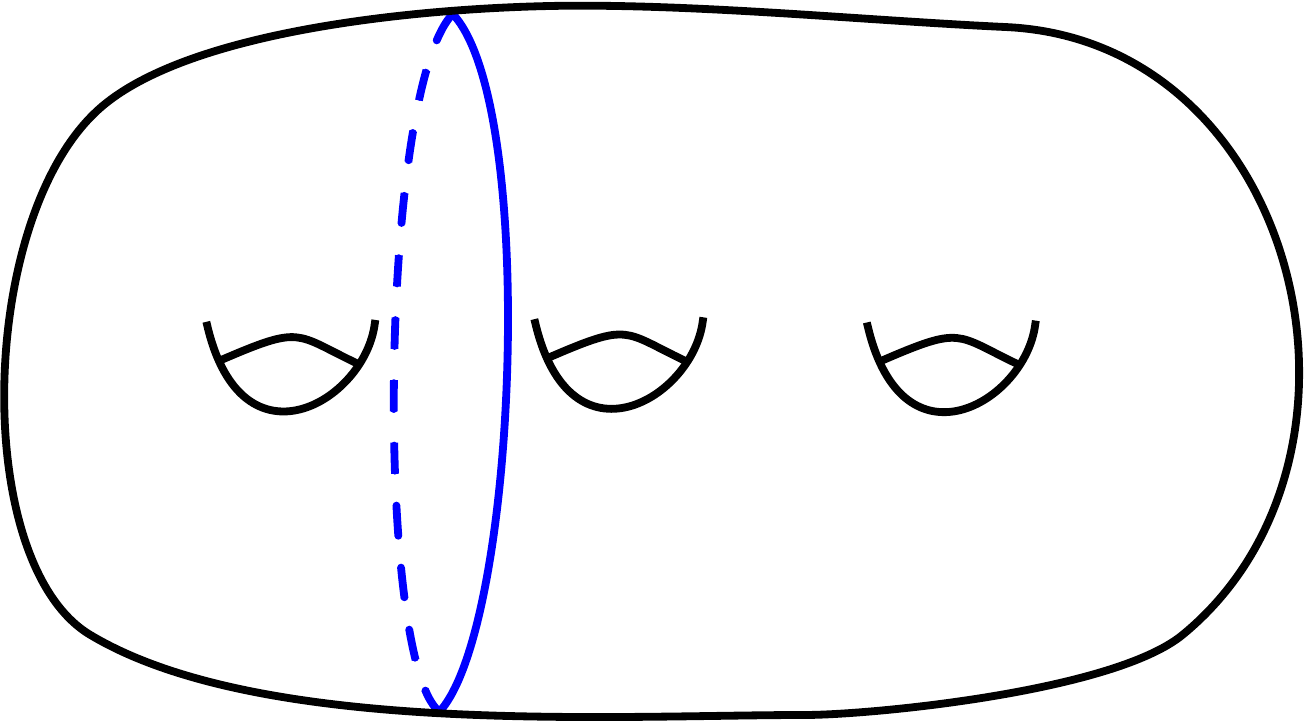}
\end{subfigure}
\begin{subfigure}[b]{0.235\textwidth}
\centering
\includegraphics[width=\textwidth]{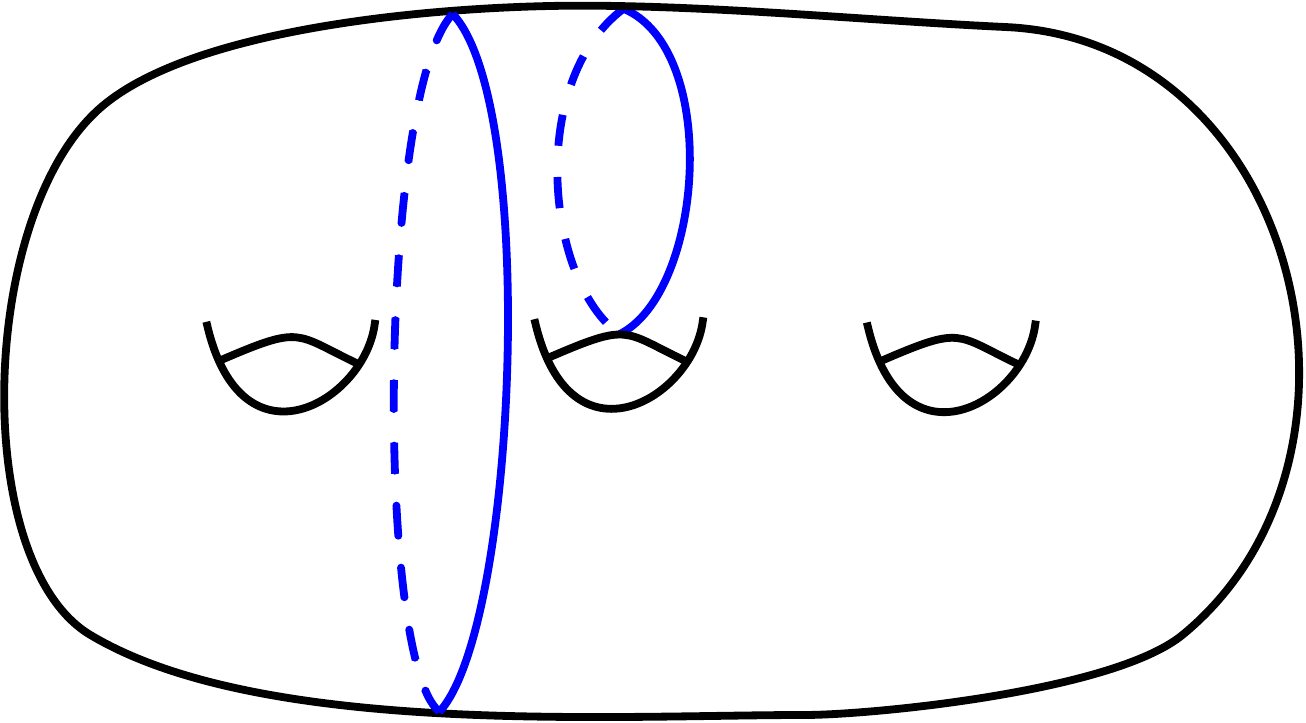}
\end{subfigure}
\begin{subfigure}[b]{0.235\textwidth}
\centering
\includegraphics[width=\textwidth]{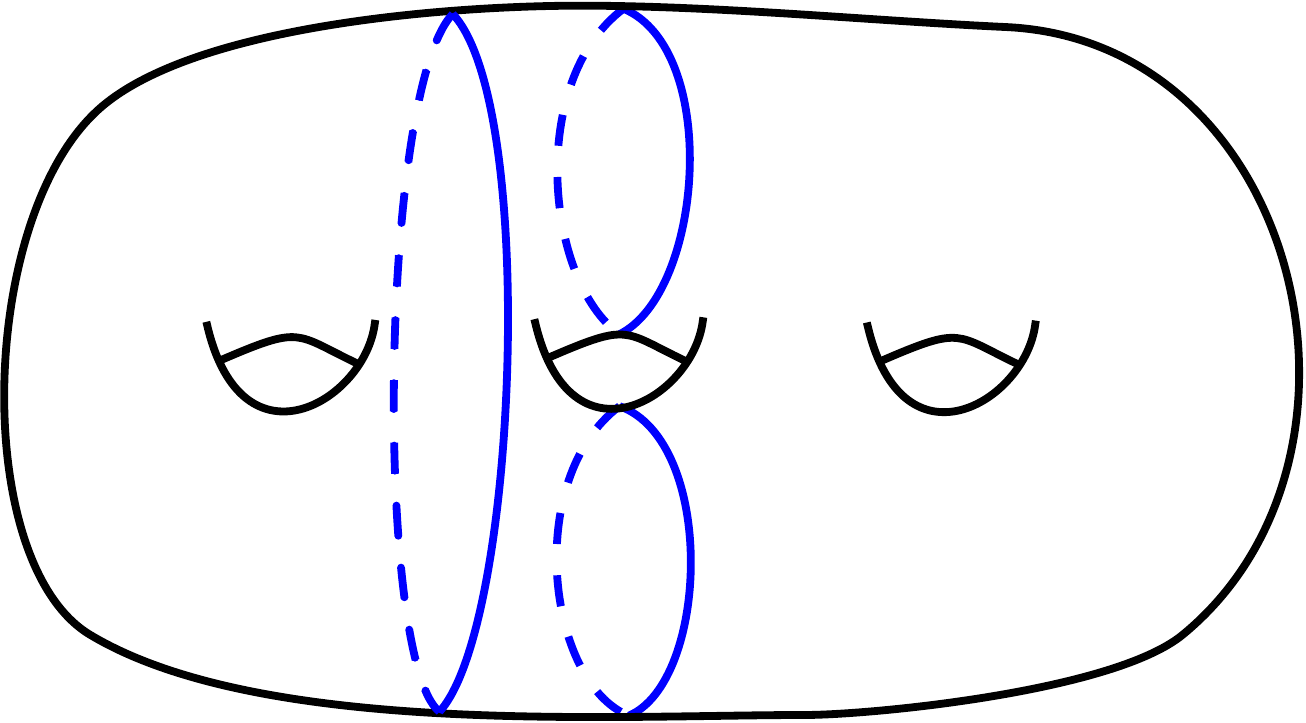}
\end{subfigure}
\begin{subfigure}[b]{0.235\textwidth}
\centering
\includegraphics[width=\textwidth]{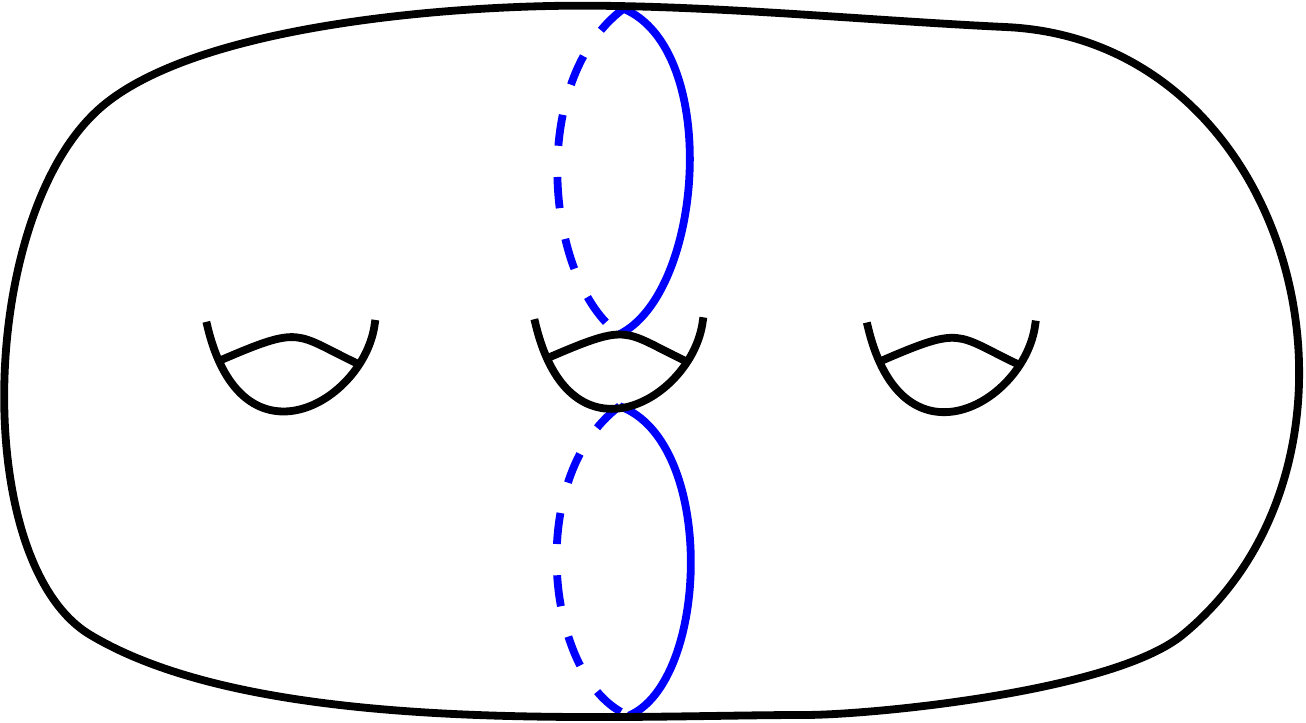}
\end{subfigure}
\caption{An example of a path in $\ksep_{\sep}(S_3)$ given by adding and removing curves.}
\label{figure: path1}
\end{figure}

\begin{figure}[h!]
\centering
\begin{subfigure}[b]{0.25\textwidth}
\centering
\includegraphics[width=\textwidth]{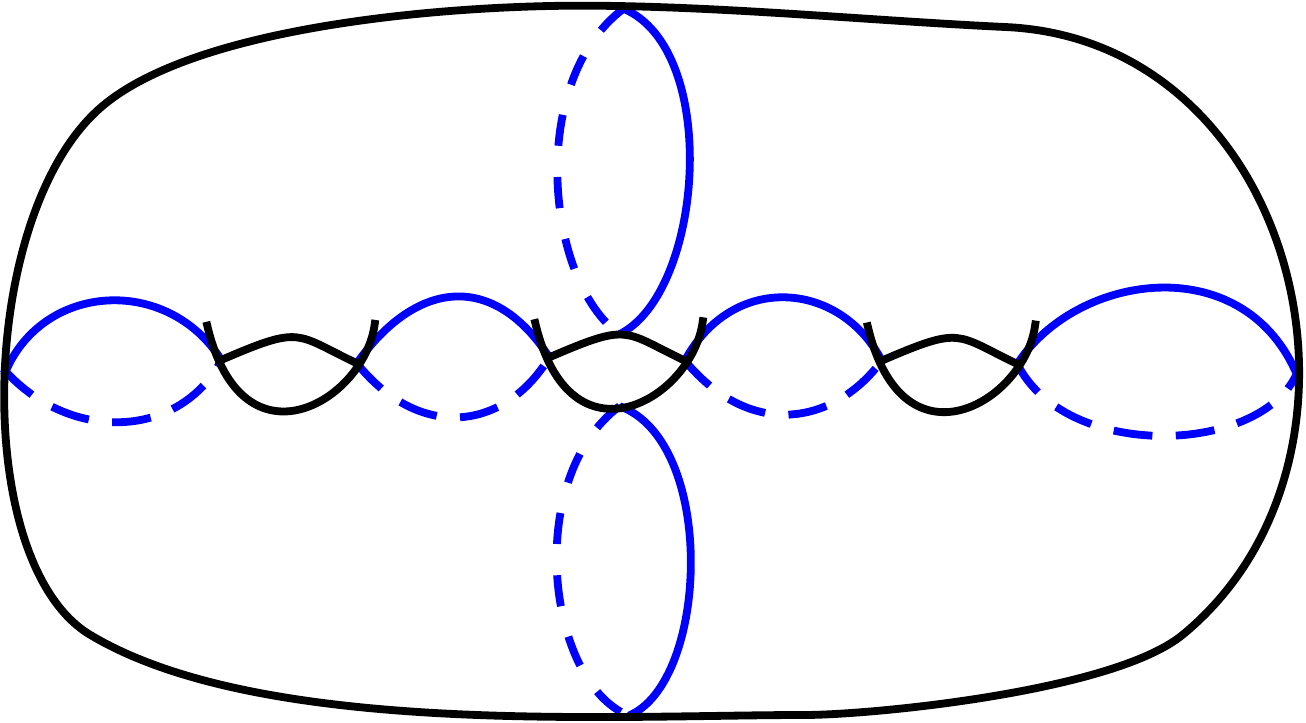}
\end{subfigure}
\qquad \qquad
\begin{subfigure}[b]{0.25\textwidth}
\centering
\includegraphics[width=\textwidth]{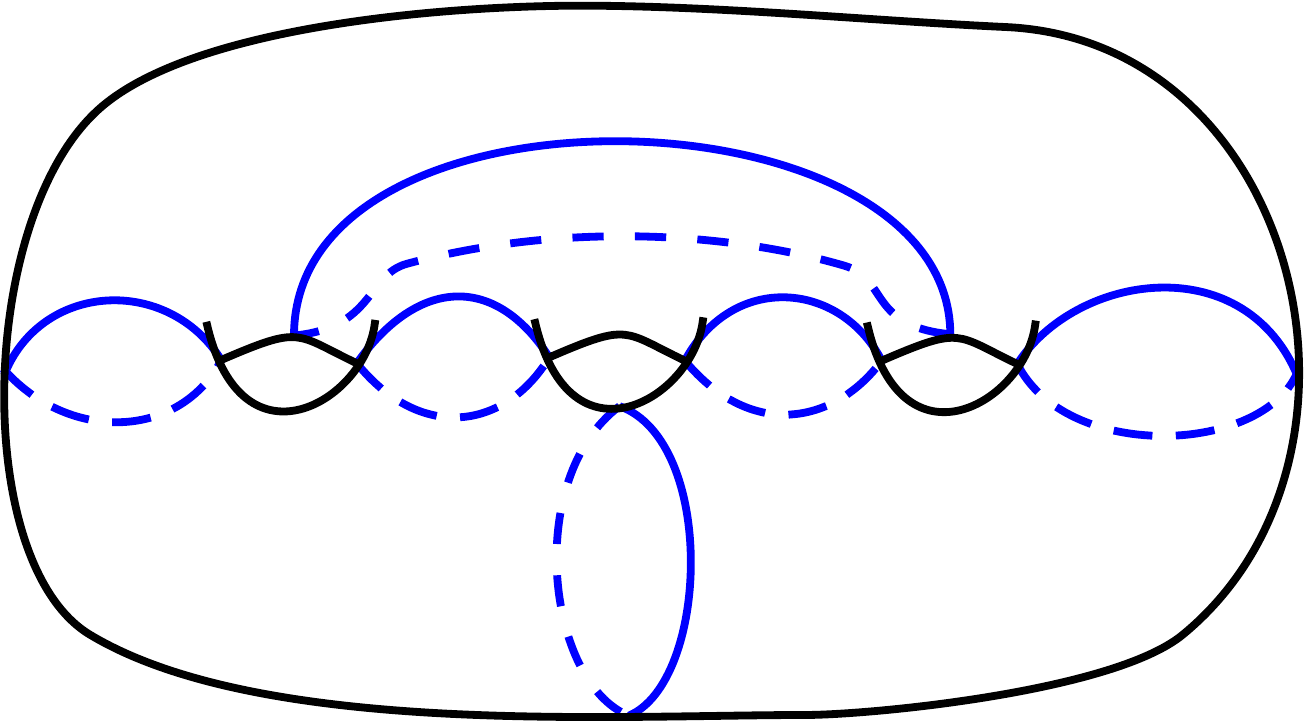}
\end{subfigure}
\caption{An example of a flip move in $\ksep_{\sep}(S_3)$.}
\label{figure: path2}
\end{figure}

The main result of \cite{vokessep} gives simple conditions for a graph of multicurves to be a hierarchically hyperbolic space. We direct the reader to \cite{hhs2,HHS_survey} for a complete definition of a hierarchically hyperbolic space and instead will only note the salient consequences in the context of this paper.

\begin{definition}[Hierarchical graph of multicurves]
We call a graph of multicurves $\mc{G}(S)$  \emph{\hierarchical} if 
\begin{enumerate}
    \item $\mc{G}(S)$ is connected;
    \item the action of the mapping class group on the set of curves on $S$ induces an action by graph automorphisms on $\mc{G}(S)$;
    \item there exists $R>0$ such that any two adjacent vertices of $\mc{G}(S)$ intersect at most $R$ times; 
    \item $\Wit\bigl(\mc{G}(S)\bigr)$ contains no annuli.
\end{enumerate}
\end{definition}

\begin{theorem}[{\cite[Theorem 1.1]{vokessep}}]\label{theorem: hierarchial implies HHS}
If $\G(S)$ is a hierarchical graph of multicurves on the surface $S$, then $\G(S)$ is a hierarchically hyperbolic space.
\end{theorem}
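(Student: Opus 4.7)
The plan is to verify the axioms of a hierarchically hyperbolic space directly, using $\mathfrak{S} = \Wit(\G(S))$ as the index set, the curve graph $\C(W)$ of each witness $W$ as the associated Gromov hyperbolic space (hyperbolic by Masur--Minsky), and Masur--Minsky subsurface projections as the projection maps $\pi_W \colon \G(S) \to \C(W)$. The relations on $\mathfrak{S}$ are the natural topological ones: $U \nest W$ when $U$ is isotopic into $W$, $U \perp W$ when $U$ and $W$ are disjoint (and not nested), and $U \pitch W$ otherwise. The bounded diameter property of $\pi_W(x)$ is immediate from the fact that $W$ is a witness, so every vertex $x$ of $\G(S)$ intersects $W$ and hence projects non-trivially.

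Before checking axioms, I would pass to the quasi-isometric model $\ksep_\G(S)$ via Theorem~\ref{theorem: k qi g}, since its explicit generating moves (adding, removing, and flipping curves in non-witness subsurfaces) make it easier to construct multicurves with prescribed projections. Many of the HHS axioms then follow from classical subsurface projection machinery applied to the restricted index set $\mathfrak{S}$: the Behrstock consistency inequality, bounded geodesic image, and the large link lemma are all direct consequences of Masur--Minsky theory; finite complexity of the nesting order is immediate since subsurfaces of $S$ have bounded Euler characteristic; orthogonality and container axioms amount to standard disjointness observations; and the assumption that $R$\hyp{}bounds the intersection of adjacent vertices of $\G(S)$ controls how much the projection $\pi_W$ can change along an edge, giving the Lipschitz-type properties needed.

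The genuinely hard content is partial realization and uniqueness. For partial realization, given compatible target points $p_i \in \C(W_i)$ for pairwise orthogonal or nested witnesses, I would build a multicurve $x$ in $\ksep_\G(S)$ realizing these projections by iteratively performing flip moves inside each $W_i$; the crucial subtlety is ensuring $x$ is an allowed vertex, i.e.\ that no complementary component of $x$ is itself a witness, which uses the flip/add-remove machinery of Definition~\ref{definition: ksep} together with the absence of annular witnesses. Uniqueness—the statement that two multicurves with uniformly close projections in every $\C(W)$ for $W \in \mathfrak{S}$ are close in $\ksep_\G(S)$—is the central geometric claim and I expect it to be the main obstacle, since the hierarchy only sees witnesses rather than all subsurfaces. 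I would prove it by induction on complexity, building a path between the two multicurves in $\ksep_\G(S)$ by the standard hierarchy construction: resolving the projection to a top witness first, then descending to projections inside complementary witnesses, and handling non-witness subsurfaces in complementary regions via the pants graph connectedness used in Claim~3.3 of \cite{vokessep}.
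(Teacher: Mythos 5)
The paper does not prove this theorem; it quotes it directly from \cite[Theorem~1.1]{vokessep}, so the comparison is against that reference rather than any argument in the present paper. Your outline does match the strategy of \cite{vokessep}: the index set is the set of witnesses, the shadow spaces are the curve graphs $\C(W)$, the projections are Masur--Minsky subsurface projections, and the first move is to replace $\G(S)$ by the quasi-isometric model $\ksep_\G(S)$ before verifying the axioms. You have also correctly singled out realization and uniqueness as the genuinely hard axioms, and correctly located the reason: the index set is a restricted family of subsurfaces, so one must show that the hierarchy machinery does not ``leak'' information through non-witness subsurfaces. Two small cautions. First, the uniform bound on $\pi_W(x)$ for $x$ a vertex of $\G(S)$ requires the hypothesis that adjacent vertices of $\G(S)$ intersect boundedly many times, not merely that witnesses see every vertex---this is where axiom (3) of the definition of hierarchical graph of multicurves enters, and you mention this only in passing; it deserves to be flagged as essential rather than as giving ``Lipschitz-type properties.'' Second, the ``handling non-witness subsurfaces via pants graph connectedness'' step in your uniqueness sketch is doing real work: the reason flip moves in non-witness complementary pieces are free is precisely that no witness is contained in such a piece, hence no projection constrains those moves, and the connectedness of the pants graph then lets you realize any required reconfiguration there. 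Your sketch is consistent with the cited proof, but as a blind reconstruction it is (understandably) thin exactly where the original paper does its hardest work.
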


Since  $\sep(S)$ is connected (Theorem~\ref{theorem: sep connected}) and has no annular witnesses (Proposition~\ref{proposition: sep holes}), the definition of the graph ensures it is hierarchical.

The proof of Theorem~\ref{theorem: hierarchial implies HHS} relies on strong connections between $\G(S)$ and $\ksep_\G(S)$ when $\G(S)$ is hierarchical. 

\begin{lemma}[{\cite[Section 3.1]{vokessep}}]\label{lemma:k is hierarchical}
Let $S$ be a surface with positive complexity. If $\G(S)$ is a hierarchical graph of multicurves on $S$, then $\ksep_\G(S)$ is hierarchical and $\Wit\bigl( \ksep_\G(S) \bigr) = \Wit\bigl(\G(S)\bigr)$.
\end{lemma}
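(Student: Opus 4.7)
The plan is to verify the two claims by unpacking definitions. I would prove the witness identity $\Wit\bigl(\ksep_\G(S)\bigr) = \Wit\bigl(\G(S)\bigr)$ first, since combined with the hypothesis that $\G(S)$ is hierarchical this immediately supplies the no-annuli clause in the definition of hierarchical for $\ksep_\G(S)$. The key tool is a simple \emph{monotonicity} property of witnesses: if $W \subseteq W'$ are connected essential subsurfaces with $W' \not\cong S_{0,3}$, and $W$ is a witness for some graph of multicurves, then so is $W'$, because any multicurve meeting $W$ must also meet $W'$.

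For the inclusion $\Wit\bigl(\G(S)\bigr) \subseteq \Wit\bigl(\ksep_\G(S)\bigr)$, fix $W \in \Wit\bigl(\G(S)\bigr) = \mf{S}$ and a vertex $x$ of $\ksep_\mf{S}(S)$. If $x$ were disjoint from $W$, then $W$ would lie inside some component $Z$ of $S \sminus x$. Since $W$ is not $S_{0,3}$, neither is $Z$, and monotonicity forces $Z \in \mf{S}$, contradicting that $x$ is a vertex of $\ksep_\mf{S}(S)$. For the reverse inclusion, I would observe that every vertex $v$ of $\G(S)$ is automatically a vertex of $\ksep_\G(S)$: any component $Z$ of $S \sminus v$ is disjoint from $v$, so if $Z$ were a witness for $\G(S)$ the witness property would fail on $v$. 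Hence any $W \in \Wit\bigl(\ksep_\G(S)\bigr)$ meets every vertex of $\G(S)$, proving $W \in \Wit\bigl(\G(S)\bigr)$.

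Next I would verify the remaining three axioms of hierarchical for $\ksep_\G(S)$. Connectedness is supplied by \cite[Claim 3.3]{vokessep}. The mapping class group preserves $\Wit\bigl(\G(S)\bigr)$ set-wise and so permutes the vertices of $\ksep_\G(S)$; the add/remove and flip moves are defined purely in terms of disjointness and the curve graphs of subsurfaces, hence are MCG-equivariant. For the uniform intersection bound, add/remove moves give intersection $0$, while for a flip replacing $\alpha$ by $\beta$ inside a component $Y_\alpha$ of $S \sminus (x \sminus \alpha)$, the two multicurves agree outside $\{\alpha,\beta\}$, so $i(x,y) = i(\alpha,\beta)$; adjacency of $\alpha$ and $\beta$ in $\C(Y_\alpha)$ gives $i(\alpha,\beta) \leq 2$ under the standard low-complexity conventions for $S_{1,1}$ and $S_{0,4}$, and $0$ otherwise. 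Thus $R = 2$ works.

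No significant obstacle arises. The one subtle point, and the thing to be most careful about, is the forward direction of the witness identity: one must rule out the degenerate case where the complementary component containing $W$ happens to be a three-holed sphere, and this is handled precisely by the convention built into the definition of witness that $W$ itself is not $S_{0,3}$.
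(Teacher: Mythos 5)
The paper does not actually prove this lemma; it is cited verbatim from \cite[Section 3.1]{vokessep}, so there is no in-paper argument to compare against. Evaluated on its own, your reconstruction is sound in overall shape and correctly identifies that the witness identity supplies the no-annuli axiom for free and that the remaining axioms are routine (connectedness from \cite[Claim 3.3]{vokessep}, equivariance of the vertex/edge conditions, and the intersection bound $R=2$ from a flip being supported in a single complementary component).

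There is one small but genuine gap in the forward inclusion $\Wit\bigl(\G(S)\bigr) \subseteq \Wit\bigl(\ksep_\G(S)\bigr)$. You write ``Since $W$ is not $S_{0,3}$, neither is $Z$,'' but this inference does not follow from $W \not\cong S_{0,3}$ alone. A connected essential subsurface $W$ of $S$ sitting inside a three-holed sphere component $Z$ of $S \sminus x$ need not be all of $Z$: it could be an essential annulus whose core is parallel to a boundary curve of $Z$ lying in $x$ (such an annulus is essential in $S$ and can be isotoped off $x$). So excluding only $W \cong S_{0,3}$ leaves the annular case open. What closes it is precisely the hypothesis that $\G(S)$ is hierarchical: axiom (4) says $\Wit\bigl(\G(S)\bigr)$ contains no annuli, so $W$ is not an annulus either, and then the enumeration of essential subsurfaces of $S$ contained in an $S_{0,3}$ component shows $W$ must equal $Z$, forcing $Z \not\cong S_{0,3}$ by contradiction. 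With that sentence added your argument is complete. (The same observation is also what guarantees every element of $\mf{S} = \Wit\bigl(\G(S)\bigr)$ has complexity at least $1$, which is needed for $\ksep_{\mf{S}}(S)$ to be defined at all.)
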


\begin{theorem}[{\cite[Proposition 4.1]{vokessep}}] \label{theorem: k qi g}
Let $\G(S)$ be a hierarchical graph of multicurves on~$S$. The inclusion map $\G(S) \to \ksep_\G(S)$ is a quasi-isometry.
\end{theorem}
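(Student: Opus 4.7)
The plan is to exploit the fact that $\mcg(S)$ acts with finitely many orbits of vertices and of edges on both $\G(S)$ and $\ksep_\G(S)$, together with the connectedness of both graphs.

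Well-definedness of the inclusion is immediate: if $x$ is a vertex of $\G(S)$ and $W \in \Wit(\G)$, then $W$ intersects $x$ by the definition of witness, so no component of $S \sminus x$ can equal $W$, whence $x$ is a vertex of $\ksep_\G(S) = \ksep_{\Wit(\G)}(S)$. For the upper Lipschitz bound, finiteness of $\mcg(S)$-orbits of edges of $\G(S)$ (vertices being multicurves of bounded topological complexity, with only finitely many topological types) reduces the problem to a finite check: pick one edge representative per orbit, and apply connectedness of $\ksep_\G(S)$ (Lemma~\ref{lemma:k is hierarchical}) to see that each representative pair lies at finite $\ksep_\G$-distance. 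The maximum of these finitely many finite numbers yields a uniform upper bound on $d_{\ksep_\G}(x,y)$ for $\G$-adjacent $x,y$.

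For coarse surjectivity and the lower Lipschitz bound, I would construct a coarse inverse $\rho\colon \ksep_\G(S) \to \G(S)$ that is the identity on $\G(S)$ and essentially $\mcg(S)$-equivariant. For each $\mcg(S)$-orbit in $\ksep_\G(S)$ pick a representative $\bar z$: if the orbit meets $\G(S)$, choose $\bar z \in \G(S)$ and set $\rho(\bar z) = \bar z$; otherwise choose any $\rho(\bar z) \in \G(S)$, which exists because $\ksep_\G(S)$ is connected and $\G(S)$ is nonempty. Extend equivariantly. Finitely many orbits yield a uniform bound $d_{\ksep_\G}(z,\rho(z)) \le C$, giving coarse surjectivity, and finitely many $\mcg(S)$-orbits of edges of $\ksep_\G(S)$ combined with the connectedness of $\G(S)$ yield $d_\G(\rho(z), \rho(z')) \le C'$ whenever $z,z'$ are adjacent in $\ksep_\G(S)$. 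The conclusion then follows from $\rho|_{\G(S)} = \mathrm{id}$: for $x,y \in \G(S)$, $d_\G(x,y) = d_\G(\rho(x), \rho(y)) \le C' d_{\ksep_\G}(x,y)$.

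The main technical obstacle is making the equivariant extension of $\rho$ well-defined when representative vertices have nontrivial $\mcg(S)$-stabilizers that fail to fix the chosen images: the naive formula $\rho(g \bar z) = g \rho(\bar z)$ is then ambiguous. My plan is to resolve this by letting $\rho$ be a set-valued coarse map whose value at $z$ is the $\mathrm{Stab}(z)$-orbit of a chosen image, and then verifying that this orbit has uniformly bounded $\G(S)$-diameter. Establishing this diameter bound is the most delicate step; it requires using the specific structure of $\G(S)$ as a graph of multicurves together with standard surgery techniques for controlling how the generators of $\mathrm{Stab}(\bar z)$ (principally Dehn twists along $\bar z$) move nearby vertices of $\G(S)$.
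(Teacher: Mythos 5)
The paper does not prove this statement; it is quoted from the second author's earlier paper \cite[Proposition~4.1]{vokessep}, so there is no internal proof to compare against. Evaluating your proposal on its own terms: the well-definedness argument and the upper Lipschitz bound via finiteness of $\mcg(S)$-orbits of edges of $\G(S)$ are both correct. However, the coarse inverse construction has a genuine gap, and you have misidentified where the difficulty lies.

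The claim that $\mathrm{Stab}(\bar z)\cdot\rho(\bar z)$ has bounded $\G(S)$-diameter is the entire content of your argument, and your plan for it rests on the assertion that the generators of $\mathrm{Stab}(\bar z)$ are ``principally Dehn twists along $\bar z$.'' This is not the case: the stabilizer of a multicurve in $\mcg(S)$ surjects onto the mapping class groups of the complementary components (up to permutation), and in particular contains partial pseudo-Anosov elements supported on each component $Y_i$ of $S\sminus\bar z$. The Dehn twists along $\bar z$ are the easy part --- if you choose $\rho(\bar z)$ disjoint from $\bar z$, they fix it, though note that even this choice is not always available, e.g.\ when $\bar z$ is a pants decomposition of $S$ consisting entirely of non-separating curves, a legitimate vertex of $\ksep_{\sep}(S)$ that is disjoint from no separating curve. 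The real work is bounding the orbit under the partial pseudo-Anosovs. The heuristic reason this should be bounded is that each $Y_i$ is not a witness, so the cyclic subgroup generated by such a pseudo-Anosov fixes some vertex of $\G(S)$ disjoint from $Y_i$ and hence has a bounded orbit of $\rho(\bar z)$. But this only bounds cyclic subgroups one at a time; it does not bound the diameter of the orbit of the full stabilizer, which is what you actually need for both the well-definedness of the set-valued $\rho$ and for your concluding lower-bound inequality (where you additionally rely on $\rho(g\bar z)$ being uniformly close to $g\rho(\bar z)$ when $\bar z$ ranges over edge representatives, not just vertex representatives). Invoking the distance formula for $\G(S)$ here would be circular, since in \cite{vokessep} the hierarchically hyperbolic structure on $\G(S)$ is deduced from this very quasi-isometry. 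The cited proof instead constructs a coarse retraction $\ksep_\G(S)\to\G(S)$ directly, vertex-by-vertex via surgeries, and verifies that edges of $\ksep_\G(S)$ map to uniformly short paths in $\G(S)$, sidestepping the orbit-representative bookkeeping and the stabilizer-diameter problem entirely.
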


The most prominent consequence of hierarchical hyperbolicity is a distance formula in the same style as Masur and Minsky's distance formula for the mapping class group~\cite{mm2}.
For any witness $Y$ for $\G(S)$ and vertices $x, y \in \G(S)$, the subsurface projections of $x$ and $y$ to $\C(Y)$ are non\hyp{}empty, so the distance $d_Y(x,y)$ is defined.

\begin{theorem}[Distance formula; {\cite[Corollary 1.2]{vokessep},\cite[Theorem 4.5]{hhs2}}]
\label{theorem: distance formula} \ \\
Let $\G(S)$ be a hierarchical graph of multicurves on $S$ and $\mf{S} = \Wit(S)$. There exists $\sigma_0$ such that for all $\sigma \geq \sigma_0$, there  are $K\geq 1$, $L\geq 0$ so
that for all $x,y\in \G(S)$,
$$ \frac{1}{K}  \sum_{Y\in \mf{S}}\ignore{d_Y(x,y)}{\sigma} - \frac{L}{K} \leq d_{\G}(x,y) \leq K \sum_{Y\in \mf{S}}\ignore{d_Y(x,y)}{\sigma} +L$$
\noindent where $ \ignore{N}{\sigma} = N$ if $N \geq \sigma$ and $0$ otherwise.
\end{theorem}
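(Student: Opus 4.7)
The plan is to combine two established inputs: the hierarchical hyperbolicity of $\G(S)$ (Theorem~\ref{theorem: hierarchial implies HHS}) with the general distance formula for hierarchically hyperbolic spaces from Behrstock--Hagen--Sisto \cite[Theorem 4.5]{hhs2}. The latter asserts that in any HHS, distances are coarsely computed by summing thresholded projection distances to the Gromov hyperbolic spaces indexed by the HHS domain set. Once both are in place, the stated formula will emerge by matching the abstract HHS data for $\G(S)$ with the concrete subsurface projections to witnesses.

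First, I would apply Theorem~\ref{theorem: hierarchial implies HHS} to endow $\G(S)$ with an HHS structure, with some index set $\mf{T}$ of domains, a Gromov hyperbolic space $\mc{C}U$ for each $U \in \mf{T}$, and a coarse projection from $\G(S)$ to $\mc{C}U$. Plugging this HHS structure into the general HHS distance formula yields the existence of $\sigma_0$ such that for all $\sigma \ge \sigma_0$ there are constants $K \geq 1, L \geq 0$ with
$$ \frac{1}{K} \sum_{U \in \mf{T}} \ignore{d_U(x,y)}{\sigma} - \frac{L}{K} \leq d_{\G}(x,y) \leq K \sum_{U \in \mf{T}} \ignore{d_U(x,y)}{\sigma} + L $$
for all $x,y \in \G(S)$.

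Second, I would unwind the construction of the HHS structure in \cite{vokessep} to identify $\mf{T}$ with $\Wit(\G(S))$: the domains produced there are precisely the witnesses for $\G(S)$, the hyperbolic space associated to each $Y \in \Wit(\G(S))$ is quasi-isometric to the curve graph $\C(Y)$, and the HHS projection onto $\mc{C}U$ agrees coarsely with the Masur--Minsky subsurface projection $\pi_Y$. By the definition of a witness, every vertex of $\G(S)$ intersects $Y$, so $\pi_Y(x)$ and $\pi_Y(y)$ are always non-empty and $d_Y(x,y)$ is well-defined. After absorbing the additive and multiplicative errors from these quasi-isometries into $K$ and $L$ (and possibly enlarging $\sigma_0$), the formula of the previous paragraph becomes exactly the one in the statement.

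The only substantive step is the identification in the second paragraph, and this is built into the HHS structure from \cite{vokessep}: its coordinate maps are literally subsurface projections to witnesses, so no new verification is required here. The remaining content is a direct invocation of the general HHS distance formula together with bookkeeping of constants.
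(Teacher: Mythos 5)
The paper does not give its own proof of this theorem: it is stated as a citation to \cite[Corollary 1.2]{vokessep} and \cite[Theorem 4.5]{hhs2}, and your proposal is a correct unwinding of exactly that citation. You apply the HHS structure from Theorem~\ref{theorem: hierarchial implies HHS}, invoke the general HHS distance formula, and identify the HHS domain set and coordinate projections with the witnesses and Masur--Minsky subsurface projections, which is precisely what \cite[Corollary 1.2]{vokessep} records; this is the same route the paper relies on.
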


Masur and Minsky showed that the curve graph of a surface with positive complexity has infinite diameter by proving the pseudo\hyp{}Anosov elements of $\mcg(S)$ act loxodromically on $\C(S)$. As a consequence of this, pseudo\hyp{}Anosov elements have undistorted orbits in any hierarchical graph of multicurves.

\begin{corollary}\label{corollary:pA_are_undistorted}
Let $S$ be a surface of positive complexity and $\G(S)$ be a hierarchical graph of multicurves  on $S$. Let $W$ be a witness for $\G(S)$  and $\phi \in\mcg(S)$ be a partial pseudo\hyp{}Anosov supported on $W$.
For all $x \in \G(S)$, the map $n \mapsto \phi^n(x)$ is a quasi-isometric embedding of $\mathbb{Z}$ into $\G(S)$. In particular, $\G(S)$ has infinite diameter.
\end{corollary}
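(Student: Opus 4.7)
The plan is to combine the distance formula (Theorem~\ref{theorem: distance formula}) with Masur and Minsky's theorem that pseudo-Anosovs act loxodromically on the curve graph. Since $W$ is a witness, every vertex of $\G(S)$ has non-trivial projection to $\C(W)$, so $d_W(y,z)$ is defined for all $y,z\in\G(S)$. Since $\phi$ is a partial pseudo-Anosov supported on $W$, its restriction to $W$ is a pseudo-Anosov on $W$, and by \cite[Proposition~4.6]{mm1} it acts loxodromically on $\C(W)$. Moreover, because $\phi$ is the identity outside $W$, the subsurface projection to $W$ is $\phi$-equivariant: $\pi_W(\phi^n(x)) = \phi^n(\pi_W(x))$. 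Consequently there exist constants $A>0$, $B\ge0$ so that for every $x \in \G(S)$ and every $n \in \mathbb{Z}$,
\[
d_W(x,\phi^n(x)) = d_W(\pi_W(x),\phi^n(\pi_W(x))) \ge A|n| - B.
\]

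For the lower bound in the quasi-isometric embedding, fix $\sigma\ge\sigma_0$ as in Theorem~\ref{theorem: distance formula} and let $K,L$ be the associated constants. For all sufficiently large $|n|$, the estimate above ensures that $d_W(x,\phi^n(x)) \ge \sigma$, so the $W$-term contributes in full to the sum appearing in the distance formula. Keeping only this single summand gives
\[
d_\G(x,\phi^n(x)) \;\ge\; \tfrac{1}{K}\,d_W(x,\phi^n(x)) - \tfrac{L}{K} \;\ge\; \tfrac{A}{K}|n| - \tfrac{B+L}{K},
\]
which is linear in $|n|$.

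For the upper bound, the definition of a hierarchical graph of multicurves requires $\mcg(S)$ to act on $\G(S)$ by graph automorphisms, hence by isometries. Setting $D \defeq d_\G(x,\phi(x))$, the triangle inequality together with the isometric action yields $d_\G(x,\phi^n(x)) \le D|n|$. Combining the two bounds shows that $n \mapsto \phi^n(x)$ is a quasi-isometric embedding of $\mathbb{Z}$ into $\G(S)$, whose image has infinite diameter.

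The main conceptual step, which requires a moment of care rather than hard work, is the $\phi$-equivariance of the subsurface projection to $W$; this is where we use that $\phi$ is \emph{supported} on $W$ rather than just a general element with some pseudo-Anosov behavior, and it allows the loxodromic action on $\C(W)$ to be transferred directly into a lower bound in the distance formula.
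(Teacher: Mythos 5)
Your proof is correct and follows essentially the same route as the paper: cite Masur--Minsky's loxodromicity result to get a linear lower bound on $d_W(x,\phi^n(x))$, feed it into the lower bound of the distance formula (Theorem~\ref{theorem: distance formula}) keeping only the $W$-term, and use the isometric action plus the triangle inequality for the upper bound. The only small difference is that you make explicit the $\phi$-equivariance of $\pi_W$ (which lets you transport the loxodromic orbit from $\C(W)$), a step the paper leaves implicit when it quotes \cite[Proposition~4.6]{mm1} for the bound $d_W(x,\phi^n(x)) \ge C n$.
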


\begin{proof}
Let $x \in \G(S)$ and $n \in \mathbb{Z}_{>0}$. Without loss of generality, we can restrict to considering the distance between $x$ and $\phi^n(x)$. The upper bound follows from the triangle inequality: \[d_\G\bigl( x, \phi^n(x) \bigr) \leq  \sum\limits_{i=0}^{n-1} d_\G\bigl( \phi^i(x), \phi^{i+1}(x) \bigr) = n\cdot d \bigl( x,\phi(x) \bigr).\]

For the lower bound, \cite[Proposition~4.6]{mm1} provides a $C > 0$ depending only on $S$ so that $d_W\bigl( x, \phi^n(x) \bigr) \geq C \cdot n$ . By the distance formula (Theorem~\ref{theorem: distance formula}), there exist $K\geq 1$, $L\geq 0$ and $\sigma > 0$ so that 
\[ d_\G \bigl( x, \phi^n(x) \bigr) \geq \frac{1}{K} \left(d_W\bigl( x, \phi^n(x) \bigr)-\sigma\right) - \frac{L}{K} \geq \frac{C}{K} \cdot n - \frac{L+\sigma}{K}. \qedhere \]
\end{proof}

All hierarchically hyperbolic spaces come equipped with a system of sub\hyp{}hierarchically hyperbolic spaces called product regions. An advantage to working with $\ksep_\mf{S}(S)$ is that these product regions can be described concretely.

\begin{definition}[Product region of a multicurve]
Let $\mf{S}$ be the set of witnesses for some hierarchical graph of multicurves on $S$. If $m$ is a multicurve on $S$, define $P_\mf{S}(m) = \{ y \in \ksep_\mf{S}(S) : m\subseteq y \}$. We give $P_{\mf{S}}(m)$ the induced metric from $\ksep_\mf{S}(S)$.
\end{definition}

The justification for calling $P_\mf{S}(m)$ a product region lies in the following corollary of the distance formula, which says $P_\mf{S}(m)$ is quasi-isometric to a product of hierarchical graphs of multicurves on the components of $S \sminus m$. 
In the sequel, if $\mf{S}$ is a collection of subsurfaces of $S$ and $Y$ is a subsurface of $S$, then $\mf{S}_Y = \{ Z \in \mf{S} : Z \subseteq Y\}$.

\begin{corollary}\label{corollary:product regions}
Let $\mf{S}$ be the set of witnesses for some hierarchical graph of multicurves on~$S$. If $m$ is a multicurve on $S$ and $S \sminus m = Y_1 \sqcup \dots \sqcup Y_r$, then $P_\mf{S}(m)$ is quasi-isometric to $\prod_{i=1}^r \ksep_{\mf{S}_{Y_i}}(Y_i)$. Moreover, each factor $\ksep_{\mf{S}_{Y_i}}(Y_i)$  has infinite diameter if and only if $Y_i \in \mf{S}$.
\end{corollary}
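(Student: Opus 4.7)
My plan is to construct an explicit map $\Phi\colon P_\mf{S}(m) \to \prod_{i=1}^r \ksep_{\mf{S}_{Y_i}}(Y_i)$ given by restricting a multicurve to each component of $S \sminus m$, and to verify that $\Phi$ is a quasi-isometry using the distance formula of Theorem~\ref{theorem: distance formula}. First I would check that $\Phi$ is well-defined and surjective: every vertex $x \in P_\mf{S}(m)$ decomposes uniquely as $x = m \cup x_1 \cup \dots \cup x_r$ with $x_i$ a multicurve on $Y_i$, and since the components of $Y_i \sminus x_i$ are exactly the components of $S \sminus x$ contained in $Y_i$, none lies in $\mf{S}_{Y_i}$, so each $x_i$ is a vertex of $\ksep_{\mf{S}_{Y_i}}(Y_i)$. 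The inverse assembly $(x_1,\ldots,x_r) \mapsto m \cup x_1 \cup \dots \cup x_r$ shows $\Phi$ is surjective.

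To establish the quasi-isometry, I would compare the distance formula in $\ksep_\mf{S}(S)$ with the product of the distance formulas in each factor. For $x, y \in P_\mf{S}(m)$, partition the sum over witnesses $W$ in Theorem~\ref{theorem: distance formula} into those contained in some $Y_i$ and those meeting $m$. In the latter case, since $m \subseteq x \cap y$, the triangle inequality applied to subsurface projections (each of diameter at most $3$) yields $d_W(x,y) \leq d_W(x,m) + d_W(m,y) \leq 6$, so these terms vanish for any threshold $\sigma > 6$. For $W \subseteq Y_i$, the curves of $x$ outside $Y_i$ are either disjoint from $W$ or lie on $\partial W$, contributing trivially to projection, so $d_W(x,y) = d_W(x_i,y_i)$. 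Summing over $i$ and identifying the witnesses of $\ksep_{\mf{S}_{Y_i}}(Y_i)$ with the witnesses of $\ksep_\mf{S}(S)$ contained in $Y_i$ matches the ambient distance formula with the factorwise formulas up to multiplicative and additive constants, establishing that $\Phi$ is a quasi-isometry with respect to the $\ell^1$ product metric (which is bi-Lipschitz to any reasonable product metric).

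For the infinite-diameter dichotomy, I would argue that $\ksep_{\mf{S}_{Y_i}}(Y_i)$ is infinite if and only if $Y_i$ is itself a witness of the factor graph, equivalently $Y_i \in \mf{S}$. If $Y_i \in \mf{S}$, then $Y_i \in \mf{S}_{Y_i}$ and Corollary~\ref{corollary:pA_are_undistorted}, applied to a partial pseudo-Anosov supported on $Y_i$, produces a quasi-geodesic ray in $\ksep_{\mf{S}_{Y_i}}(Y_i)$. If $Y_i \notin \mf{S}$, then $Y_i \notin \mf{S}_{Y_i}$, so the empty multicurve satisfies the vertex condition for $\ksep_{\mf{S}_{Y_i}}(Y_i)$; since the empty multicurve intersects no subsurface, $\ksep_{\mf{S}_{Y_i}}(Y_i)$ has no witnesses at all, so the distance formula collapses to a uniform additive bound on the diameter.

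The step I expect to be hardest is the identification of witnesses in the middle paragraph: specifically, verifying that the witnesses of $\ksep_\mf{S}(S)$ which sit inside $Y_i$ agree with the witnesses of the factor graph $\ksep_{\mf{S}_{Y_i}}(Y_i)$, so that the two distance-formula sums match term-by-term. This depends on applying Lemma~\ref{lemma:k is hierarchical} both to the ambient hierarchical graph $\G(S)$ and to an induced hierarchical graph on each $Y_i$, and is what makes the ambient distance transfer cleanly to the $\ell^1$ distance on the product.
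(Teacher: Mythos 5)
Your proposal is correct and takes essentially the same approach as the paper: the paper defines the inverse map $\iota(x_1,\dots,x_r) = m \cup x_1 \cup \dots \cup x_r$, applies the distance formula to $P_\mf{S}(m)$ and to each factor, observes that terms for witnesses meeting $m$ vanish above a threshold (via $\pi_W(m)$ being contained in both projections), and matches the remaining terms with the factor sums, then uses Corollary~\ref{corollary:pA_are_undistorted} for the infinite-diameter direction. The only stylistic difference is in the $Y_i \notin \mf{S}$ case, where the paper notes directly that $\mf{S}_{Y_i}=\emptyset$ so the factor is a single point by Definition~\ref{definition: ksep}, rather than arguing via the empty multicurve and the absence of witnesses.
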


\begin{proof}
To simplify notation, we will write $\mf{S}_i$ for~$\mf{S}_{Y_i}$.
When $\mf{S}_i$ is non\hyp{}empty, $\ksep_{\mf{S}_{i}}(Y_i)$ is itself a hierarchical graph of multicurves. By definition, $\mf{S}_i =  \emptyset$ if and only if $Y_i \notin \mf{S}$. Thus, $\ksep_{\mf{S}_{i}}(Y_i)$ is defined to be a point when $ Y_i \notin \mf{S}$ and will be infinite diameter whenever $Y_i \in \mf{S}$ by Corollary~\ref{corollary:pA_are_undistorted}. This proves the final clause.

For the quasi-isometry between $P_\mf{S}(m)$ and $\prod_{i=1}^r \ksep_{\mf{S}_{Y_i}}(Y_i)$, let $\iota \colon \prod_{i=1}^r \ksep_{\mf{S}_{Y_i}}(Y_i) \to \ksep_\mf{S}(S)$ be defined by $ \iota(x_1,\dots,x_r) = x_1 \cup \dots \cup x_r \cup \nolinebreak m$.
The image of this map is exactly $P_\mf{S}(m)$.
We shall use the distance formula (Theorem~\ref{theorem: distance formula}) to show that $\iota$ is a quasi\hyp{}isometric embedding.
We use the notation $A \asymp B$ to denote that there exist $K$ and $L$ such that $\left(A-L\right)/K\le B \le K A + L$.
By taking the maximum of the constants involved, we have that if $A \asymp B$ and $A' \asymp B'$ then $A+A' \asymp B+B'$.

Let $\mc{P}=\prod_{i=1}^r \ksep_{\mf{S}_{i}}(Y_i)$, and $\mathbf{x}=(x_1, \dots, x_r)$, $\mathbf{y}=(y_1, \dots, y_r)$ be elements of~$\mc{P}$.
By taking a sufficiently large $\sigma$, we can apply the distance formula to each factor of~$\mc{P}$ to achieve
\[ d_{\mc{P}}(\mathbf{x}, \mathbf{y}) \asymp \sum_{i=1}^r\sum_{Y\in \mf{S_i}}\ignore{d_Y(x_i,y_i)}{\sigma},\]
where the term corresponding to $\mf{S}_i$ is $0$ if $\mf{S}_i$ is empty.

The subset $P_\mf{S}(m)$ has the metric induced from $\ksep_\mf{S}(S)$, so we have
    \[ d_{P_\mf{S}(m)}\bigl(\iota(\mathbf{x}), \iota(\mathbf{y})\bigr) \asymp \sum_{Y\in \mf{S}}\ignore{d_Y(\iota(\mathbf{x}), \iota(\mathbf{y}))}{\sigma}. \]

We will  show  $d_{P_\mf{S}(m)}(\iota(\mathbf{x}),\iota(\mathbf{y})) \asymp d_\mc{P}(\mathbf{x}, \mathbf{y})$, for constants independent of $\mathbf{x}$ and $\mathbf{y}$, by proving there exists a threshold $\sigma$ for the distance formula so that the non\hyp{}zero terms in the right hand side of the distance formula for $P_\mf{S}(m)$ exactly correspond to the non-zero terms in the right hand side of the distance formula for~$\mc{P}$.

If $m$ intersects $W \in \mf{S}$, then
both $\pi_W(\iota(\mathbf{x}))$ and $\pi_W(\iota(\mathbf{y}))$ contain~$\pi_W(m) \neq \emptyset$, and we have that $\diam(\pi_Y(\iota(\mathbf{x})) \cup \pi_Y(\iota(\mathbf{y})))$ is uniformly bounded. By assuming that the threshold~$\sigma$ of the distance formula is larger than this bound, every term in the distance formula for $P_\mf{S}(m)$ corresponding to such a subsurface~$W$ is~0.
Now suppose $m$ does not intersect $W$.
This happens precisely when $W$ is contained in $Y_i$ for some~$i$ (in particular, this $Y_i$ is in~$\mf{S}$).
In this case, $d_W(\iota(\mathbf{x}), \iota(\mathbf{y}))=d_W(x_i, y_i)$.
\end{proof}

\begin{example}[Product Region]\label{example: productregion}
Let $\mf{S} = \Wit\bigl(\sep(S_g)\bigr)$ and let $m$ be the multicurve dividing $S=S_g$ into two $S_{0,g+1}$ components as shown in Figure~\ref{figure:product region example}.
These components are both witnesses by Proposition~\ref{proposition: sep holes}.
Let $Y_1 \sqcup Y_2 = S \sminus m$.  As no proper subsurface of $Y_i$  is a witness, $\mf{S}_{Y_i} = \{ Y_i\}$. By the distance formula (Theorem~\ref{theorem: distance formula}), this implies $\ksep_{\mf{S}_{Y_i}}(Y_i)$ is quasi-isometric to the curve graph $\C(Y_i)$. Thus, $P_{\mf{S}}(m)$ is quasi-isometric to $\C(Y_1) \times \C(Y_2)$. 

\begin{figure}[h!]
    \centering
   \includegraphics[width=4cm]{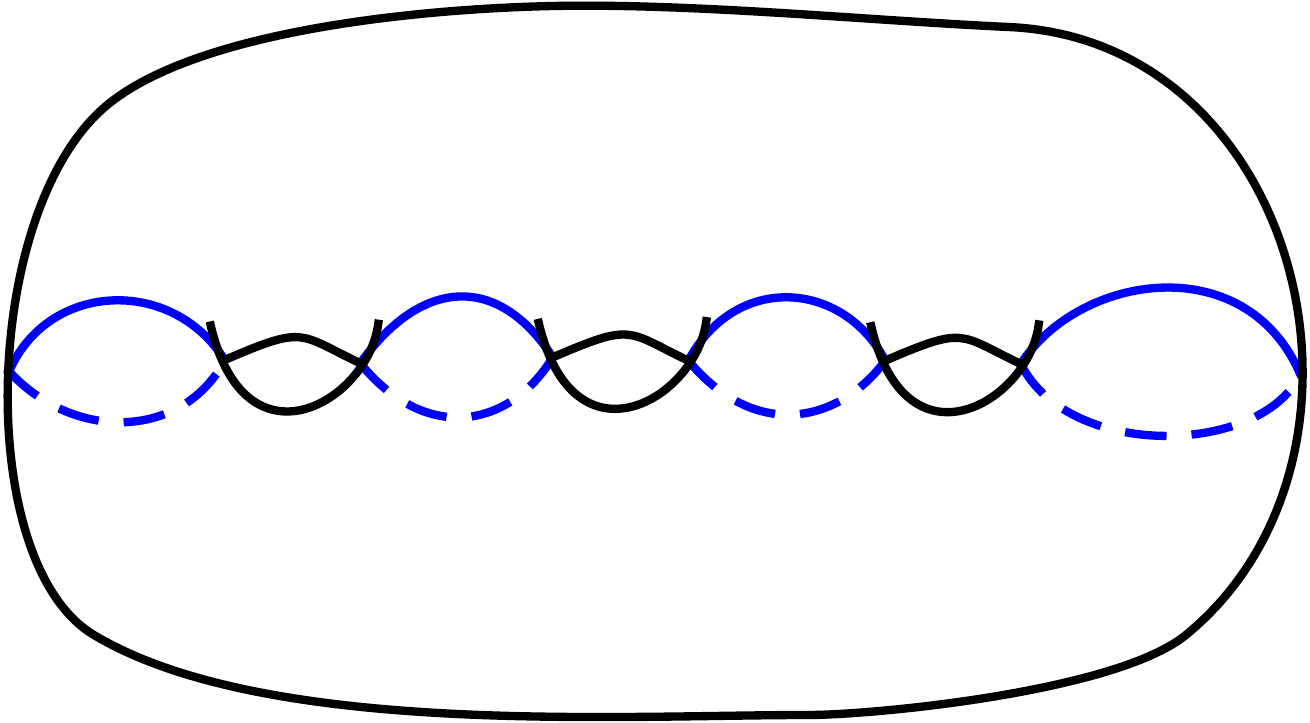}
    \caption{The multicurve $m$ defining $Y_1$ and $Y_2$ in Example~\ref{example: productregion}}. 
    \label{figure:product region example}
\end{figure}
\end{example}

Corollary~\ref{corollary:product regions} tells us that product regions with at least two infinite factors exactly coincide with collections of disjoint witnesses for $\ksep_{\mf{S}}(S)$. As a consequence of hierarchical hyperbolicity, the product regions realize the geometric rank of $\ksep_{\mf{S}}(S)$.
We say that a metric space $X$ has \emph{rank $n$} if $n$ is the largest integer so that there exists a quasi-isometric embedding of $\mathbb{Z}^n$ into $X$.

\begin{corollary}[{\cite[Corollaries 1.4, 1.5]{vokessep}}]\label{cor:rank}
Let $\mf{S}$ be the set of witnesses for some hierarchical graph of multicurves on $S$. The rank of $\ksep_\mf{S}(S)$ is equal to the maximum cardinality of a set of pairwise disjoint elements of $\mf{S}$. Further, $\ksep_\mf{S}(S)$ is hyperbolic if and only if $\mf{S}$ does not contain any pairs of disjoint subsurfaces.

\end{corollary}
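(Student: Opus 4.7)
The plan is to prove both statements together by analyzing quasi-isometrically embedded Euclidean spaces through the product region structure provided by Corollary~\ref{corollary:product regions}.

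For the lower bound on rank, suppose $W_1,\dots,W_n \in \mf{S}$ are pairwise disjoint. Choosing disjoint representatives, I set $m = \bigcup_{i=1}^n \partial_S W_i$, so that each $W_i$ appears as a component of $S \sminus m$. By Corollary~\ref{corollary:product regions}, the product region $P_\mf{S}(m)$ is quasi-isometric to a product of hierarchical graphs of multicurves, with the $n$ factors corresponding to $W_1, \dots, W_n$ all of infinite diameter. For each $i$, I choose a partial pseudo-Anosov $\phi_i$ supported on $W_i$ together with a basepoint $x_i$ in the corresponding factor; by Corollary~\ref{corollary:pA_are_undistorted}, the orbit map $k \mapsto \phi_i^k(x_i)$ is a quasi-isometric embedding of $\mathbb{Z}$ into that factor. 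Since the product metric is bi-Lipschitz to the $\ell^1$-sum of the factor metrics, combining these orbit maps yields a quasi-isometric embedding $\mathbb{Z}^n \to P_\mf{S}(m) \subseteq \ksep_\mf{S}(S)$, so the rank is at least $n$.

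For the upper bound, suppose $\mathbb{Z}^n$ quasi-isometrically embeds into $\ksep_\mf{S}(S)$. By Lemma~\ref{lemma:k is hierarchical} and Theorem~\ref{theorem: hierarchial implies HHS}, $\ksep_\mf{S}(S)$ is a hierarchically hyperbolic space whose index set has the witnesses $\mf{S}$ as its non-annular part, with orthogonality of two witnesses corresponding precisely to disjointness as subsurfaces of $S$. I then invoke the rank bound for hierarchically hyperbolic spaces from \cite{hhs2}, which controls the dimension of any quasi-flat by the maximum cardinality of a pairwise orthogonal collection of domains. This forces $n$ to be at most the maximum number of pairwise disjoint elements of $\mf{S}$, completing the rank formula.

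The hyperbolicity statement follows formally. The forward implication is immediate, since a Gromov hyperbolic space admits no quasi-isometric embedding of $\mathbb{Z}^2$; so if $\ksep_\mf{S}(S)$ is hyperbolic then its rank is $1$ and $\mf{S}$ contains no two disjoint witnesses. The converse is the general fact that a hierarchically hyperbolic space of rank~$1$ is Gromov hyperbolic, a standard consequence of the HHS axioms and the distance formula of Theorem~\ref{theorem: distance formula}. The main technical obstacle is the upper bound on rank, which depends on the HHS quasi-flats theorem and on the identification of orthogonality in the HHS structure on $\ksep_\mf{S}(S)$ with disjointness of witnesses, both of which are developed in \cite{vokessep, hhs2}; the lower bound and the hyperbolicity assertion are then either explicit or formal consequences.
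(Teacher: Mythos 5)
Your proof is correct and takes essentially the same approach as the paper: the lower bound on rank comes from Corollary~\ref{corollary:product regions} together with the undistorted pseudo-Anosov orbits of Corollary~\ref{corollary:pA_are_undistorted}, while the upper bound and the hyperbolicity characterization are the HHS rank and hyperbolicity criteria. The only difference is that the paper black-boxes these last two points by citing \cite[Corollaries~1.4, 1.5]{vokessep} directly, whereas you unpack them into the underlying \cite{hhs2} results (the quasi-flat rank bound and the fact that an HHS without orthogonality is hyperbolic) together with the identification of orthogonality with disjointness of witnesses; this is exactly how those corollaries of \cite{vokessep} are proved, so the substance is the same.
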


\begin{proof}
Let $\{Y_1,\dots,Y_\nu\}$ be a set of  pairwise disjoint elements of $\mf{S}$ with  maximal cardinality and $n$ be the rank of $\ksep_\mf{S}(S)$. By \cite[Corollary 1.4]{vokessep}, $n \leq \nu$. By Corollary \ref{corollary:product regions}, $\prod_{i=1}^\nu \ksep_{\mf{S}_{Y_i}}(Y_i)$ quasi-isometrically embeds into $\ksep_\mf{S}(S)$. Each  $\ksep_{\mf{S}_{Y_i}}(Y_i)$ has rank at least 1 by Corollary \ref{corollary:pA_are_undistorted}, thus $\nu \leq n$. The hyperbolicity statement is precisely \cite[Corollary 1.5]{vokessep}.
\end{proof}

In the case of the separating curve graph, the disjoint witnesses are very restricted and can be described concretely and concisely in all cases.

\begin{proposition} \label{proposition: disjoint witnesses}
Let $S= S_{g,b}$ with $2g+b \geq 4$.  If  $(g,b) \in \{(1,2),(2,0),(2,1)\}$ or $b\ge 3$, then no two witnesses for $\sep(S)$ are disjoint. Otherwise, pairs of disjoint witnesses exist and always have the following form.
\begin{enumerate}
    \item If $b=0$ and $g\ge3$, then a pair of disjoint witnesses is two copies of~$S_{0, g+1}$ that meet along all of their boundary components (Figure~\ref{figure: disjointholes1}).
    \item If $b=2$ and $g\ge2$, then a pair of disjoint witnesses is two copies of~$S_{0, g+2}$ that meet along all but one of their boundary components (Figure~\ref{figure: disjointholes3}).
    \item If $b=1$ and $g\ge3$, then a pair of disjoint witnesses is either: 
    \begin{itemize}
        \item a copy of $S_{0,g+1}$ and a copy of $S_{0,g+2}$ that meet along all the boundary components of the $S_{0,g+1}$ subsurface (Figure~\ref{figure: disjointholes2a});
        \item two copies of $S_{0,g+1}$ that meet along all but one of their boundary components, so that neither contains $\partial S$ (Figure~\ref{figure: disjointholes2}).
    \end{itemize}
\end{enumerate}
In particular, the rank of $\sep(S)$ is~1 if $(g,b) \in \{(1,2),(2,0),(2,1)\}$ or $b \geq 3$, and 2 otherwise.
\end{proposition}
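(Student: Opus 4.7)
My plan is to encode a disjoint pair of witnesses $Y_1,Y_2\subseteq S$ as a finite multigraph and read off the constraints combinatorially. By Proposition~\ref{proposition: sep holes}, both $Y_i$ must be planar, since any genus would have to sit inside a planar component of $S\sminus Y_{3-i}$. I will associate to $(Y_1,Y_2)$ the dual multigraph $\Gamma$ whose vertices are $Y_1$, $Y_2$, and the components $W_1,\dots,W_m$ of $S\sminus(Y_1\cup Y_2)$, and whose edges are the curves in $\partial_S Y_1 \cup \partial_S Y_2$ (each joining the two vertex subsurfaces it cobounds). The first step is to prove, via a standard Euler-characteristic computation, that since every vertex subsurface is planar and $S$ is connected the genus of $S$ equals $b_1(\Gamma)$, and more generally a subgraph of $\Gamma$ assembles to a planar piece if and only if it is a tree. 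The witness condition for $Y_1,Y_2$ then translates to: $\Gamma$ is connected with $b_1(\Gamma)=g$, both $\Gamma-Y_1$ and $\Gamma-Y_2$ are forests, and each tree in these forests carries at most one vertex with a boundary component of $S$.

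Next I will constrain the $W_l$'s. Distinct components of $S\sminus(Y_1\cup Y_2)$ cannot share an interior curve, so each $W_l$ is adjacent only to $Y_1$ and/or $Y_2$ in $\Gamma$; the forest condition then bounds the edge-multiplicity from $W_l$ to each $Y_i$ by one (else a cycle $Y_i-W_l-Y_i$ appears). Essentiality of $W_l$ rules out disks and annuli: an annular $W_l$ incident to $\partial S$ would force an essential boundary curve of some $Y_i$ to be peripheral, which is forbidden; an annular $W_l$ with one boundary on each of $\partial Y_1,\partial Y_2$ can be absorbed by isotoping $Y_1,Y_2$ to share the corresponding curve. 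This leaves exactly one possibility: every $W_l$ is a pair of pants with one boundary on each of $\partial Y_1$, $\partial Y_2$, and $\partial S$.

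Setting $t=m$, letting $p_i$ be the number of $\partial S$-components inside $Y_i$, and $\ell=|\partial_S Y_1 \cap \partial_S Y_2|$, the unique non-trivial component of $S\sminus Y_i$ contains $Y_{3-i}$ together with all $t$ pairs of pants, so the witness condition forces $p_{3-i}+t\le 1$. Together with $b=p_1+p_2+t$, this immediately rules out $b\ge3$. Counting edges and vertices of $\Gamma$ and using $b_1(\Gamma)=g$ then gives $\ell = g+1-t$ and $Y_i\cong S_{0,g+1+p_i}$. A short case analysis over the admissible tuples $(t,p_1,p_2)$, combined with the requirement that no $Y_i$ be $S_{0,3}$, will reproduce exactly the four configurations listed in the proposition and show that no disjoint witnesses exist when $(g,b)\in\{(1,2),(2,0),(2,1)\}$. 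The rank statement then follows directly from Corollary~\ref{cor:rank}.

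The most delicate step will be the $W_l$-classification: carefully verifying that annular $W_l$'s spanning $\partial Y_1$ and $\partial Y_2$ may be absorbed without altering the isotopy classes of the witness pair, and that the ban on peripheral curves really does forbid annular $W_l$'s touching $\partial S$. Once those topological reductions are secured, the remaining argument is routine combinatorial bookkeeping with the dual graph $\Gamma$.
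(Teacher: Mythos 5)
Your approach is correct, but it is genuinely different from the paper's. The paper argues directly from the topology: it fixes a \emph{minimal} witness $W$, shows $S \sminus W$ must be connected (any second component would have to be a disk or peripheral annulus), concludes $Y := S \sminus W$ is planar with at most one boundary component of $S$, reads off the three configurations, and then separately analyzes which proper subsurfaces of $Y$ can also be witnesses to obtain the fourth configuration when $b=1$. Your dual-graph approach replaces the minimality hypothesis with a global bookkeeping device: after noting both $Y_i$ are planar (each lives in a planar complementary component of the other) and building $\Gamma$, the identity $g = b_1(\Gamma)$, the forest constraints, and the bound on $\partial S$-components force every $W_l$ to be a pair of pants with one foot on each of $\partial Y_1$, $\partial Y_2$, $\partial S$, after which the enumeration over $(t,p_1,p_2)$ with $t \le 1$ and $p_i + t \le 1$ falls out mechanically, and $b \ge 3$ is ruled out as a by-product. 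Your version makes the case enumeration more systematic at the cost of the preliminary reductions (planarity, multiplicity bounds, absorbing annular $W_l$'s across $\partial Y_1, \partial Y_2$ -- which you correctly flag as delicate but which is fine since the absorption is an isotopy of the pair), whereas the paper's argument is shorter but must treat the non-minimal $b=1$ case separately.

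Two small points to tighten. First, ``each tree carries at most one vertex with a boundary component of $S$'' should read ``each tree carries at most one boundary component of $S$ in total''; a single vertex piece could a priori carry several, and the correct inequality $p_{3-i} + t \le 1$ that you later use accounts for this. Second, the ``in particular'' clause on rank needs the observation (implicit in your $W_l$-classification) that no three witnesses can be pairwise disjoint: a third would have to sit inside a single pair of pants $W_l$, which has no essential non-pants subsurfaces. State this before invoking Corollary~\ref{cor:rank}, since the corollary gives rank equal to the \emph{maximal} cardinality of a pairwise disjoint family, not merely the existence of a disjoint pair.
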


\begin{figure}[h!]
\centering
\begin{subfigure}[b]{0.22\textwidth}
\centering
\includegraphics[width=\textwidth]{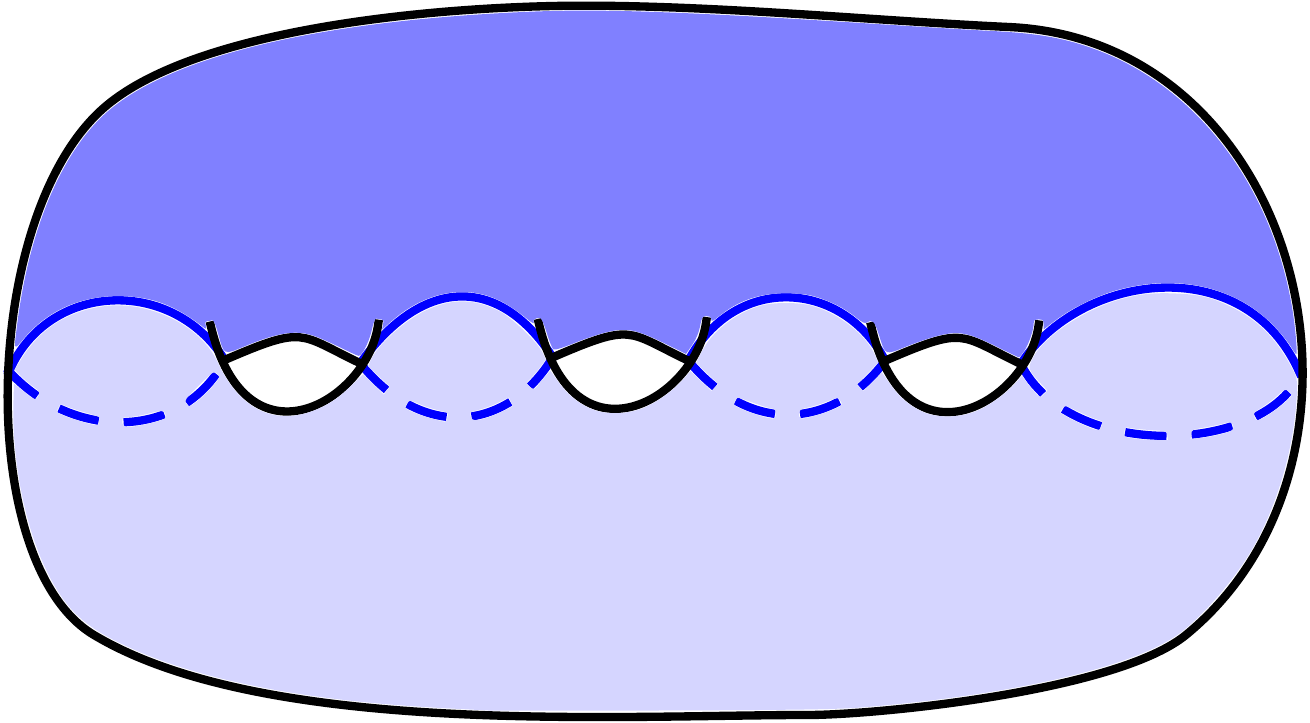}
\subcaption{}
\label{figure: disjointholes1}
\end{subfigure}
\enspace
\begin{subfigure}[b]{0.22\textwidth}
\centering
\includegraphics[width=\textwidth]{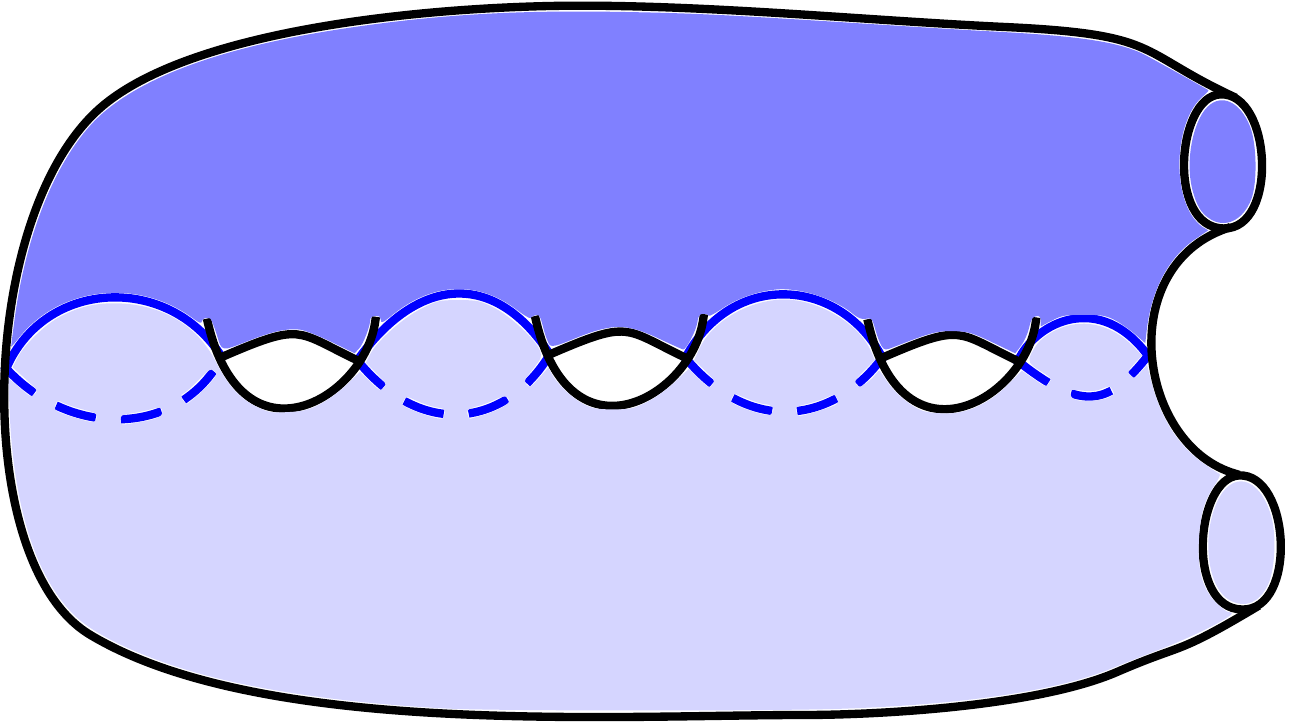}
\subcaption{}
\label{figure: disjointholes3}
\end{subfigure}
\enspace
\begin{subfigure}[b]{0.23\textwidth}
\centering
\includegraphics[width=\textwidth]{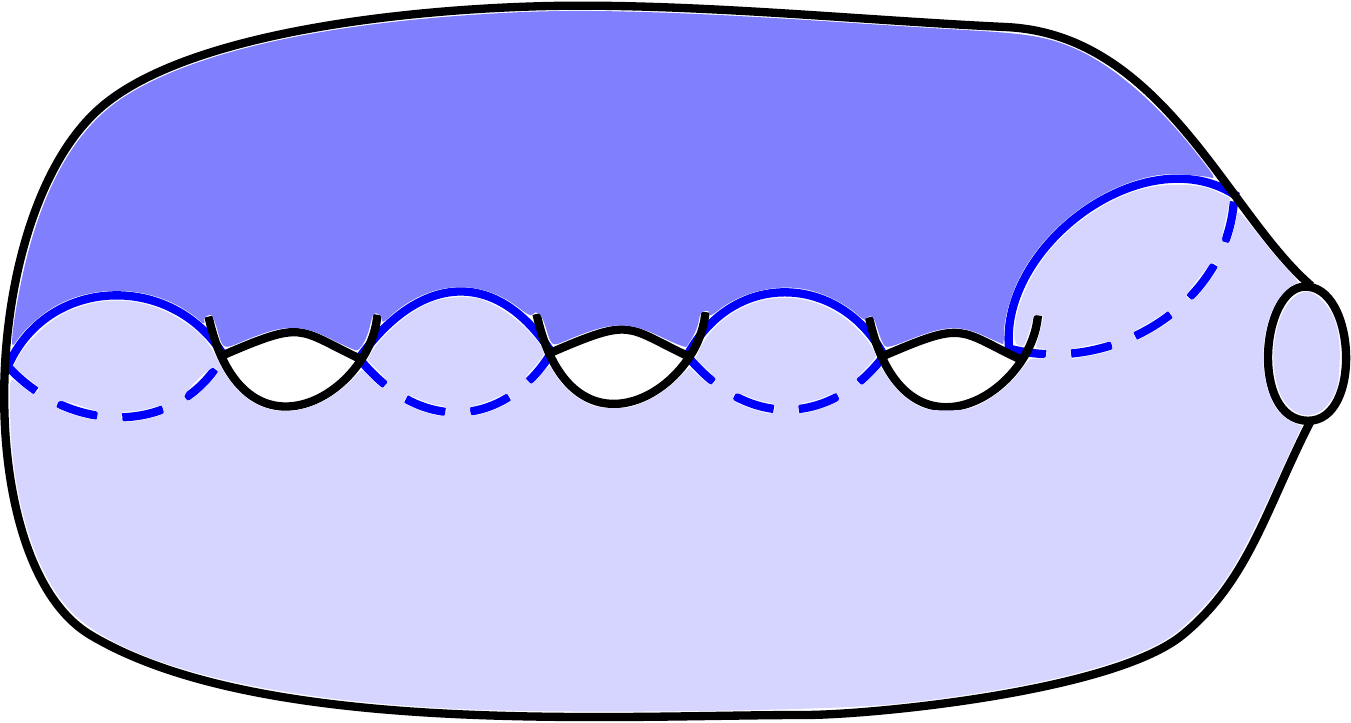}
\subcaption{}
\label{figure: disjointholes2a}
\end{subfigure}
\enspace
\begin{subfigure}[b]{0.23\textwidth}
\centering
\includegraphics[width=\textwidth]{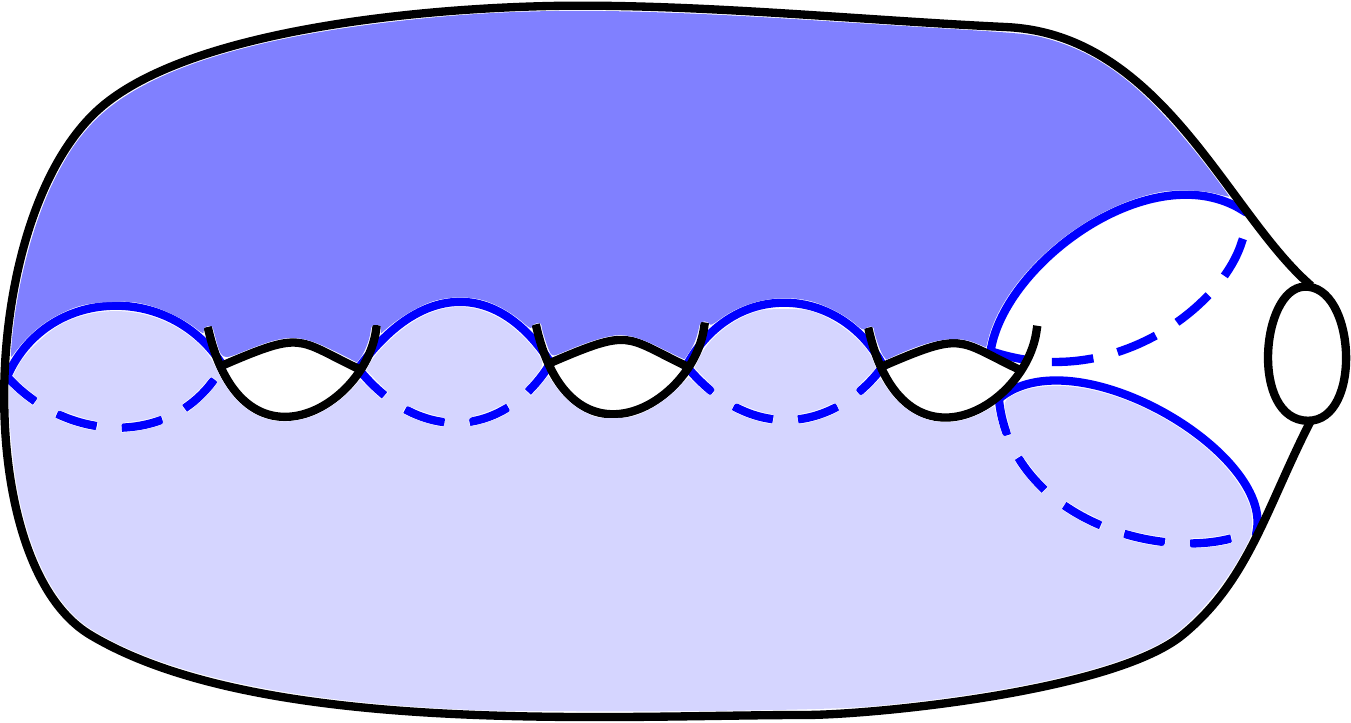}
\subcaption{}
\label{figure: disjointholes2}
\end{subfigure}
\caption{The possibilities for pairs of disjoint witnesses for $\sep(S)$ for $S_{3,0}$, $S_{3, 2}$ and $S_{3,1}$ up to the action of the mapping class group.}
\label{figure: disjointholes}
\end{figure}

\begin{proof}

Recall, Proposition~\ref{proposition: sep holes} established that $\Wit\bigl(\sep(S)\bigr)$ is the set of all positive complexity connected subsurfaces $W$ so that each component of $S\sminus W$ is planar and contains at most one boundary component of $S$.

First suppose that~$S=S_{g,b}$ with $(g,b) \in \{(1,2),(2,0),(2,1)\}$.
For $S_{1,2}$, there are no pairs of disjoint, positive complexity subsurfaces.
For the other two cases, if two positive complexity subsurfaces are disjoint, they must both contain genus.
Since each of the subsurfaces is contained in the complement of the other, neither can be a witness for~$\sep(S)$.

Now suppose that $S$ has at least three boundary components.
Suppose $W,Y \in \Wit\bigl( \sep(S) \bigr)$ are disjoint.
Since any connected subsurface containing $Y$ is also a witness for $\sep(S)$, we can assume that $Y$ is a component of~$S \sminus W$.
In particular,  $S \sminus Y$ is connected and $Y$ is a planar subsurface that contains at most one boundary component of~$S$.
Since $b \geq 3$, this implies $S \sminus Y$ contains at least  two boundary components of~$S$.
However, this contradicts that $Y$ is in $\Wit\bigl(\sep(S)\bigr)$, and hence, we have that $\Wit\bigl( \sep(S) \bigr)$ contains no pairs of disjoint subsurfaces.

Now suppose that~$S=S_{g,b}$, with $b\le2$ and $(g,b) \not \in \{(1,2),(2,0),(2,1)\}$.
Let $W$ and~$Y$ be disjoint subsurfaces in $\Wit\bigl(\sep(S)\bigr)$.
Assume that $W$ is \emph{minimal}, in the sense that no proper subsurface of $W$ is a witness for $\sep(S)$, and assume that $Y$ is a component of~$S \sminus W$. We claim $S \sminus W$ is connected, implying $S \sminus W = Y$.

Suppose that~$S \sminus W$ is disconnected, and let~$Z$ be a component of~$S \sminus W$ that is not~$Y$.
If $\partial Z \cap \partial W$ contains more than one curve, then $S \sminus Y$ has positive genus, which contradicts that $Y$ is in $\Wit\bigl(\sep(S)\bigr)$.
Hence, $Z$ meets $W$ along a single curve.
{However, we also know that $Z$ is planar and contains at most one component of~$\partial S$.}
Thus,~$Z$ must be a disk or a peripheral annulus, contradicting that~$W$ is essential.
Therefore, $S \sminus W$ is connected and equal to $Y$.

Since $W$ and $Y$ are both witnesses and $S \sminus W = Y$, each of~$W$ and~$Y$ must be planar and must contain at most one component of~$\partial S$. Since we are assuming that $W$ is minimal,  we have precisely one possibility for what $W$ and $Y$ can be for each of $S_{g,0}$, $S_{g,1}$ and~$S_{g,2}$:
\begin{itemize}
    \item if $S=S_{g,0}$, then $W$ and $Y$ are both copies of~$S_{0, g+1}$ that meet along all of their boundary components (Figure~\ref{figure: disjointholes1});
    \item if $S=S_{g, 2}$, then $W$ and $Y$ are both copies of~$S_{0, g+2}$ that meet along all but one of their boundary components (Figure~\ref{figure: disjointholes3});
    \item if $S=S_{g,1}$, then $W$ is a copy of~$S_{0,g+1}$ and $Y$ is a copy of~$S_{0,g+2}$ with $W$ and $Y$ meeting along all boundary components of $W$ (Figure~\ref{figure: disjointholes2a}).
\end{itemize}

The subsurfaces $W$ and $Y$ described above are indeed witnesses for~$\sep(S)$ as they will always have positive complexity for the surfaces $S$ we are considering.

When $b=0$ or $b=2$, there is an element of $\mcg(S)$ that maps $W$ to $Y$. Since we assumed that no proper subsurface of $W$ is in $\Wit\bigl(\sep(S)\bigr)$, the same is true of~$Y$. 
In particular,  the maximal cardinality of a set of pairwise disjoint elements of $\Wit\bigl(\sep(S)\bigr)$ is~2, and, up to the action of $\mcg(S)$, there is a unique pair of disjoint witnesses for $\sep(S)$. 

In the case of $S_{g,1}$, $Y$ is not minimal as it contains a subsurface $W'$ that is the image of $W$ under some element of $\mcg(S)$, and hence a witness for $\sep(S)$.
We claim the only subsurfaces of $Y$ that are witnesses for $\sep(S)$ are obtained by removing a single pair of pants containing $\partial S$ from $Y$.

Let $Z$ be a proper connected subsurface of~$Y$. Since $Y$ is planar, $Z$ must also be planar, and can be obtained from $Y$ by successively removing copies of $S_{0,3}$. Let $P$ be the first copy of $S_{0,3}$ removed from $Y$ to move towards $Z$. If $P$ does not contain the boundary component of $S$, then $\partial P$ contains two disjoint curves of $\partial W$. This implies that $S \sminus Z$ has positive genus, and hence that $Z$ is not a witness for $\sep(S)$.  If $P$ does contain the boundary component of $S$, then $Y \sminus P$ is in the mapping class group orbit of $W$ and hence minimal. Therefore, a subsurface $Z$ of~$Y$ is an element of $\Wit\bigl(\sep(S_{g,1})\bigr)$ if and only if it is obtained from $Y$ by removing a single pair of pants containing ~$\partial S$.
Since a copy of $S_{0,3}$ is never a witness, we again find that sets of pairwise disjoint witnesses for $\sep(S_{g,1})$ have cardinality at most~2. Moreover, up to the action of $\mcg(S)$, there are exactly the two possibilities stated in the proposition.
\end{proof}

The restrictions on the disjoint witnesses for $\sep(S)$ have direct implications for the (relative) hyperbolicity of the graph. The second author concluded hyperbolicity in  the case of $b\geq 3$ or $(g,b) \in \{(1,2),(2,0),(2,1)\}$ from the lack of disjoint witnesses~\cite{vokessep}, and the unique form of the disjoint witnesses in the case where $b =0$ or $b=2$ is an essential component of the first author's proof of relative hyperbolicity~\cite{russell}.

\section{Thickness of the Separating Curve Graph} \label{section: thickness}

We now use the hierarchy structure and the capping map to show that $\sep(S_{g,1})$ is a thick metric space when $g \geq 3$. For the remainder of this section, let $S = S_{g,1}$ with $g \geq 3$, and let $\Sigma = S_{g,0}$ be the surface obtained from $S$ by capping off the boundary component with a disk. For convenience, we shall use $\mf{X}$ to denote the set of witnesses for $\sep(S)$, $\ksep(S)$ to denote $\ksep_\mf{X}(S)$, and $P(m)$ to denote $P_\mf{X}(m)$ for any multicurve $m$ on $S$. If $A$ is a subset of a metric space $X$, then $\mc{N}_C(A)$ will denote the $C$\hyp{}neighborhood of $A$ in~$X$.

\begin{definition}[Thick metric space]
A metric space $X$ is \emph{thick of order~0} if none of its asymptotic cones have cut points. In particular,  $X$ will be thick of order~0 if $X$ is quasi-isometric to a product of two infinite diameter metric spaces.

A metric space $X$ is \emph{thick of order at most~$n$} if there exists a constant $C\geq 0$ and a collection of subsets $\{P_\alpha\}_{\alpha \in I}$ such that the following hold.

\begin{itemize}
    \item (Thickness) Each $P_\alpha$ is thick of order at most~$n-1$.
    \item (Coarsely Covering)  The space $X$ is contained in the $C$\hyp{}neighborhood of $\bigcup_{\alpha \in I} P_\alpha$.
    \item (Thick Chaining) For any $P_\alpha$ and $P_{\alpha'}$ there exists a sequence $$P_\alpha = P_0, P_1, \dots, P_k = P_{\alpha'}$$ such that $\mc{N}_C(P_i) \cap \mc{N}_C(P_{i+1}) $ has infinite diameter for all $0\leq i \leq k-1$.
\end{itemize}
\end{definition}

Behrstock, Dru\c{t}u, and Mosher established that if $X$ is thick of any order, then $X$ cannot be relatively hyperbolic \cite[Corollary~7.9]{Behrstock_Drutu_Mosher_Thickness}. Since  the order of thickness is a quasi-isometry invariant \cite[Remark~7.2]{Behrstock_Drutu_Mosher_Thickness}, to prove that $\sep(S)$ is thick of order at most~2, it will be sufficient to prove the same for $\ksep(S)$.

\begin{theorem}\label{theorem: sep is thick}
For all $g \geq 3$, $\sep(S_{g,1})$ is thick of order at most~$2$. In particular, $\sep(S_{g,1})$ is not relatively hyperbolic.
\end{theorem}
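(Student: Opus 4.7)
The plan is to stratify $\ksep(S)$, which is quasi\hyp{}isometric to $\sep(S)$ by Theorem~\ref{theorem: k qi g}, into a three\hyp{}layer thick structure. At the base, the thick of order $0$ subsets are the product regions $P(m)$ for multicurves $m$ such that two components of $S \sminus m$ are witnesses for $\sep(S)$. By Proposition~\ref{proposition: disjoint witnesses}(3), these witness components form either a Type A pair $(S_{0,g+1}, S_{0,g+2})$ or a Type B pair of two $S_{0,g+1}$'s, and in each case Corollary~\ref{corollary:product regions} combined with Corollary~\ref{corollary:pA_are_undistorted} realizes $P(m)$ as a quasi\hyp{}isometric product with two infinite diameter factors, so $P(m)$ has no cut points in any asymptotic cone.

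For the middle layer, for each witness $W$ of $\sep(S)$ I define
\[T_W \defeq \bigcup \bigl\{ P(m) \colon P(m) \text{ is a thick of order } 0 \text{ subset with } W \text{ as one of its witness components} \bigr\}.\]
Every $m$ contributing to $T_W$ contains $\partial_S W$, so for any two such $m, m'$, the union $m \cup m'$ also contains $\partial_S W$ and keeps $W$ as an intact component of $S \sminus (m \cup m')$. By Corollary~\ref{corollary:product regions}, $P(m \cup m')$ has $\ksep_{\mf{X}_W}(W)$ as a product factor, which is infinite diameter by Corollary~\ref{corollary:pA_are_undistorted}. The containment $P(m \cup m') \subseteq P(m) \cap P(m')$ gives every pair of thick of order $0$ subsets inside $T_W$ infinite coarse intersection, so $T_W$ is thick of order at most $1$ with an immediate length\hyp{}one chaining.

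At the top layer, I must show $\{T_W\}_W$ coarsely covers $\ksep(S)$ and can itself be thickly chained. For the coarse cover, since $\mcg(S)$ acts on $\ksep(S)$ by isometries and the separating curves on $S_{g,1}$ split into finitely many mapping class group orbits indexed by $(h, g - h)$ with $1 \leq h \leq g - 1$, it suffices to show that for each orbit some representative is within uniformly bounded $\ksep(S)$\hyp{}distance of a multicurve containing $\partial_S W$ for some witness $W$, achievable with a bounded number of add/remove/flip moves. For the chaining, given $T_W$ and $T_{W'}$, I construct a chain of witnesses $W = W_0, \dots, W_k = W'$ such that each consecutive pair $(W_i, W_{i+1})$ is a disjoint witness pair in the sense of Proposition~\ref{proposition: disjoint witnesses}(3); then $P(\partial_S W_i \cup \partial_S W_{i+1})$ is a thick of order $0$ subset contained in both $T_{W_i}$ and $T_{W_{i+1}}$, supplying the required infinite coarse intersection. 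Constructing this chain is the heart of the proof and uses the capping map $F \colon \sep(S) \to \sep(\Sigma)$ for $\Sigma = S_{g,0}$, following the strategy of \cite[Proposition~3]{Brock_Masur_WP_Low_complexity}. When $F(W) = F(W')$, the witnesses $W$ and $W'$ are related by $\Push(S)$, and I interpolate by successive simple point\hyp{}pushes, choosing each to perturb only one or two curves of $\partial_S W_i$ so that consecutive $W_i$'s remain disjoint. When $F(W) \neq F(W')$, the simpler disjoint witness structure on $\Sigma$ from Proposition~\ref{proposition: disjoint witnesses}(1) provides a chain in $\sep(\Sigma)$, which lifts to $\sep(S)$ by selecting disjoint representatives within the fibers of $F$.

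The main obstacle I anticipate is the top\hyp{}layer chaining: the rigidity of Proposition~\ref{proposition: disjoint witnesses}(3) means the lifts from $\Sigma$ to $S$ must be performed carefully to retain disjointness at every step, requiring case work on Type A versus Type B configurations and a detailed understanding of how $\Push(S)$ moves witness\hyp{}boundary multicurves. This is precisely where the fibers of the capping map illuminate the geometry of $\sep(S_{g,1})$.
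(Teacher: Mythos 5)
Your thick of order~$0$ pieces match the paper's, and you correctly sense that the capping map $F$ must enter, but both the middle and top layers of your decomposition have genuine gaps, and the top\hyp{}layer gap is fatal to the strategy as proposed.

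For the middle layer, your length\hyp{}one chaining inside $T_W$ rests on passing from $P(m), P(m')$ to $P(m \cup m')$. But $m$ and $m'$ need not be disjoint: both contain $\partial_S W$, but their remaining curves live in $S \sminus W$ and can intersect each other, in which case $m \cup m'$ is not a multicurve, $P(m \cup m')$ is undefined, and $P(m) \cap P(m')$ is simply empty. So $T_W$ is not shown to be thick of order~$1$ by this argument. The paper sidesteps this by chaining through a graph $\DW$ of multicurves with add/delete edges, so that at each step one of the two multicurves contains the other, and then grouping by the fibers $F^{-1}(\mu)$, which are shown to be the connected components of~$\DW$.

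The top\hyp{}layer chaining is the more serious problem. You propose to chain $T_W$ to $T_{W'}$ through a sequence of witnesses $W = W_0, \dots, W_k = W'$ in which each consecutive pair is a disjoint witness pair in the sense of Proposition~\ref{proposition: disjoint witnesses}(3). But for \emph{every} disjoint witness pair $(V, V')$ in $S_{g,1}$ --- Type A or Type B --- the images $F(V)$ and $F(V')$ are \emph{complementary} $S_{0,g+1}$ subsurfaces of $\Sigma$: in Type A this is immediate, and in Type B the two distinct boundary curves of $V$ and $V'$ become isotopic once $\partial S$ is capped, so again $F(V') = \Sigma \sminus F(V)$. Consequently, along any chain of consecutively disjoint witnesses, $F(W_i)$ alternates between $F(W_0)$ and $\Sigma \sminus F(W_0)$ and never moves. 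Your appeal to ``the simpler disjoint witness structure on $\Sigma$'' from Proposition~\ref{proposition: disjoint witnesses}(1) cannot save this: in $\Sigma = S_{g,0}$ each witness is disjoint from exactly one other witness (its complement), so the witness\hyp{}disjointness graph on $\Sigma$ is a perfect matching with no nontrivial chains --- this is exactly why $\sep(S_{g,0})$ is relatively hyperbolic rather than thick. So there is nothing to lift. What you have really shown is that the graph of disjoint\hyp{}witness chaining breaks up into pieces indexed by unordered pairs $\{V, \Sigma \sminus V\}$, which is essentially the same obstruction the paper identifies when it proves $\DW$ has components $F^{-1}(\mu)$, $\mu \in \FO$.

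The paper's resolution is to abandon the requirement that the top\hyp{}layer intersections contain a whole product region. Instead it shows that if $\mu, \mu' \in \FO$ have $i(\mu, \mu') \le 4$ on $\Sigma$, then one can find $x \in P(m)$, $x' \in P(m')$ with $F(m) = \mu$, $F(m') = \mu'$, and $i(x,x')$ uniformly bounded, hence $d_{\ksep(S)}(x,x') \le C$; it then feeds the whole picture through a pseudo\hyp{}Anosov $\phi \in \Push(S)$, which stabilizes both $\mc{X}(\mu)$ and $\mc{X}(\mu')$ because $F \circ \phi = F$, and has an undistorted orbit by Corollary~\ref{corollary:pA_are_undistorted}. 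That orbit provides the infinite\hyp{}diameter coarse intersection, and the connectivity of the graph $\F$ on $\FO$ (with edges given by small intersection number) supplies the chain. This pseudo\hyp{}Anosov/push mechanism is the missing ingredient in your proposal: without it there is no way to produce infinite\hyp{}diameter intersections between $T_W$ and $T_{W'}$ when $F(\partial_S W)$ and $F(\partial_S W')$ are not complementary in~$\Sigma$.
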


To show that $\ksep(S)$ is thick of order at most~$2$, we need to build thick of order at most~1 subsets that coarsely cover $\ksep(S)$ and that can be thickly chained together.  The thick of order at most~1 subsets in turn need to be built from thick of order~0 subsets which can be thickly chained together. These thick of order~0 pieces will come from the product regions corresponding to pairs of disjoint witnesses for $\sep(S)$. 

\begin{lemma}\label{lem:product_regions_are_thick}
Let $W$ and $Y$ be a pair of disjoint witnesses for $\sep(S)$. If $m= \partial W \cup \partial Y$, then $P(m)$ is quasi\hyp{}isometric to a product of two infinite diameter metric spaces, and hence thick of order~0.
\end{lemma}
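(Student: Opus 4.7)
The plan is to apply Corollary~\ref{corollary:product regions}, which identifies $P(m)$ up to quasi\hyp{}isometry with $\prod_i \ksep_{\mf{X}_{Z_i}}(Z_i)$ over the components $Z_i$ of $S \sminus m$, together with the explicit description of $(W,Y)$ provided by Proposition~\ref{proposition: disjoint witnesses}(3). All the geometric input is already in place; the task is to identify $S \sminus m$ in each case and verify that exactly two factors have infinite diameter.

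First I would invoke Proposition~\ref{proposition: disjoint witnesses}(3) to split into two cases. In the first case, $W \cong S_{0,g+1}$ and $Y \cong S_{0,g+2}$ meet along all $g+1$ boundary curves of $W$, and a short Euler characteristic check confirms that the unique remaining boundary component of $Y$ must be $\partial S$, so $W \cup Y = S$ and $S \sminus m = W \sqcup Y$. In the second case, $W, Y \cong S_{0,g+1}$ meet along $g$ of their boundary curves and neither contains $\partial S$; a similar calculation shows that $S \sminus (W \cup Y)$ is a single pair of pants $P$ whose boundary consists of $\partial S$ together with the two unshared boundary curves of $W$ and $Y$, giving $S \sminus m = W \sqcup Y \sqcup P$.

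Next I would apply Corollary~\ref{corollary:product regions} to these decompositions. In both cases, $W, Y \in \mf{X}$, so by the final clause of that corollary the factors $\ksep_{\mf{X}_W}(W)$ and $\ksep_{\mf{X}_Y}(Y)$ have infinite diameter. In the second case the additional factor $\ksep_{\mf{X}_P}(P)$ is trivial: $S_{0,3}$ is excluded from $\Wit\bigl(\sep(S)\bigr)$ by Proposition~\ref{proposition: sep holes}, hence $\mf{X}_P = \emptyset$ and $\ksep_{\mf{X}_P}(P)$ is a single point. In either case, $P(m)$ is quasi\hyp{}isometric to $\ksep_{\mf{X}_W}(W) \times \ksep_{\mf{X}_Y}(Y)$, a product of two infinite diameter metric spaces, which is thick of order~0 by definition.

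I do not expect any substantial obstacle here; the lemma is essentially a direct combination of Proposition~\ref{proposition: disjoint witnesses} and Corollary~\ref{corollary:product regions}. The only piece of bookkeeping is the case analysis to rule out additional components of $S \sminus m$ beyond $W$, $Y$, and (in the second case) a single pair of pants, and to check that this pair of pants contributes only a trivial factor to the product region.
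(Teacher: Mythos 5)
Your proposal is correct and follows essentially the same route as the paper: combine Proposition~\ref{proposition: disjoint witnesses}(3) with Corollary~\ref{corollary:product regions} to identify $P(m)$ with a product whose only infinite-diameter factors come from $W$ and $Y$. The paper states more briefly that, by the classification, $W$ and $Y$ are the only components of $S \sminus m$ that are witnesses; your explicit two-case topological check (in particular, that the leftover piece in the second case is a pair of pants, which is never a witness) simply spells out what the paper leaves to the reader.
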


\begin{proof}
By the classification of pairs of disjoint witnesses for $\sep(S)$ given in Proposition~\ref{proposition: disjoint witnesses}(3), the only components of $S\sminus m$ that are witnesses are $W$ and~$Y$.
Hence, by Corollary~\ref{corollary:product regions}, $P(m)$ is quasi-isometric to $\ksep_{\X_W}(W) \times \ksep_{\X_Y}(Y)$. Since $W,Y \in \Wit\bigl(\sep(S)\bigr)$, $\ksep_{\X_W}(W)$ and $\ksep_{\X_Y}(Y)$ are both infinite diameter.
\end{proof}

To understand how the product regions chain together, we use the following graph, and show the edge relation encodes the thick intersection between product regions.

\begin{definition}[Graph of disjoint witnesses]
Let $\DW$ be the graph whose vertices are multicurves $m$ on $S$ so that $S \sminus m$ contains a pair of disjoint witnesses for $\sep(S)$. Two multicurves $m,n \in \DW$ are joined by an edge if $n$ can be obtained from $m$ by adding or deleting a single curve.
\end{definition}

By Proposition~\ref{proposition: disjoint witnesses}(3), the vertices of $\DW$ come in two types up to homeomorphism (see Figure~\ref{figure: vertices of dw}).

\begin{figure}[h!]
\centering
\begin{subfigure}{0.23\textwidth}
\centering
\includegraphics[width=\textwidth]{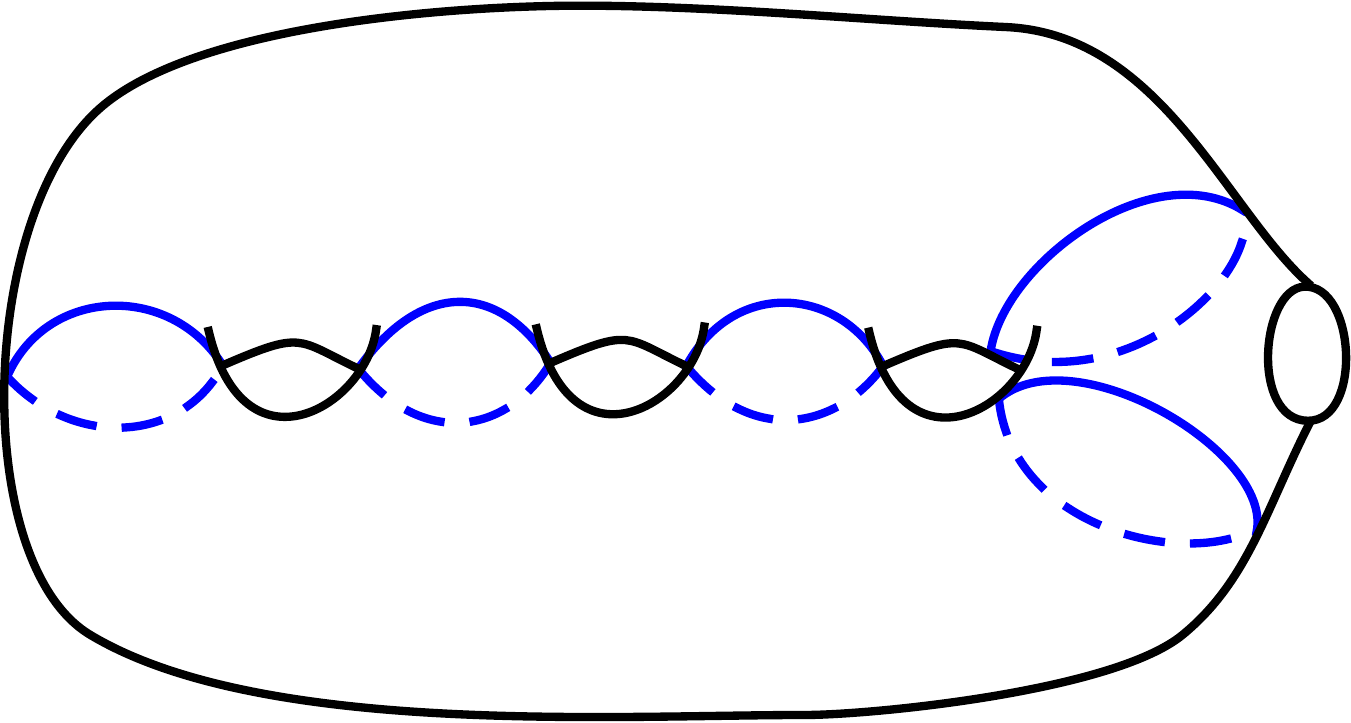}
\subcaption{}

\end{subfigure}
\hspace{1cm}
\begin{subfigure}{0.23\textwidth}
\centering
\includegraphics[width=\textwidth]{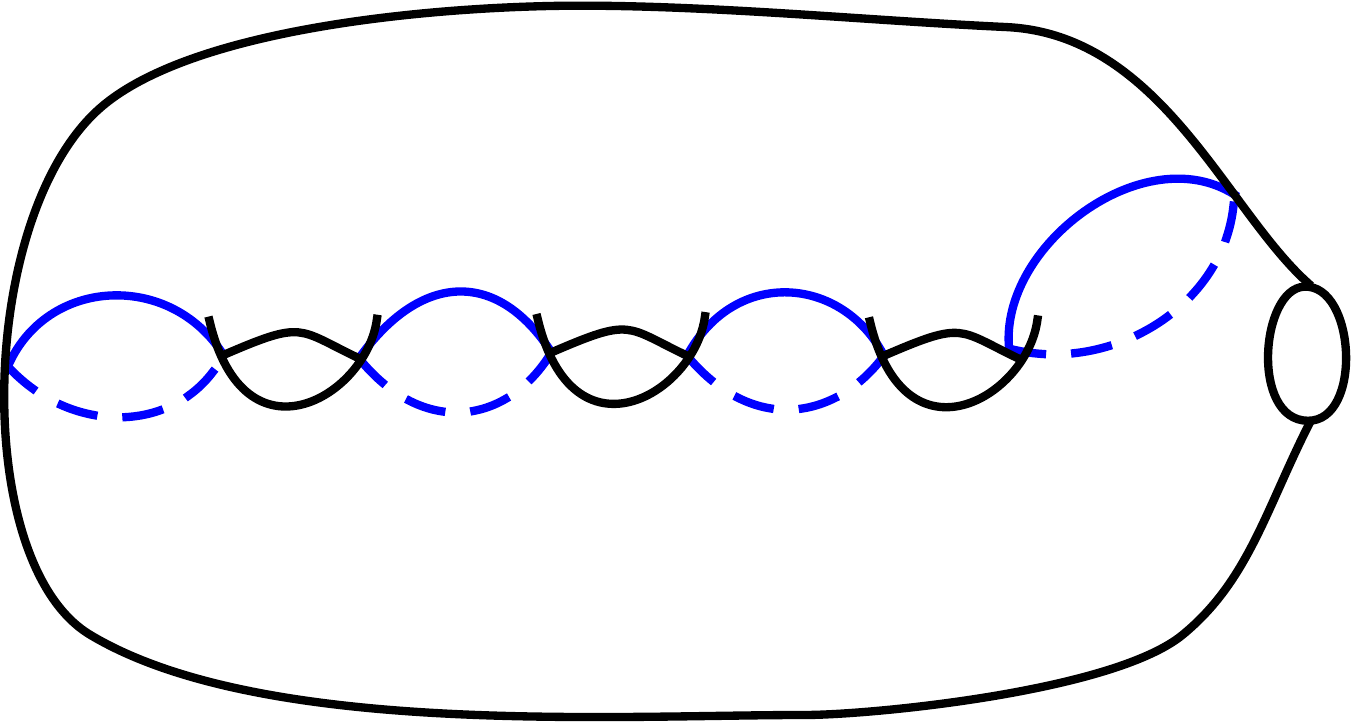}
\subcaption{}

\end{subfigure}
\caption{The two possible topological types of multicurves in $\DW$, in the genus 3 case.}
\label{figure: vertices of dw}
\end{figure}

\begin{lemma}\label{lem:order_zero_large_intersection}
If $m,n \in \DW$ are connected by an edge, then $P(m) \cap P(n)$ is equal to $P(m \cup n)$ and is an  infinite diameter subset of $\ksep(S)$.
\end{lemma}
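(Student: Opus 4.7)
The plan is to first observe that the set-theoretic identity $P(m) \cap P(n) = P(m \cup n)$ is immediate from the definition of $P(\cdot)$: a vertex $y \in \ksep(S)$ satisfies both $m \subseteq y$ and $n \subseteq y$ precisely when $m \cup n \subseteq y$. Since $m$ and $n$ are joined by an edge of $\DW$, one is obtained from the other by adding or deleting a single curve, so $m \cup n$ coincides with whichever of $m$ or $n$ is the larger multicurve. In particular $m \cup n \in \DW$, so its complement in $S$ contains a pair of disjoint witnesses for $\sep(S)$.

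For the infinite diameter claim, I would apply Corollary~\ref{corollary:product regions} to the multicurve $m \cup n$: writing $S \sminus (m \cup n) = Y_1 \sqcup \cdots \sqcup Y_r$, the corollary provides a quasi-isometry
\[ P(m \cup n) \simeq \prod_{i=1}^{r} \ksep_{\X_{Y_i}}(Y_i), \]
and its final clause asserts that each factor $\ksep_{\X_{Y_i}}(Y_i)$ has infinite diameter if and only if $Y_i$ is itself a witness for $\sep(S)$. Since $m \cup n \in \DW$, at least two of the components $Y_i$ are witnesses (namely the disjoint pair guaranteed by membership in $\DW$), so at least one factor has infinite diameter and hence so does the product. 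Pulling this back through the quasi-isometry, $P(m \cup n)$ is an infinite diameter subset of $\ksep(S)$.

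There is no serious obstacle here; the argument is essentially a direct application of the product region description. The only point requiring a moment's care is that $m \cup n$ need not itself be a vertex of $\ksep(S)$, since its complementary components are allowed to be (and in this case are) witnesses for $\sep(S)$. However, Corollary~\ref{corollary:product regions} is stated for an arbitrary multicurve on $S$, and $P(m \cup n)$ is nonempty because $m \cup n$ can always be extended by a pants decomposition of each complementary component to produce a vertex of $\ksep(S)$ containing $m \cup n$.
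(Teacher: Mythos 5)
Your set-theoretic identity $P(m)\cap P(n) = P(m\cup n)$ and the route via Corollary~\ref{corollary:product regions} both work, and your conclusion is correct, but there is a misstatement about $\DW$ that you should fix: it is \emph{not} true in general that ``at least two of the components $Y_i$ are witnesses.'' The definition of $\DW$ only requires that $S \sminus (m\cup n)$ \emph{contain} a pair of disjoint witnesses as subsurfaces; it does not say that these witnesses appear as \emph{components} of the complement. Concretely, start from a vertex of $\DW$ whose complement is $W_1\sqcup W_2\sqcup P$, where $W_1,W_2\cong S_{0,g+1}$ are disjoint witnesses and $P$ is a pair of pants containing $\partial S$, and delete one of the curves shared by $W_1$ and $W_2$. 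The resulting multicurve is still a vertex of $\DW$, but its complement has only two components: a single $S_{0,2g}$ containing both $W_1$ and $W_2$, and the non-witness $P$. Here exactly one component is a witness. The correct statement, and all that your argument actually needs, is that \emph{at least one} component of $S\sminus(m\cup n)$ is a witness: each of the guaranteed disjoint witnesses lies inside some component, and any connected, positive-complexity subsurface containing a witness is itself a witness, since it meets every separating curve. With that correction your argument is complete.

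For comparison, the paper's proof cites Lemma~\ref{lem:product_regions_are_thick}, which applies to multicurves of the precise form $\partial W\cup\partial Y$; since $m\cup n$ may be a proper submulticurve of such an object, that version tacitly uses the containment $P(\partial W\cup\partial Y)\subseteq P(m\cup n)$. Your direct appeal to Corollary~\ref{corollary:product regions} applied to $m\cup n$ sidesteps this step, so once the overstatement above is corrected your version is marginally more self-contained, though both arguments rest on the same product-region description.
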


\begin{proof}
That $P(m) \cap P(n) = P(m \cup n)$ follows from the definition of the product regions.
Without loss of generality, $m \subseteq n$, so $m \cup n=n$ and  $P(m \cup n) = P(n)$, which has infinite diameter by Lemma~\ref{lem:product_regions_are_thick}.
\end{proof}

In light of Lemma~\ref{lem:order_zero_large_intersection}, if $\Omega$ is a connected component of $\DW$, then the union of all product regions for multicurves in $\Omega$ will be a thick of order~1 subset of~$\ksep(S)$. 
We show that the connected components of $\DW$ (and hence the thick of order~1 subsets) are naturally encoded by the fiber of the map induced by capping the boundary component on~$S$. As an aid to the reader, we will always use Roman letters to denote multicurves on $S$ and Greek letters to denote multicurves on~$\Sigma$.

\begin{definition}[The capping  map]
Recall, $F \colon \C(S) \to \C(\Sigma)$ is the map induced by capping the boundary component of $S$ with a disk.  Let $\FO$ denote the set of multicurves $\mu$ on $\Sigma$ such that there exists $m \in \DW$ with $\mu = F(m)$. For a multicurve $\mu \in \FO$, we shall abuse notation and use $F^{-1}(\mu)$ to denote the full subgraph of $\DW$ spanned by the set $\{m \in \DW : F(m) = \mu\}$.
\end{definition}

\begin{proposition}
\label{prop:fibers_are_connected_components}
The map $\mu \mapsto F^{-1}(\mu)$ is a bijection from $\FO$ to the set of connected components of $\DW$.
\end{proposition}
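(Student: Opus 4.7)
The plan is to prove two statements whose combination yields the claimed bijection: (i) if $m, n \in \DW$ are joined by an edge, then $F(m) = F(n)$, so every connected component of $\DW$ lies inside a single fiber $F^{-1}(\mu)$; and (ii) each fiber $F^{-1}(\mu)$ is itself connected in $\DW$. Injectivity of the map $\mu \mapsto F^{-1}(\mu)$ is automatic, and surjectivity is an immediate consequence of (i) together with (ii).

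For (i), I would carry out a direct case analysis of edges in $\DW$. By Proposition~\ref{proposition: disjoint witnesses}(3), every vertex of $\DW$ has one of the two topological forms pictured in Figure~\ref{figure: vertices of dw}, which I will call type (a) and type (b). Using the same structural classification, every edge in $\DW$ joins a type (a) vertex $m$ to a type (b) vertex $n$, where $n$ is obtained from $m$ by adding a curve $c$ in the $S_{0,g+2}$ component of $S \sminus m$ that cuts off a pair of pants containing $\partial S$ together with some $c' \in m$; all other candidate moves (adding a curve inside a witness, or removing a shared boundary of $W_1$ and $W_2$) destroy the disjoint-witness property. In particular, $c$ cobounds a pair of pants containing $\partial S$ with the curve $c' \in m$; after capping, this pair of pants becomes an annulus on $\Sigma$, so $c$ and $c'$ are isotopic there and have the same image under $F$. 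Hence $F$ is constant on edges of $\DW$.

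For (ii), I first observe that every type (b) vertex is adjacent in $\DW$ to a type (a) vertex with the same $F$-image, obtained by deleting one of the two ``outer'' boundary curves $\gamma_{W_1}, \gamma_{W_2}$ of the pair of pants containing $\partial S$, so it suffices to connect any two type (a) vertices $m, m' \in F^{-1}(\mu)$ by a path inside $F^{-1}(\mu)$. Since $m$ and $m'$ agree as multicurves on $\Sigma$, the Birman exact sequence yields $m' = \phi(m)$ for some $\phi \in \Push(S_{g,1})$. The basic move in $\DW$ is a ``replacement'' of a curve $c_i \in m$: add a curve $c_i'$ cobounding a pair of pants with $c_i$ and $\partial S$ to pass through a type (b) vertex, then delete $c_i$ to arrive at a type (a) vertex in which $c_i$ has been replaced by $c_i'$; this composite realizes a specific point push of $c_i$ across $\partial S$. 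I would decompose $\phi$ into a product of such replacements by using that $\Push(S_{g,1}) \cong \pi_1(\Sigma)$ is generated by simple loops and inducting on the geometric intersection number between a representative of $\phi$ and $\mu$. The main obstacle is that each replacement modifies only a single curve of $m$, whereas a general element of $\Push(S_{g,1})$ may act non-trivially on several curves simultaneously; the inductive argument must therefore track carefully how the multicurve evolves through intermediate steps and ensure that every intermediate multicurve remains of type (a) and lies in $F^{-1}(\mu)$.
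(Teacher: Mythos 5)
Your part (i) is correct and is essentially the paper's Claim~\ref{claim:components_are_in_fiber}: an edge of $\DW$ adds or removes a curve that cobounds a pair of pants with $\partial S$ and another curve of the multicurve, so capping makes the two isotopic. The analysis of which moves preserve the disjoint-witness property is also right.

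Your part (ii), however, is a genuinely different strategy and is not complete. You propose to translate connectedness of the fiber into a group-theoretic statement via the Birman exact sequence, and then to decompose a point push into ``replacement'' moves in $\DW$. There are two gaps. First, the assertion that $F(m)=F(m')$ implies $m'=\phi(m)$ for some $\phi\in\Push(S_{g,1})$ does not follow directly from the Birman exact sequence: one must argue that the stabilizer of $\mu$ in $\mcg(\Sigma)$ is contained in the image of the stabilizer of $m$ in $\mcg(S)$, so that after composing with a lift one can arrange the capped map to be trivial. Second, and more significantly, you explicitly flag as ``the main obstacle'' the problem of decomposing a general $\phi\in\Push(S_{g,1})$ into replacement moves while keeping every intermediate multicurve of type~(a) inside $F^{-1}(\mu)$, and you do not resolve it. This is precisely where the real content of the claim lies; stating it as a remaining obstacle leaves the proof incomplete.

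The paper handles Claim~\ref{claim:fibers_are_connected} by a more elementary device that avoids both issues: given $m,n\in F^{-1}(\mu)$, it inducts directly on $i(m,n)$. When $i(m,n)=0$ it checks that $m\cup n$ is itself a vertex of $\DW$ adjacent to both (using that the curves of $m\cup n$ mapping to a single curve of $\mu$ cobound a pair of pants with $\partial S$). When $i(m,n)>0$ it observes that, since $F(m)=F(n)$, intersections of $m$ and $n$ come in pairs forming $1$-holed bigons around $\partial S$, and a surgery of the innermost such bigon produces $n'$ with $i(n,n')=0$, $i(m,n')\le i(m,n)-2$, and $F(n')=\mu$. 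This sidesteps any appeal to the Birman exact sequence or to generating sets for $\Push(S_{g,1})$. If you want to salvage your approach, you would need to prove the two missing points; the surgery argument is the shorter route.
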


\begin{proof} 
We first show that each connected component of $\DW$ is contained in the fiber of a single multicurve $\mu \in \FO$.

\begin{claim} \label{claim:components_are_in_fiber}
If $ m,n \in \DW$ are connected by an edge, then $F(m)$ is isotopic to $F(n)$ on~$\Sigma$.
\end{claim}

\begin{proof}
Without loss of generality, assume that $n$ is obtained from $m$ by adding a curve~$a$. From the classification of topological types of vertices of~$\DW$, there exists a curve $b$ in $m$ such that $a$ and $b$ bound a pair of pants with the boundary component of~$S$. Thus $F(a)$ and $F(b)$ are isotopic on $\Sigma$, which implies $F(m)$ is isotopic to $F(n)$ on~$\Sigma$.
\end{proof}

Inductively, Claim~\ref{claim:components_are_in_fiber} implies that every element of a connected component of $\DW$ has the same image under~$F$. The next claim establishes that the connected components are in one-to-one correspondence with the pre-images of the points in~$\FO$.

\begin{claim}\label{claim:fibers_are_connected}
For all $\mu \in \FO$, $F^{-1}(\mu) \subseteq \DW$ is connected.
\end{claim}

\begin{proof}
Let $m, n \in F^{-1}(\mu)$ with $m \neq n$.

First suppose $i(m,n) = 0$, and consider the multicurve $m \cup n$.
The multicurve $F(m)=F(n)=F(m \cup n)$ on $\Sigma$ contains $g+1$ curves. Therefore, there is a multicurve $m'$ on $S$ with exactly $g+1$ curves from $m \cup n$ so that $F(m')=F(m \cup n)$. Two disjoint, non-isotopic curves on $S$ have the same image under $F$ if and only if they cobound a pair of pants with $\partial S$. Thus, any curve in $(m \cup n) \sminus m'$ must cobound a pair of pants with $\partial S$ and a curve of~$m'$. Hence, there can be at most one curve of $m \cup n$ that is not in $m'$ (and since $m \neq n$, such a curve exists).
We therefore have that $S \sminus (m \cup n)$ contains two witnesses homeomorphic to $S_{0, g+1}$, and a copy of~$S_{0,3}$.
This implies $m$ and $n$ are connected in~$\DW$ as the multicurve $m \cup n$ is a vertex of $\DW$ that is either adjacent to or equal to each of $m$ and $n$.

Now suppose $i(m,n) = k$ and assume, by induction, that any two vertices of $ F^{-1}(\mu)$ with intersection number less than $k$ can be connected within~$F^{-1}(\mu)$.
Since $F(m) = F(n)$, any intersections between $m$ and $n$ come in pairs and form a 1\hyp{}holed bigon containing the boundary component of~$S$.
Let $a \in m$ and $b \in n$ be curves whose intersection forms the innermost bigon around the boundary component of ~$S$.
We perform the surgery shown in Figure~\ref{figure:surgery} across the boundary component of~$S$ to produce a curve $b'$ that is disjoint from~$n$.
Let $n'= (n \sminus b) \cup b'$. Then $i(m,n') \leq k-2$, $i(n,n') =0$, and $F(n') = F(n) = F(m)$. Thus, by induction, $m$ is connected to $n$ in~$F^{-1}(\mu)$.
\end{proof}

\begin{figure}[h!]
    \centering
    \def\svgscale{1} 
    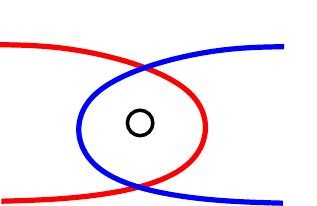
    \caption{Surgering $b$ to $b'$ across the boundary component of $S$. }
    \label{figure:surgery}
\end{figure}

Combining Claims \ref{claim:components_are_in_fiber} and \ref{claim:fibers_are_connected}, we have that $\mu \mapsto F^{-1}(\mu)$ is a bijection from $\FO$ to the set of connected components of $\DW$.
\end{proof}

We can now describe the thick of order~1 subsets we need to prove $\ksep(S)$ is thick of order at most~2. Each of these subsets is the union of the product regions of all multicurves in a connected component of $\DW$. By Proposition~\ref{prop:fibers_are_connected_components}, these subsets are in correspondence with elements of $\FO$. This correspondence will be critical in thickly chaining any two of these subsets together.

\begin{definition}[Thick of order 1 pieces]
For each $\mu \in \FO$, define \[\mc{X}(\mu) = \displaystyle \bigcup \limits_{m \in F^{-1}(\mu)} P(m).\]
\end{definition}

\begin{lemma}\label{lem:thick_of_order_1_pieces}
For each $\mu \in \FO$, $\mc{X}(\mu) \subseteq \ksep(S)$ is thick of order at most~1. 
\end{lemma}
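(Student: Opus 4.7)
The plan is to verify the three conditions in the definition of thickness of order at most~$1$ using the product regions $P(m)$ for $m \in F^{-1}(\mu)$ as the thick of order~$0$ pieces. By Lemma~\ref{lem:product_regions_are_thick}, each such $P(m)$ is thick of order~$0$, since every $m \in \DW$ has the property that $S \sminus m$ contains a pair of disjoint witnesses $W,Y$ for $\sep(S)$, and up to enlarging $m$ to $\partial W \cup \partial Y$ (which only enlarges $P(m)$ to a sub-region and is handled by combining with the chaining step), the hypothesis of Lemma~\ref{lem:product_regions_are_thick} applies. More directly, every vertex of $\DW$ contains the boundary multicurve of a pair of disjoint witnesses, so $P(m)$ contains a product region of the form in Lemma~\ref{lem:product_regions_are_thick}, and is itself quasi-isometric to a product of two infinite diameter factors by Corollary~\ref{corollary:product regions}.

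Coarse covering is immediate from the definition: $\mc{X}(\mu) = \bigcup_{m \in F^{-1}(\mu)} P(m)$, so $\mc{X}(\mu)$ is the union (and thus in the $0$-neighborhood) of the pieces.

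For the thick chaining condition, I would use the key structural result Proposition~\ref{prop:fibers_are_connected_components}, which tells us that $F^{-1}(\mu)$ is a connected subgraph of $\DW$. Thus for any $m, n \in F^{-1}(\mu)$, there is an edge path $m = m_0, m_1, \dots, m_k = n$ in $F^{-1}(\mu) \subseteq \DW$. For each consecutive pair, Lemma~\ref{lem:order_zero_large_intersection} tells us that $P(m_i) \cap P(m_{i+1}) = P(m_i \cup m_{i+1})$ is infinite diameter. In particular, $\mc{N}_C\bigl(P(m_i)\bigr) \cap \mc{N}_C\bigl(P(m_{i+1})\bigr)$ contains this infinite diameter intersection for any $C \geq 0$, verifying the thick chaining condition with a uniform constant (say $C = 0$). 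This gives the required chain of thick of order~$0$ pieces linking any two.

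There is no substantive obstacle here: the work has already been done in building $\DW$ so that its edges correspond exactly to coarse intersections of product regions (Lemma~\ref{lem:order_zero_large_intersection}) and in identifying its connected components with fibers of $F$ (Proposition~\ref{prop:fibers_are_connected_components}). The proof is essentially the bookkeeping of assembling these facts into the definition of thickness, and is short.
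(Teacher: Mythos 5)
Your proof follows the same route as the paper: cover $\mathcal{X}(\mu)$ by the product regions $P(m)$ for $m\in F^{-1}(\mu)$, use Proposition~\ref{prop:fibers_are_connected_components} to get a connected component of $\DW$, and use Lemma~\ref{lem:order_zero_large_intersection} for the infinite-diameter intersections along an edge path. One small correction to your first paragraph: every vertex $m$ of $\DW$ is \emph{exactly} of the form $\partial W \cup \partial Y$ for a pair of disjoint witnesses $W, Y$ (this is what the paper means by the two topological types of Figure~\ref{figure: vertices of dw}, coming from Proposition~\ref{proposition: disjoint witnesses}(3)), so Lemma~\ref{lem:product_regions_are_thick} applies directly and the ``up to enlarging $m$'' maneuver is both unnecessary and reversed --- enlarging the multicurve $m$ \emph{shrinks} $P(m)$, not the other way around, and a thick sub-region would not by itself make the ambient region thick. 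Your second, ``more direct'' argument is the right one and matches the paper's.
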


\begin{proof}
Let $\mu \in \FO$. By construction, $\mc{X}(\mu)$ is covered by the thick of order~0 subsets $\mc{P}=\{P(m) : m \in F^{-1}(\mu)\}$. By Proposition~\ref{prop:fibers_are_connected_components}, the fiber $F^{-1}(\mu)$ is a connected component of $\DW$. If $m,m' \in \DW$ are joined by an edge, then $P(m) \cap P(m') = P(m \cup m')$  and is infinite diameter by Lemma~\ref{lem:order_zero_large_intersection}. So given any two multicurves $m, m' \in F^{-1}(\mu)$, the path from $m$ to $m'$ in $\DW$ gives a thick chain of elements of $\mc{P}$ connecting $P(m)$ to~$P(m')$.
\end{proof}

The next task is to be able to thickly chain any two of the $\mc{X}(\mu)$'s together. As is the case with product regions, we can encode the thick intersection of the $\mc{X}(\mu)$'s using a graph. In this case, the graph has vertex set $\FO$ with an edge between two vertices if they intersect a bounded number of times on~$\Sigma$.

\begin{definition}
Define $\F$ to be the graph with vertex set $\FO$ where two vertices $\mu$ and $\nu$  are joined by an edge if $i(\mu, \nu) \leq 4$. 
\end{definition}

\begin{proposition}\label{prop:intersection_thick_of_order_2}
There exists $C \geq 0$, so that if $\mu,\mu' \in \F$ are joined by an edge, then  $\mc{N}_C\bigl( \mc{X}(\mu) \bigr) \cap \mc{N}_C\bigl(\mc{X}(\mu')\bigr)$ has infinite diameter.
\end{proposition}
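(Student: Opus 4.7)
The strategy is to find a pseudo-Anosov element in the point-pushing subgroup $\Push(S)$ whose orbit provides an infinite-diameter subset of $\mc{X}(\mu)$ lying within bounded distance of $\mc{X}(\mu')$. Point-pushing is the right tool because every $\phi \in \Push(S)$ satisfies $F(\phi(m)) = F(m)$, so the $\Push(S)$-orbit of any $m \in F^{-1}(\mu)$ stays inside $F^{-1}(\mu) \subseteq \mc{X}(\mu)$.

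My first step is to lift $\mu, \mu' \in \FO$ to multicurves $m, m' \in \DW$ at bounded distance in $\ksep(S)$. Choose representatives of $\mu$ and $\mu'$ on $\Sigma$ in minimal position that avoid a small disk $D \subset \Sigma$ around the capped point; this is always possible since curves can be isotoped away from any given point without changing the intersection number. Viewing these representatives on $S = \Sigma \sminus \mathrm{int}(D)$ produces multicurves $m, m'$ with $F(m) = \mu$, $F(m') = \mu'$, and $i_S(m, m') \leq i_\Sigma(\mu, \mu') \leq 4$. Proposition~\ref{proposition: disjoint witnesses}(3) implies that every element of $\FO$ separates $\Sigma$ into two copies of $S_{0,g+1}$, so $S \sminus m$ consists of one $S_{0,g+1}$ and one $S_{0,g+2}$ subsurface (the latter containing $\partial S$), both witnesses for $\sep(S)$ by Proposition~\ref{proposition: sep holes}. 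Hence $m \in \DW$, and likewise $m' \in \DW$. The standard bound on subsurface projection in terms of intersection number gives $d_W(m, m') \leq K$ uniformly for every witness $W$, so the distance formula (Theorem~\ref{theorem: distance formula}) yields a uniform bound $d_{\ksep(S)}(m, m') \leq D$ independent of $\mu$ and $\mu'$.

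Next, I invoke the existence of a pseudo-Anosov $\phi \in \Push(S)$, obtained by point-pushing along any filling loop in $\Sigma$ (which exists since $g \geq 3$). Such a $\phi$ acts loxodromically on $\C(S)$; since $S$ itself is a witness for $\sep(S)$, Corollary~\ref{corollary:pA_are_undistorted} gives that $n \mapsto \phi^n(m)$ is a quasi-isometric embedding of $\mathbb{Z}$ into $\ksep(S)$, and in particular has infinite diameter. Because $\phi \in \Push(S)$ and $\phi$ is a homeomorphism preserving the topological description of $\DW$, we have $\phi^n(m) \in F^{-1}(\mu)$, and so $\phi^n(m) \in P(\phi^n(m)) \subseteq \mc{X}(\mu)$; similarly $\phi^n(m') \in \mc{X}(\mu')$. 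Since $\phi$ acts by isometries on $\ksep(S)$, $d(\phi^n(m), \phi^n(m')) = d(m, m') \leq D$, so the infinite-diameter orbit $\{\phi^n(m) : n \in \mathbb{Z}\}$ lies in $\mc{X}(\mu) \cap \mc{N}_D(\mc{X}(\mu'))$. Taking $C = D$ completes the proof.

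The principal obstacle is the lifting step: establishing that the naive lift of $\mu$ from $\Sigma$ to $S$ automatically lies in $\DW$. This depends crucially on the explicit classification of disjoint witnesses in Proposition~\ref{proposition: disjoint witnesses}(3), which characterizes $\FO$ as exactly the multicurves on $\Sigma$ that split $\Sigma$ into two $S_{0,g+1}$ subsurfaces.
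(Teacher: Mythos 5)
The overall strategy you propose—use a pseudo-Anosov $\phi$ in $\Push(S)$ and exploit the fact that point-pushing preserves the fibers $F^{-1}(\mu)$—is the same key idea the paper uses, and the first step (lifting $\mu,\mu'$ to $m,m'\in\DW$ with $i(m,m')\le 4$) is also correct. But there is a genuine gap in the middle of your argument: you write $d_{\ksep(S)}(m,m')\le D$ and $\phi^n(m)\in P(\phi^n(m))\subseteq\mc{X}(\mu)$, treating $m$ and $m'$ as if they were vertices of $\ksep(S)$. They are not. By definition, a vertex of $\ksep(S)=\ksep_{\mf{X}}(S)$ is a multicurve $x$ such that no component of $S\sminus x$ is a witness, whereas $m\in\DW$ precisely means $S\sminus m$ \emph{does} contain witness components. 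In particular $m\notin P(m)$, the distance formula (Theorem~\ref{theorem: distance formula}) does not apply to $m,m'$ since they are not in $\G(S)$, and Corollary~\ref{corollary:pA_are_undistorted} cannot be invoked at the point $m$.

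What is missing is exactly the technical heart of the paper's proof: one must \emph{extend} $m$ and $m'$ to actual vertices $x\in P(m)$ and $x'\in P(m')$ of $\ksep(S)$ by adding admissible curves in the witness components of $S\sminus m$ and $S\sminus m'$, and one must do so with quantitative control. If the added curves are chosen arbitrarily, then for a witness component $W$ of $S\sminus m$ the projection $\pi_W(x)$ (which sees the added curve) can be arbitrarily far from $\pi_W(x')$, so $d_{\ksep(S)}(x,x')$ is uncontrolled. The paper handles this via Claim~\ref{claim:extending DW}, producing admissible curves in each witness component of $S\sminus m$ that intersect $m'$ at most twice, yielding $x\in P(m)$ with $i(x,m')\le 8$ and then $x'\in P(m')$ with $i(x,x')\le 20$; only then does the finiteness of $\mcg(S)$-orbits of bounded-intersection pairs give a uniform bound $d_{\ksep(S)}(x,x')\le C$. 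Your proof skips this entirely, and it is not a cosmetic omission: without the coordinated extension, the claimed bound on distance in $\ksep(S)$ fails. Once $x,x'$ are in place, the point-pushing argument you describe goes through exactly as in the paper (orbiting $x$, not $m$).
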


\begin{proof}

Let $\mu$ and $\mu'$ be two adjacent vertices of $\F$.
Since $i(\mu,\mu') \le 4$, we can choose $m \in F^{-1}(\mu)$ and $m' \in F^{-1}(\mu')$ so that $i(m,m') \le 4$.
Given a subsurface $Y$ of~$S$, we will call a curve $\alpha$ of~$Y$ \emph{admissible} if $\alpha$ is essential and non\hyp{}peripheral in~$Y$ and no component of $Y \sminus \alpha$ is a 3\hyp{}holed sphere containing~$\partial S$.
We can extend any vertex $v$ of  $\DW$  to an element $z$ of $P(v) \subseteq \ksep(S)$  by adding an admissible curve in each witness component of $S \sminus v$.
We claim that we can extend $m$ and $m'$ to $x \in P(m)$ and $x' \in P(m')$ respectively, so that $i(x, x') \le 20$.

Let $Y$ be a component of $S \sminus m$ that is not a $3$\hyp{}holed sphere. By Proposition \ref{proposition: sep holes}, $Y$ is a sphere with either $g+1$ or $g+2$ boundary components.

\begin{claim}\label{claim:extending DW}
$Y$ contains an admissible curve that intersects $m' \cap Y$ at most twice.
\end{claim}

\begin{proof}
Note, a curve $\alpha$ on $Y$ is admissible if and only if each component of $Y \sminus \alpha$ contains at least two boundary components of $\partial_S Y$.

The intersection of $m'$ with $Y$ is a collection of at most two disjoint arcs and possibly some curves. We may assume there are two arcs, since having fewer can only decrease the number of intersections in what follows. Let $a_1$ and $a_2$ be the arcs of $m' \cap Y$. If either $a_1$ or $a_2$ has endpoints on two different components of $\partial_S Y$,  then the standard surgery shown in Figure~\ref{figure: standardsurgery} will yield an admissible curve on $Y$ that intersects $m'$ at most twice. Assume hence that each of $a_1$ and $a_2$ has both endpoints on a single curve. 

\begin{figure}[h!]
\centering
\begin{subfigure}[b]{0.3\textwidth}
\centering
\def\svgscale{.5}
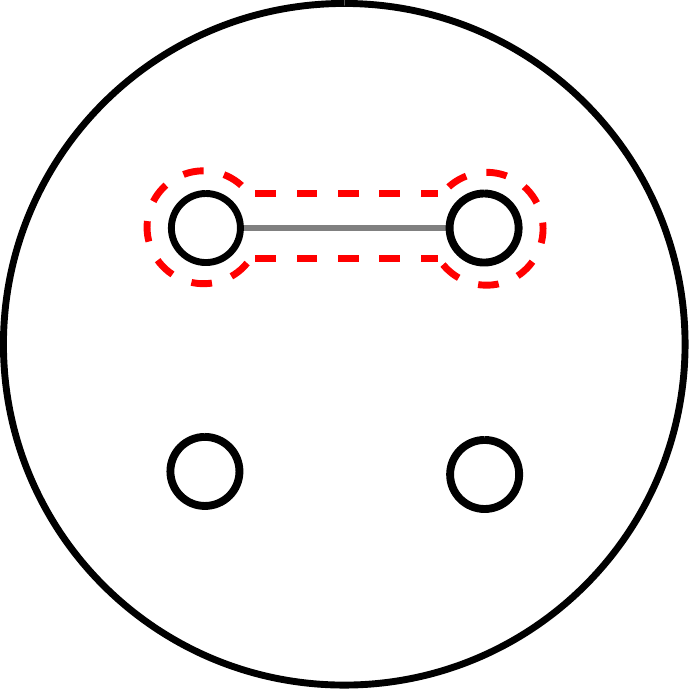
\subcaption{}
\label{figure: standardsurgery}
\end{subfigure}
\enspace
\begin{subfigure}[b]{0.3\textwidth}
\centering
\def\svgscale{.5}
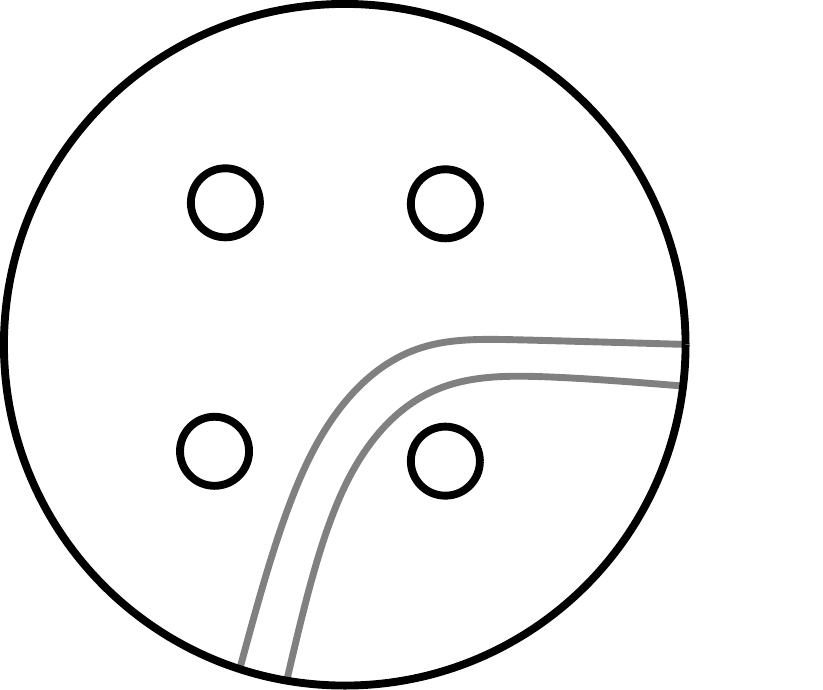
\subcaption{}
\label{figure: arcscase1}
\end{subfigure}
\enspace
\begin{subfigure}[b]{0.3\textwidth}
\centering
\def\svgscale{.5}
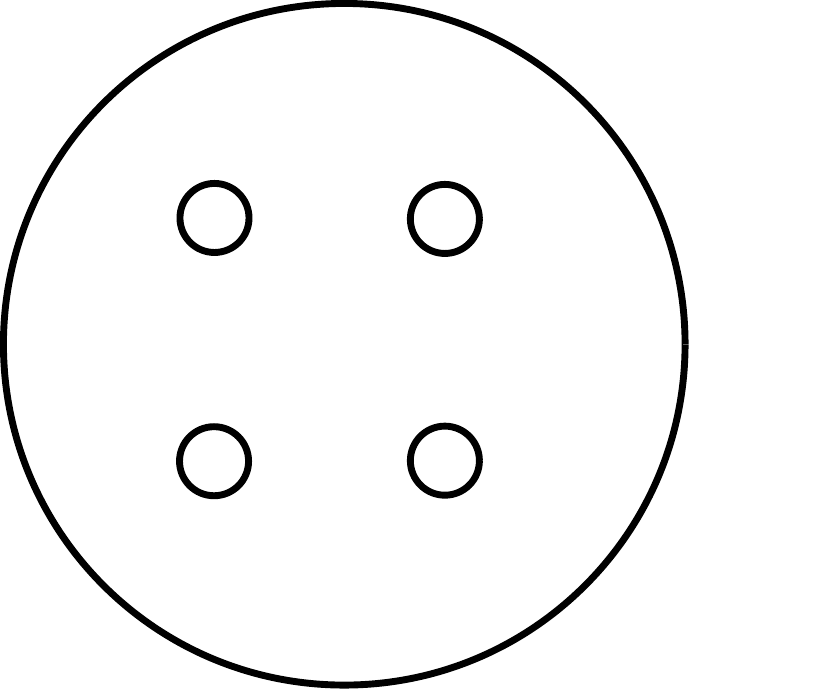
\subcaption{}
\label{figure: arcscase2}
\end{subfigure}
\caption{The subsurface $Y$ in  Claim~\ref{claim:extending DW}, with the surgeries used in the proof.}
\label{figure: arcsurgeries}
\end{figure}

First, assume $Y$ contains the boundary of $S$ and both $a_1$ and $a_2$ separate a 1\hyp{}holed disk containing~$\partial S$ from~$Y$. Since $a_1$ and $a_2$ are disjoint, this implies $a_1$ and $a_2$ are in fact isotopic, with endpoints on a curve $\gamma$ in $\partial_S Y$ (see Figure~\ref{figure: arcscase1}).
Let $D$ be the component of $Y \sminus (a_1 \cup a_2)$ that contains all components of $\partial_S Y \sminus \gamma$.
Since $\partial_S Y$ consists of $g+1\ge4$ curves, there is a curve $\alpha$ on $D$ that separates at least two components of $\partial_S Y$ on each side.  Any such curve is therefore admissible and disjoint from $a_1$ and $a_2$. Moreover, we can choose $\alpha$ to also be disjoint from any curves in~$m' \cap Y$.

Now suppose, without loss of generality, no component of $Y \sminus a_1$ is a 1\hyp{}holed disk containing $\partial S$. There are two standard surgeries of $a_1$ to a non\hyp{}peripheral curve in~$Y$; see Figure~\ref{figure: arcscase2}. Since $\partial_S Y$ has $g+1\geq 4$ components, at least one of these standard surgeries gives an admissible curve of~$Y$. Since $m' \cap Y$ is a disjoint collection of two arcs and some curves, this essential curve intersects $m'$ at most twice.
\end{proof}

Using Claim~\ref{claim:extending DW} on each witness component of $S \sminus m$, we find a vertex $x$ in $P(m)$ with $i(x,m') \leq 8 = 4+2\cdot2$. 
Since the intersection of $x$ with each component of $S \sminus m'$ is a disjoint collection of at most four arcs and possibly some curves, we can repeat the argument of Claim~\ref{claim:extending DW} to show that each witness component of $S \sminus m'$ contains an admissible curve that intersects $x$ at most $6$ times. Thus, there exists a vertex $x'$ in $P(m')$ with $i(x,x') \leq 20 = 8+2\cdot 6$.

Up to the action of $\mcg(S)$, there are only finitely many pairs of vertices of $\ksep(S)$ that intersect at most $20$ times.
Hence, there exists an upper bound $C$, depending only on the genus of $S$, on the distance between such a pair of points. In particular, $d_{\ksep(S)}(x, x') \le C$.
Since $x \in \mathcal{X}(\mu)$ and $x' \in \mathcal{X}(\mu')$, we have that $x$ is contained in $\mathcal{N}_C(\mathcal{X}(\mu)) \cap \mathcal{N}_C(\mathcal{X}(\mu'))$.

To show that $\mathcal{N}_C(\mathcal{X}(\mu)) \cap \mathcal{N}_C(\mathcal{X}(\mu'))$ has infinite diameter, we use the fact that $\Push(S) < \mcg(S)$ contains a pseudo-Anosov element $\phi$ \cite[Theorem~2']{kra}.
Since $\phi$ is a point push, $F(\phi(m)) = F(m)$. Thus, $\phi$ preserves $F^{-1}(\mu)$ and hence preserves~$\mathcal{X}(\mu)$.
Similarly, $\phi$ preserves $\mathcal{X}(\mu')$.
Since the action of $\mcg(S)$ on $\ksep(S)$ is isometric, $\phi$ preserves $\mathcal{N}_C(\mathcal{X}(\mu))$ and $\mathcal{N}_C(\mathcal{X}(\mu'))$, and $\phi^n(x) \in \mathcal{N}_C(\mathcal{X}(\mu)) \cap \mathcal{N}_C(\mathcal{X}(\mu'))$ for all $n \in \mathbb{Z}$.
By Corollary~\ref{corollary:pA_are_undistorted}, the orbit $\langle \phi \rangle \cdot x$ quasi\hyp{}isometrically embeds in $\ksep(S)$ and $\mathcal{N}_C(\mathcal{X}(\mu)) \cap \mathcal{N}_C(\mathcal{X}(\mu'))$ has infinite diameter.
\end{proof}

\begin{lemma}\label{lem:DW0_is_connected}
The graph $\F$ is connected for all $g \geq 3$.
\end{lemma}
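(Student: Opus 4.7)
The natural approach is to apply Putman's Lemma (Lemma~\ref{lem:putman_trick}) to the action of $\mcg(\Sigma)$ on $\F$. This action is by simplicial automorphisms, since the defining condition for $\FO$---consisting of $g+1$ pairwise disjoint, pairwise non-isotopic curves whose complement is $S_{0,g+1} \sqcup S_{0,g+1}$---is a topological invariant, and because $\mcg(\Sigma)$ preserves geometric intersection number. By the change-of-coordinates principle, any two elements of $\FO$ have the same topological type, so $\mcg(\Sigma)$ acts transitively on $\FO$. Therefore, for any fixed base vertex $\mu_0 \in \FO$, the orbit $\mcg(\Sigma) \cdot \mu_0$ equals $\FO$ and so trivially intersects every component of $\F$, verifying the first hypothesis of Lemma~\ref{lem:putman_trick}.

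To verify the second hypothesis I would choose $\mu_0$ symmetrically: realize $\Sigma$ in $\mathbb{R}^3$ with a reflective symmetry whose fixed set consists of $g+1$ horizontal circles, and take $\mu_0$ to be this fixed set, separating $\Sigma$ into a top and a bottom copy of $S_{0,g+1}$. For the symmetric generating set $X$ of $\mcg(\Sigma)$ I would take the Humphries Dehn twists, arranged so that as many as possible are twists about curves contained in a single half of $\Sigma \sminus \mu_0$; any such generator is supported off $\mu_0$, so fixes it, and there is nothing to check.

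For the remaining generators $\phi = T_c$, where $c$ crosses $\mu_0$, I would construct an explicit sequence $\mu_0 = \nu_0, \nu_1, \dots, \nu_k = \phi(\mu_0)$ in $\FO$ with $i(\nu_j, \nu_{j+1}) \le 4$ for each~$j$. The flexibility to do so comes from single-curve \emph{flips}: given $\nu \in \FO$ and any $\alpha \in \nu$, the complement $\Sigma \sminus (\nu \sminus \alpha)$ is obtained by regluing two copies of $S_{0,g+1}$ along one boundary circle each and is therefore homeomorphic to $S_{0,2g}$, so any separating simple closed curve $\alpha' \subset S_{0,2g}$ that splits the $2g$ boundary components evenly produces a new element $(\nu \sminus \alpha) \cup \{\alpha'\} \in \FO$. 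Using the reflective symmetry of $\mu_0$ and the local nature of each Humphries generator, one can interpolate between $\mu_0$ and $T_c(\mu_0)$ through such flips, keeping the intersection with the previous multicurve bounded by~$4$ at each step.

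The main obstacle is this final explicit construction: for each non-trivial generator~$\phi$ one must exhibit specific intermediate multicurves~$\nu_j$ and verify both that each lies in $\FO$ and that $i(\nu_j, \nu_{j+1}) \le 4$. Because only finitely many generators require attention and each acts in a localized region of $\Sigma$, this should reduce to a finite case analysis made manageable by the reflective symmetry of $\mu_0$. Although I expect it to be short, the bookkeeping tracking which pair of boundary components of $S_{0,2g}$ each flip separates will be the delicate part of the argument.
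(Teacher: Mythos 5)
Your overall strategy---Putman's Lemma (Lemma~\ref{lem:putman_trick}) applied to the $\mcg(\Sigma)$-action on $\F$, together with the transitivity coming from change-of-coordinates---is exactly the paper's approach, and your description of the vertex set $\FO$ and of the reflection-symmetric base multicurve $\mu_0$ is correct. However, you leave the second hypothesis of Putman's Lemma unverified: you acknowledge that constructing the interpolating sequence $\mu_0=\nu_0,\dots,\nu_k=\phi(\mu_0)$ with $i(\nu_j,\nu_{j+1})\le 4$ via flip moves is ``the main obstacle'' and ``delicate,'' and you do not carry it out. As written, this is a genuine gap, not just bookkeeping deferred.

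The gap is unnecessary. With your own choice of $\mu_0$ (and the standard Humphries arrangement), each generating twist curve $c$ either lies in the fixed plane, so $T_c(\mu_0)=\mu_0$, or crosses the plane of symmetry exactly twice, so $i(c,\mu_0)\le 2$. The standard Dehn twist inequality then gives $i\bigl(T_c(\mu_0),\mu_0\bigr)\le i(c,\mu_0)^2\le 4$, which means $\mu_0$ and $T_c(\mu_0)$ are already joined by a single edge of $\F$. There is no need for intermediate flips at all; the whole of the second hypothesis reduces to this one intersection-number estimate. This is precisely what the paper's proof does (with a base vertex chosen so that each Humphries curve meets it at most twice). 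Your flip-move machinery is sound in itself, but it would also require you to control $i(\alpha,\alpha')\le 4$ at each flip, adding difficulty to an argument that can be avoided entirely. Replace the proposed interpolation with the direct intersection bound and your proof is complete and essentially identical to the paper's.
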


\begin{proof}
We use Putman's trick (Lemma \ref{lem:putman_trick}) to establish connectedness. As the generating set for $\mcg(\Sigma)$, select the  Humphries generators and their inverses, that is, the collection of left and right Dehn twists around each of the curves  shown in gray in Figure \ref{fig:humphries}.

\begin{figure}[h!]
    \centering
    \def\svgscale{.5}
    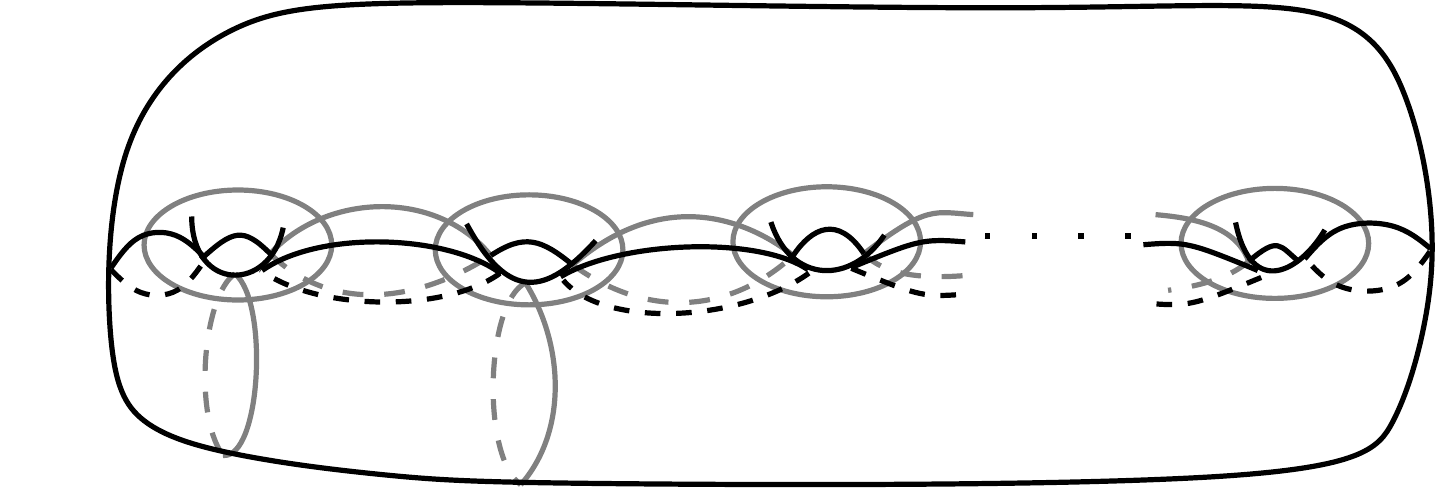
    \caption{Twists generating $\mcg(\Sigma)$ (gray) and the multicurve $\mu$ (black)  }
    \label{fig:humphries}
\end{figure}

Fix a base vertex $\mu$ intersecting the curves of the generating twists as shown in Figure \ref{fig:humphries}.
The action of $\mcg(\Sigma)$ on~$\F$ has only one orbit of vertices, so  the first condition of Lemma~\ref{lem:putman_trick} is satisfied. For the second condition, observe that the image of $\mu$ under every generator intersects $\mu$ at most four times, and hence is connected to $\mu$ by an edge in $\F$.
\end{proof}

We now conclude our proof that $\sep(S)$ is thick of order at most 2.

\begin{proof}[Proof of Theorem \ref{theorem: sep is thick}]
Recall,  it suffices to show $\ksep(S)$ is thick of order at most 2, as the order of thickness is a quasi\hyp{}isometry invariant.

By Lemma~\ref{lem:thick_of_order_1_pieces}, each $\mc{X}(\mu)$ is thick of order~1 for all $\mu \in \FO$. Since there are only finitely many {$\mcg(S)$\hyp{}orbits} of vertices of $\ksep(S)$, there exists a bound $D$ on the diameter of the quotient of $\ksep(S)$ by the action of $\mcg(S)$. 
Using the isometric action of the mapping class group, for any vertex  $x \in \ksep(S)$ there is $\mu \in \FO$ so that $x$ is within distance~$D$ of a vertex of $\mc{X}(\mu)$.

What remains to be shown is that any two of the $\mc{X}(\mu)$'s can be thickly chained together. By Lemma~\ref{lem:DW0_is_connected}, any two vertices $\mu, \mu'\in \F$ are connected by a path $\mu=\mu_0,\mu_1,\dots,\mu_k = \mu'$ in $\F$. By Proposition~\ref{prop:intersection_thick_of_order_2}, there exists $C \geq 0$ independent of $\mu$ and $\mu'$, so that $\mathcal{N}_C(\mathcal{X}(\mu_i)) \cap \mathcal{N}_C(\mathcal{X}(\mu_{i+1}))$ has infinite diameter for all $0\leq i \leq k-1$. Thus, $\ksep(S)$ is thick of order at most 2.
\end{proof}

\bibliography{Bibliography}{}
\bibliographystyle{alpha}

\end{document}